\newcommand{\tz}[2]{#2}
\def\clap#1{\hbox to 0pt{\hss#1\hss}}
\def\mathclap{\mathpalette\mathclapinternal}
\def\mathclapinternal#1#2{\clap{$\mathsurround=0pt#1{#2}$}}
\newcommand{\sref}[1]{\hyperref[#1]{\S~\ref*{#1}}}
\newcommand{\aref}[1]{\hyperref[#1]{Appendix~\ref*{#1}}}
\newcommand{\lref}[1]{\hyperref[#1]{Lemma~\ref*{#1}}}
\newcommand{\tref}[1]{\hyperref[#1]{Theorem~\ref*{#1}}}
\newcommand{\cref}[1]{\hyperref[#1]{Corollary~\ref*{#1}}}
\newcommand{\fref}[1]{\hyperref[#1]{Figure~\ref*{#1}}}
\newcommand{\frefs}[2]{\hyperref[#1]{Figure~\ref*{#1}#2}}
\newcommand{\pref}[1]{\hyperref[#1]{Proposition~\ref*{#1}}}
\newcommand{\tblref}[1]{\hyperref[#1]{Table~\ref*{#1}}}
\newcommand{\MRhref}[2]{\href{http://www.ams.org/mathscinet-getitem?mr=#1}{MR#2}}
\def\@strippedMR{}
\def\@scanforMR#1#2#3\endscan{%
  \ifx#1M\ifx#2R\def\@strippedMR{#3}%
  \else\def\@strippedMR{#1#2#3}%
  \fi\fi}
\def\@rst #1 #2other{#1}
\newcommand\MR[1]{\relax\ifhmode\unskip\spacefactor3000 \space\fi
  \MRhref{\expandafter\@rst #1 other}{#1}}
\newcommand\MRs[1]{\relax\ifhmode\unskip\spacefactor3000 \space\fi
  \@scanforMR#1\endscan
  \MRhref{\@strippedMR}{\@strippedMR}}
\newcommand{\old}[1]{}
\newtheorem{theorem}{Theorem}[section]
\newtheorem{proposition}[theorem]{Proposition}
\newtheorem{lemma}[theorem]{Lemma}
\newtheorem{conjecture}[theorem]{Conjecture}
\renewcommand{\th}{${}^{\text{th}}$ }
\newcommand{\U}{\mathrm{U}}
\newcommand{\rD}{\mathrm{D}}
\newcommand{\op}{\texttt{(}}
\newcommand{\cp}{\texttt{)}}
\newcommand{\eq}{\begin{equation}}
\newcommand{\art}{\operatorname{art}}
\newcommand{\des}{\operatorname{des}}
\newcommand{\dis}{\operatorname{dis}}
\newcommand{\sh}{\operatorname{sh}}
\newcommand{\DTr}{\operatorname{DTR}}
\newcommand{\DTs}{\operatorname{DTS}}
\newcommand{\match}{\operatorname{match}}
\newcommand{\matchmin}{\operatorname{min-word}}
\newcommand\D{\mathcal{D}}
\newcommand\zigzag{\mathrm{zigzag}}
\newcommand\qint[1]{\left[ #1\right]_q}
\newcommand\LL{\mathscr{L}}
\newcommand\tiles{\operatorname{tiles}}
\newcommand\area{\operatorname{area}}
\newcommand\sgrow{\operatorname{strip-grow}}
\newcommand\rgrow{\operatorname{ribbon-grow}}
\newcommand\mad{\operatorname{mad}}
\newcommand\inv{\operatorname{inv}}
\newcommand\INV{\operatorname{INV}}
\newcommand\desdif{\operatorname{desdif}}
\newcommand\res{\operatorname{res}}
\newcommand\REM{\operatorname{REM}}
\newcommand\DES{\operatorname{DES}}
\def\P{P_\lambda}
\def\L{\mathscr{L}(\P)}
\def\Z{\mathbb{Z}}
\begin{document}

\tz{}{
\tikzstyle{w}=[label=right:$\textcolor{red}{\cdots}$]
\tikzstyle{b}=[label=right:$\cdot\,\textcolor{red}{\cdot}\,\cdot$]
\tikzstyle{bb}=[circle,draw=black!90,fill=black!100,thick,inner sep=1pt,minimum width=3pt]
\tikzstyle{bb2}=[circle,draw=black!90,fill=black!100,thick,inner sep=1pt,minimum width=2pt]
\tikzstyle{b2}=[label=right:$\cdots$]
\tikzstyle{w2}=[]
\tikzstyle{vw}=[label=above:$\textcolor{red}{\vdots}$]
\tikzstyle{vb}=[label=above:$\vdots$]

\tikzstyle{level 1}=[level distance=3.5cm, sibling distance=3.5cm]
\tikzstyle{level 2}=[level distance=3.5cm, sibling distance=2cm]

\tikzstyle{bag} = [text width=4em, text centered]
\tikzstyle{end} = [circle, minimum width=3pt,fill, inner sep=0pt]


\newcommand\DyckPath[2][]{
  \def\xCoord{0} \def\yCoord{0}
  \def\Path{[#1](0,0)}
  \foreach \num [count=\n from 1] in {#2}
   {
    \ifodd\n
     \global\edef\xCoord{\xCoord + \num}
    \else
     \global\edef\yCoord{\yCoord + \num}
    \fi
    \global\edef\Path{\Path -- (\xCoord,\yCoord)}
   }
   \expandafter\path\Path;
 }
 \newcommand\DyckDot[2][0.125]{
  \fill [shift = {(0.5, -0.5)}] (#2) circle [radius = #1];
 }
 \newcommand\DyckShade[2][gray]{
  \draw[fill = #1] (#2) rectangle +(1,-1);
 }
 \newcommand\DyckDiag[2]{
  \begin{scope}[shift = {(#2)}]
  \foreach \n in {1,...,#1} {\DyckShade{\n - 1, \n-1}}
  \end{scope}
 }

  \newcommand\DyckTile[2][]{
  \def\xCoord{0} \def\yCoord{0}
  \def\Path{[#1](0,0)}
  \foreach \num [count=\n from 1] in {#2}
   {
    \ifodd\n
     \global\edef\xCoord{\xCoord + \num}
    \else
     \global\edef\yCoord{\yCoord + \num}
    \fi
    \global\edef\Path{\Path -- (\xCoord,\yCoord)}
   }
   \expandafter\path\Path;
 }

 \newcommand\Ribbon[2][]{
  \def\xCoord{0} \def\yCoord{0}
  \def\Path{(0,0)}
  \def\RPath{(0,0)}
  \def\LastPoint{(0,-1)}
  \foreach \num [count=\n from 1] in {#2}
   {
    \global\edef\RPath{  \LastPoint -- \RPath }
    \global\edef\LastN{\n}
    \ifodd\n
     \global\edef\xCoord{\xCoord + \num}
    \else
     \global\edef\yCoord{\yCoord + \num}
    \fi
    \global\edef\Path{\Path -- (\xCoord,\yCoord)}
    \global\edef\LastPoint{(\xCoord-1,\yCoord-1)}
   }
   \edef\RPath{ (\xCoord-1,\yCoord) -- \RPath}
   \edef\Path{[#1] \Path -- \RPath }
   \expandafter\draw\Path;
 }

\tikzset{every picture/.style ={baseline ={(0,0)} } }
\tikzset{vertex/.style = {fill, circle, inner sep = 1pt}}
\usetikzlibrary{decorations.pathreplacing}
}

\title{\vspace*{-18pt}Dyck tilings, increasing trees, descents, and inversions}
\date{}
\author{
\begin{tabular}{c}
\href{http://newton.kias.re.kr/~kimjs/}{Jang Soo Kim}\thanks{Current affiliation is Korea Institute for Advanced Study.}\\[-3pt]
\small University of Minnesota
\end{tabular} \and
\begin{tabular}{c}
\href{http://www.math.cornell.edu/~karola/}{Karola M\'esz\'aros}\thanks{Supported in part by a National Science Foundation Postdoctoral Fellowship (DMS 1103933).} \thanks{Current affiliation is Cornell University.}\\[-3pt]
\small University of Michigan
\end{tabular} \and
\begin{tabular}{c}
\href{http://www.math.ucla.edu/~panova/}{Greta Panova}\thanks{Supported by a Simons Postdoctoral Fellowship.}\\[-3pt]
\small University of California at Los Angeles
\end{tabular} \and
\begin{tabular}{c}
\href{http://dbwilson.com}{David B. Wilson}\\[-3pt]
\small Microsoft Research
\end{tabular}
}
\maketitle
\vspace*{-6pt}
\begin{abstract}
  Cover-inclusive Dyck tilings are tilings of skew Young diagrams with
  ribbon tiles shaped like Dyck paths, in which tiles are no larger
  than the tiles they cover.  These tilings arise in the study of
  certain statistical physics models and also Kazhdan--Lusztig
  polynomials.  We give two bijections between cover-inclusive Dyck
  tilings and linear extensions of tree posets.  The first bijection
  maps the statistic (area + tiles)/2 to inversions of the linear
  extension, and the second bijection maps the ``discrepancy'' between
  the upper and lower boundary of the tiling to descents of the linear
  extension.
\end{abstract}

\section{Introduction} \label{sec:intro}

Kenyon and Wilson \cite{kw} and Shigechi and Zinn-Justin \cite{MR2927185}
independently introduced the notion of ``cover-inclusive Dyck tilings'' (defined below).
The probabilities of certain events pertaining to the double-dimer model
and spanning trees are given by formulas that involve counting these Dyck tilings
\cite{kw,kw:annular}.  Dyck tilings are also relevant to the study of Kazhdan--Lusztig polynomials
\cite{MR2927185}.  More recently, Dyck tilings have arisen
in connection with fully packed loop systems \cite[Remark~2.10]{fisher-nadeau}
and other contexts \cite{fayers}.

We give two new bijections between Dyck tilings
and perfect matchings, which when restricted to Dyck tilings with a
certain ``lower path,'' become bijections to linear extensions of a
tree poset.  The first bijection is compatible with the number of
inversions of a linear extension, and gives a bijective proof of a
formula that was conjectured by Kenyon and Wilson
\cite[Conjecture~1]{kw} and proved non-bijectively by Kim \cite{kim}.
The second bijection is compatible with descents of the linear extension,
and leads to a new enumeration formula.
We also conjecture a third enumeration formula.

\subsection{Background}
Dyck paths of order $n$ are often defined as staircase lattice paths
on an $n\times n$ square grid, from the lower-left corner to the
upper-right corner, which do not go below the diagonal.  Each such
Dyck path has associated with it a Young diagram formed from the boxes
above and to the left of it.
If we rotate the lattice by $45^\circ$ and dilate it
by a factor of $\sqrt{2}$, then each step of the Dyck path is either
\begin{figure}[t]
\begin{center}
\tz{\includegraphics{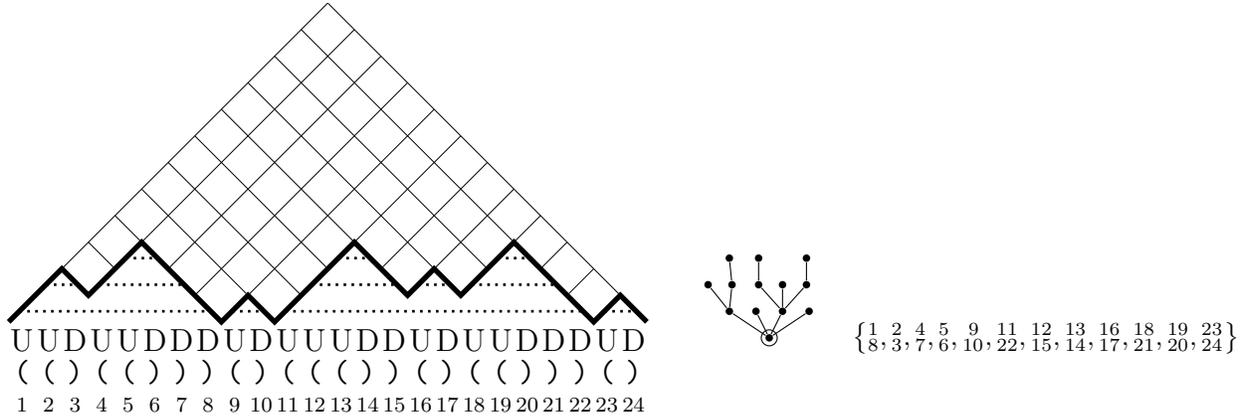}}{
\begin{tikzpicture}[rotate=45,scale=0.5]
\DyckPath[draw, line width =2pt] {2,-1,2,-3,1,-1,3,-2,1,-1,2,-3,1,-1};
\begin{scope}[shift={(19.2,-19.8)},rotate=-45,scale=0.707] 
\node at (0,0) {\footnotesize $\left\{{}^{1}_{8},{}^{2}_{3},{}^{4}_{7},{}^{5}_{6},{}^{\;9}_{10},{}^{11}_{22},{}^{12}_{15},{}^{13}_{14},{}^{16}_{17},{}^{18}_{21},{}^{19}_{20},{}^{23}_{24}\right\}$};
\end{scope}
\begin{scope}[shift={(14,-14.6)},rotate=-45,scale=0.707] 
 \node (0) at (0,0) [vertex] {};
 \node (1) at (-1.5,1)[vertex] {};
 \node (2) at (-0.5,1) [vertex] {};
 \node (3) at (0.5,1) [vertex] {};
 \node (4) at (1.5,1) [vertex] {};
 \node (5) at (-2.3,2) [vertex] {};
 \node (6) at (-1.4,2) [vertex] {};
 \node (7) at (-.4,2) [vertex] {};
 \node (8) at (0.5,2) [vertex] {};
 \node (9) at (1.4,2) [vertex] {};
 \node (10) at (-1.5,3) [vertex] {};
 \node (11) at (-.4,3) [vertex] {};
 \node (12) at (1.4,3) [vertex] {};
 \draw (0)--(1);
 \draw (0)--(2);
 \draw (0)--(3);
 \draw (0)--(4);
 \draw (0)circle(0.3);
 \draw (1)--(5);
 \draw (1)--(6);
 \draw (3)--(7);
 \draw (3)--(8);
 \draw (3)--(9);
 \draw (6)--(10);
 \draw (7)--(11);
 \draw (9)--(12);
\end{scope}
\draw[line width=0.3pt](2,0)--(12,0);
\draw[line width=0.3pt](2,-1)--(12,-1);
\draw[line width=0.3pt](4,-2)--(12,-2);
\draw[line width=0.3pt](4,-3)--(12,-3);
\draw[line width=0.3pt](4,-4)--(12,-4);
\draw[line width=0.3pt](5,-5)--(12,-5);
\draw[line width=0.3pt](8,-6)--(12,-6);
\draw[line width=0.3pt](8,-7)--(12,-7);
\draw[line width=0.3pt](9,-8)--(12,-8);
\draw[line width=0.3pt](11,-9)--(12,-9);
\draw[line width=0.3pt](11,-10)--(12,-10);
\draw[line width=0.3pt](11,-11)--(12,-11);
\draw[line width=0.3pt](12,0)--(12,-11);
\draw[line width=0.3pt](11,0)--(11,-11);
\draw[line width=0.3pt](10,0)--(10,-8);
\draw[line width=0.3pt](9,0)--(9,-8);
\draw[line width=0.3pt](8,0)--(8,-7);
\draw[line width=0.3pt](7,0)--(7,-5);
\draw[line width=0.3pt](6,0)--(6,-5);
\draw[line width=0.3pt](5,0)--(5,-5);
\draw[line width=0.3pt](4,0)--(4,-4);
\draw[line width=0.3pt](3,0)--(3,-1);
\draw[line width=0.3pt](2,0)--(2,-1);
\begin{scope}
\draw [dotted, line width=1pt](0.4,0)  -- (4,-3.6);
\draw [dotted, line width=1pt](1.4,0)  -- (2,-0.6);
\draw [dotted, line width=1pt](2.4,-1) -- (4,-2.6);
\draw [dotted, line width=1pt](3.4,-1) -- (4,-1.6);
\draw [dotted, line width=1pt](4.4,-4) -- (5,-4.6);
\draw [dotted, line width=1pt](5.4,-5) -- (11,-10.6);
\draw [dotted, line width=1pt](6.4,-5) -- (8,-6.6);
\draw [dotted, line width=1pt](7.4,-5) -- (8,-5.6);
\draw [dotted, line width=1pt](8.4,-7) -- (9,-7.6);
\draw [dotted, line width=1pt](9.4,-8) -- (11,-9.6);
\draw [dotted, line width=1pt](10.4,-8) -- (11,-8.6);
\draw [dotted, line width=1pt](11.4,-11) -- (12,-11.6);
\end{scope}
\begin{scope}[shift={(-0.1,-0.1)}]
\node at (0,-0.5){U};
\node at (0.5,-1){U};
\node at (1,-1.5){D};
\node at (1.5,-2){U};
\node at (2,-2.5){U};
\node at (2.5,-3){D};
\node at (3,-3.5){D};
\node at (3.5,-4){D};
\node at (4,-4.5){U};
\node at (4.5,-5){D};
\node at (5,-5.5){U};
\node at (5.5,-6){U};
\node at (6,-6.5){U};
\node at (6.5,-7){D};
\node at (7,-7.5){D};
\node at (7.5,-8){U};
\node at (8,-8.5){D};
\node at (8.5,-9){U};
\node at (9,-9.5){U};
\node at (9.5,-10){D};
\node at (10,-10.5){D};
\node at (10.5,-11){D};
\node at (11,-11.5){U};
\node at (11.5,-12){D};
\end{scope}
\begin{scope}[shift={(-0.7,-0.7)}]
\node at (0,-0.5){\op};
\node at (0.5,-1){\op};
\node at (1,-1.5){\cp};
\node at (1.5,-2){\op};
\node at (2,-2.5){\op};
\node at (2.5,-3){\cp};
\node at (3,-3.5){\cp};
\node at (3.5,-4){\cp};
\node at (4,-4.5){\op};
\node at (4.5,-5){\cp};
\node at (5,-5.5){\op};
\node at (5.5,-6){\op};
\node at (6,-6.5){\op};
\node at (6.5,-7){\cp};
\node at (7,-7.5){\cp};
\node at (7.5,-8){\op};
\node at (8,-8.5){\cp};
\node at (8.5,-9){\op};
\node at (9,-9.5){\op};
\node at (9.5,-10){\cp};
\node at (10,-10.5){\cp};
\node at (10.5,-11){\cp};
\node at (11,-11.5){\op};
\node at (11.5,-12){\cp};
\end{scope}
\begin{scope}[shift={(-1.3,-1.3)}]
\node at (0,-0.5){\scriptsize 1};
\node at (0.5,-1){\scriptsize 2};
\node at (1,-1.5){\scriptsize 3};
\node at (1.5,-2){\scriptsize 4};
\node at (2,-2.5){\scriptsize 5};
\node at (2.5,-3){\scriptsize 6};
\node at (3,-3.5){\scriptsize 7};
\node at (3.5,-4){\scriptsize 8};
\node at (4,-4.5){\scriptsize 9};
\node at (4.5,-5){\scriptsize 10};
\node at (5,-5.5){\scriptsize 11};
\node at (5.5,-6){\scriptsize 12};
\node at (6,-6.5){\scriptsize 13};
\node at (6.5,-7){\scriptsize 14};
\node at (7,-7.5){\scriptsize 15};
\node at (7.5,-8){\scriptsize 16};
\node at (8,-8.5){\scriptsize 17};
\node at (8.5,-9){\scriptsize 18};
\node at (9,-9.5){\scriptsize 19};
\node at (9.5,-10){\scriptsize 20};
\node at (10,-10.5){\scriptsize 21};
\node at (10.5,-11){\scriptsize 22};
\node at (11,-11.5){\scriptsize 23};
\node at (11.5,-12){\scriptsize 24};
\end{scope}
\end{tikzpicture}
}
\end{center}
\caption{A Dyck path $\lambda$ of order $n=12$ is shown in bold on the left.  Above it is its associated (rotated) Young diagram.  Below the Dyck path is its corresponding Dyck word, its balanced parentheses expression, and coordinates for the step positions.  The horizontal dotted lines depict the chords between the matching up and down steps.  In the middle is the planted plane tree corresponding to the Dyck path, where each node other than the root corresponds to a chord.  On the right is the set of chords of the Dyck path, where each chord is represented as the pair of coordinates of the matching up and down steps.  (See \cite[Exercise~6.19]{stanley}.)  A linear extension of the chord poset of $\lambda$ can be represented by placing the numbers $1,\dots,n$ on the chords of the diagram on the left (so that if one chord is nested within another, it gets a higher number), or equivalently by placing the numbers $0,\dots,n$ on the vertices of the planted plane tree (so that the numbers increase when going up the tree), or equivalently by ordering the set of chords represented as pairs (so that if one pair is nested within another, it occurs later).  See also \fref{objects}.}
\label{ud}
\end{figure}
$(+1,+1)$ (an up or ``U'' step) or $(+1,-1)$ (a down or ``D'' step).
(See \fref{ud}.)
This rotated form will be more convenient for us to work with.
A Dyck path's sequence of U and D steps, when concatenated, forms
a word which is called a Dyck word.
If the U steps are written as ``\op'' and the down steps are written
as ``\cp'', then the Dyck word is a balanced parentheses expression.

\tz{\enlargethispage{3pt}}{}
We define a \textbf{chord} of a Dyck path $\lambda$ to be the
horizontal segment between an up step and the matching down step, as
shown in \fref{ud}.  The chords of a Dyck path~$\lambda$ naturally
form a \textbf{chord poset} $P_\lambda$, where nesting is the order
relation, i.e., one chord is above another chord in the partial order
if its horizontal span lies within the span of the other chord, or
equivalently, if the \op\ and \cp\ corresponding to the first chord
are nested within the \op\ and \cp\ corresponding to the second chord.
If we adjoin a bottom-most element to the chord poset, we call the
result the tree poset (see \fref{ud}), since its Hasse diagram is a \textbf{planted plane tree},
i.e., a tree embedded in the upper half-plane with a single distinguished vertex
(the root) on the boundary of the half-plane, where two such embedded trees are considered
equivalent if their embeddings are isotopic.  Combinatorially, a planted plane tree
is a tree with a distinguished root vertex, such that the children of any vertex are ordered.
There is a
natural bijection between Dyck paths of order $n$ and planted plane
trees with $n+1$ vertices (see \cite[Exercise~6.19]{stanley}, or \fref{ud}).

\begin{figure}[t!]
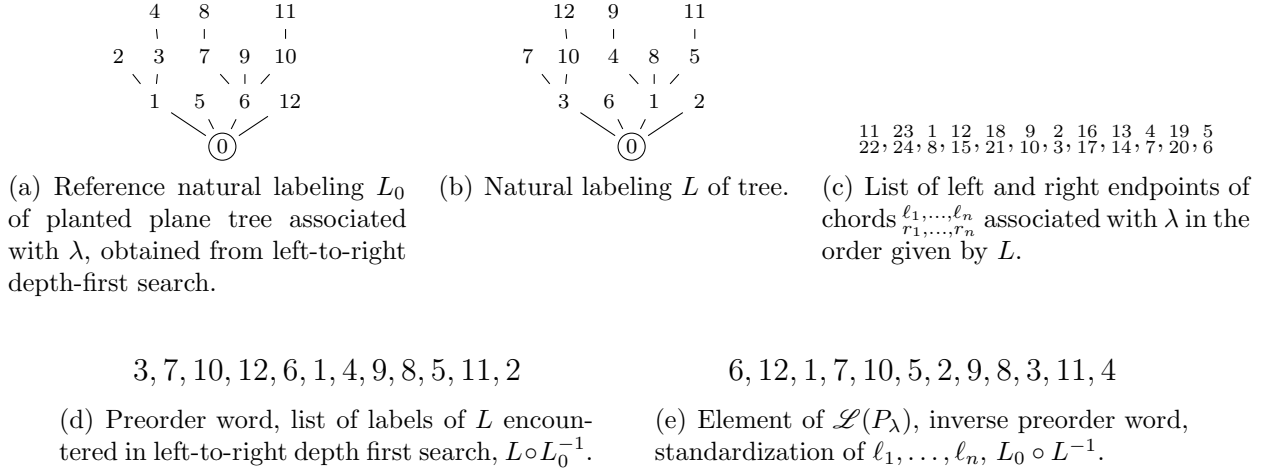

\begin{center}
\subfigure[Reference natural labeling $L_0$ of planted plane tree associated with $\lambda$, obtained from left-to-right depth-first search.]{
\hspace{24pt}
\tz{\includegraphics{dyck-tiling-figure1.pdf}}{
\begin{tikzpicture}
\begin{scope}[shift={(0,0)},scale=0.6,font=\scriptsize] 
 \node (0) at (0,0)  {0};
 \node (1) at (-1.5,1) {1};
 \node (2) at (-0.5,1) {5};
 \node (3) at (0.5,1) {6};
 \node (4) at (1.5,1) {12};
 \node (5) at (-2.3,2) {2};
 \node (6) at (-1.4,2) {3};
 \node (7) at (-.4,2) {7};
 \node (8) at (0.5,2) {9};
 \node (9) at (1.4,2) {10};
 \node (10) at (-1.5,3) {4};
 \node (11) at (-.4,3) {8};
 \node (12) at (1.4,3) {11};
 \draw (0)--(1);
 \draw (0)--(2);
 \draw (0)--(3);
 \draw (0)--(4);
 \draw (0)circle(0.3);
 \draw (1)--(5);
 \draw (1)--(6);
 \draw (3)--(7);
 \draw (3)--(8);
 \draw (3)--(9);
 \draw (6)--(10);
 \draw (7)--(11);
 \draw (9)--(12);
\end{scope}
\end{tikzpicture}
}
\hspace{24pt}\rule[-10pt]{0pt}{0pt}
}
\hfill
\subfigure[Natural labeling $L$ of tree.]{
\hspace{18pt}
\tz{\includegraphics{dyck-tiling-figure2.pdf}}{
\begin{tikzpicture}
\begin{scope}[shift={(10,0)},scale=0.6,font=\scriptsize] 
 \node (0) at (0,0)  {0};
 \node (1) at (-1.5,1) {3};
 \node (2) at (-0.5,1) {6};
 \node (3) at (0.5,1) {1};
 \node (4) at (1.5,1) {2};
 \node (5) at (-2.3,2) {7};
 \node (6) at (-1.4,2) {10};
 \node (7) at (-.4,2) {4};
 \node (8) at (0.5,2) {8};
 \node (9) at (1.4,2) {5};
 \node (10) at (-1.5,3) {12};
 \node (11) at (-.4,3) {9};
 \node (12) at (1.4,3) {11};
 \draw (0)--(1);
 \draw (0)--(2);
 \draw (0)--(3);
 \draw (0)--(4);
 \draw (0)circle(0.3);
 \draw (1)--(5);
 \draw (1)--(6);
 \draw (3)--(7);
 \draw (3)--(8);
 \draw (3)--(9);
 \draw (6)--(10);
 \draw (7)--(11);
 \draw (9)--(12);
\end{scope}
\end{tikzpicture}
}
\hspace{18pt}\rule[-10pt]{0pt}{0pt}
}
\hfill
\subfigure[List of left and right endpoints of chords ${}^{\ell_1,\dots,\ell_n}_{r_1,\dots,r_n}$ associated with $\lambda$ in the order given by $L$.]{
\footnotesize \quad$
{}^{11}_{22},
{}^{23}_{24},
{}^{1}_{8},
{}^{12}_{15},
{}^{18}_{21},
{}^{\;9}_{10},
{}^{2}_{3},
{}^{16}_{17},
{}^{13}_{14},
{}^{4}_{7},
{}^{19}_{20},
{}^{5}_{6}
$\quad\rule[-10pt]{0pt}{0pt}
}
\end{center}
\begin{center}
\hfill
\subfigure[Preorder word, list of labels of $L$ encountered in left-to-right depth first search, $L \circ L_0^{-1}$.]{
\hspace{24pt}$
3,7,10,12,6,1,4,9,8,5,11,2
$\hspace{24pt}\rule[-10pt]{0pt}{0pt}
}
\hfill
\subfigure[Element of $\L$, inverse preorder word, standardization of $\ell_1,\dots,\ell_n$, $L_0 \circ L^{-1}$.]{
\hspace{24pt}$
6,12,1,7,10,5,2,9,8,3,11,4
$\hspace{24pt}\rule[-10pt]{0pt}{0pt}
}
\hfill \rule{0pt}{0pt}
\end{center}
\caption{Natural labelings of a planted plane tree and their associated permutations.}
\label{extension}
\end{figure}

\tz{\enlargethispage{4pt}}{}
A \textbf{natural labeling} of a poset $P$ with $n$ elements is an
order-preserving bijection $L:P\to [n]$, where $[n]$ denotes $\{1,2,\ldots,n\}$.
For the tree poset
associated with $\lambda$, it is more convenient to take a natural
labeling of the chord poset $P_\lambda$, and then label the root by $0$, as
shown in \fref{extension}.  A planted plane tree
with a natural labeling is called an \textbf{increasing planted plane tree}.
As is well known, there are $(2n-1)!!$ increasing
planted plane trees on $n+1$ vertices (see
\cite[Corollary~1(iv)]{BFS}).  Given a labeled tree $L$, if we delete
all vertices with labels larger than $k\leq n$, the result is a
labeled tree $L^{(k)}$ on $k+1$ vertices (including the root
labeled~$0$).  Given $L^{(k-1)}$, there are $2k-1$ possible positions
where the vertex with label $k$ may be attached to the labeled tree
$L^{(k-1)}$.  (Each time a new vertex gets added to the tree, the
subsequent vertex has one less possible attachment location but three
new ones: just before, just after, and on top of the new vertex.)
Thus, to each labeled tree $(P_\lambda,L)$ we can derive a sequence of attachment
sites $p_1,\ldots,p_n$, where $0\leq p_i < 2i-1$.  Any such sequence
determines a poset $P_\lambda$ together with a natural labeling $L$ of
$P_\lambda$, and the map from sequences to pairs $(P_\lambda,L)$ is a
bijection.
In terms of the endpoints of the chords, this sequence of insertion locations is given by
$$p_i = \#\{j<i \; : \; \ell_j < \ell_i\} + \#\{j<i \; : \; r_j < \ell_i \} ,$$
where $\ell_i$ and $r_i$ denote the left and right endpoints
of the chord labeled $i$, as in \frefs{extension}{~c}.

Let $L_0$ be the natural labeling of $P_\lambda$ which orders the
chords by their left endpoints (\frefs{extension}{~a}).  The
\textbf{preorder word} of a natural labeling $L$ of $P_\lambda$ is $L
\circ L_0^{-1}$, which is the permutation on $[n]$ obtained by reading
the labels of $L$ (excluding $0$) in a left-to-right depth-first
order (\frefs{extension}{~d}).  The inverse of the preorder word, $\sigma=L_0 \circ L^{-1}$,
turns out to be a more natural object.  It can also be obtained as the
``standardization'' of the sequence of left-endpoints
$\ell_1,\dots,\ell_n$, i.e., $\sigma$ is the permutation on $[n]$ for
which $\sigma_i < \sigma_j$ iff $\ell_i<\ell_j$ (\frefs{extension}{~e}).
If $\omega$ is a natural labeling of the poset $P$, Stanley \cite{MR0332509} defines
$$ \LL(P,\omega) = \{\omega \circ L^{-1} : \text{$L$ is a natural labeling of $P$}\},$$
and these are sometimes called the linear extensions of the labeled poset $(P,\omega)$.
We will abbreviate $\LL(P_\lambda)=\LL(P_\lambda,L_0)$.

It is also well known that there are $(2n-1)!!$ perfect matchings on
the numbers $1,\dots,2n$.  Given the sequence $p_1,\dots,p_n$, one
natural way to associate a perfect matching $\match(p_1,\dots,p_n)$
with it is to take $\match(p_1,\dots,p_{n-1})$, increment all the
numbers that are bigger then $p_n$, and then adjoin pair $(p_n+1,2n)$.
We define the \textbf{min-word} of the matching to be the list of the
smaller item of each pair, sorted in order of the larger item in each
pair, see \fref{objects}.

Given two Dyck paths $\lambda$ and $\mu$ of order $n$, if the path
$\mu$ is at least as high as the path $\lambda$ at each horizontal
position, then we write $\mu \subset \lambda$ (since the Young diagram
associated with $\mu$ is a subset of $\lambda$'s Young diagram), and we
write $\lambda/\mu$ for the skew Young diagram which consists of the
boxes between $\lambda$ and $\mu$.

\textbf{Dyck tiles}, also called ``Dyck strips'' in \cite{MR2927185},
are ribbon tiles (connected skew shapes that do
not contain a $2\times 2$ rectangle) in which the leftmost and
rightmost boxes are at the same height, and no box within the tile is
below these endpoints. (If each vertex in a Dyck path is replaced with
a box, and the boxes are glued together, then the result is a Dyck
tile, which explains the terminology.)  A tiling of a skew Young
diagram by Dyck tiles is a \textbf{Dyck tiling}.  We say that one Dyck
tile covers another Dyck tile if the first tile has at least one box
whose center lies straight above the center of a box in the second
tile. A Dyck tiling is called \textbf{cover-inclusive} if for each
pair of its tiles, when the first tile covers the second tile, then
the horizontal extent of the first tile is included as a subset within
the horizontal extent of the second tile.  We denote by $\D(\lambda,\mu)$
the set of all cover-inclusive Dyck tilings of shape $\lambda/\mu$, and let
$$\D(\lambda,*)=\bigcup_{\mu} \D(\lambda/\mu).$$
\fref{tilings} shows all the cover-inclusive Dyck tilings of a certain skew shape.

\begin{figure}[t!]
\begin{center}
\tz{\includegraphics[width=\textwidth]{skew-big-tile.pdf}}{\includegraphics[width=\textwidth]{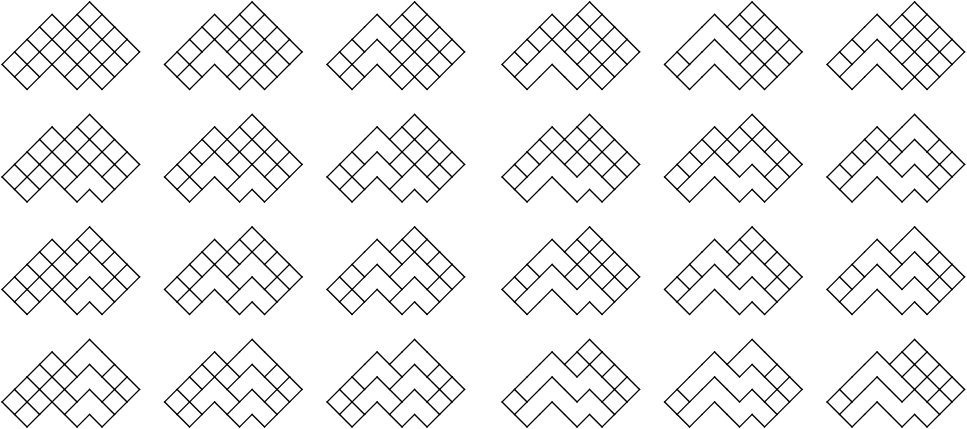}}
\caption{All the cover-inclusive Dyck tilings of a particular skew shape.  (This figure
  first appeared in \cite{kw}.)
  The generating function for the tilings of this skew shape by number of tiles is
  $t^7 + 2 t^9 + 4 t^{11} + 5 t^{13} + 5 t^{15} + 4 t^{17} + 2 t^{19} + t^{21}$, which, as
  discussed in \cite{MR2927185}, is closely related to a Kazhdan--Lusztig polynomial.
}
\label{tilings}
\end{center}
\end{figure}

\subsection{Connections between Dyck tilings and increasing trees}
It was observed empirically that there is a close connection between
Dyck tilings and linear extensions of planted plane trees.  More specifically, for a Dyck
path $\lambda$ of order $n$, the total number of cover-inclusive Dyck tilings of skew
shape $\lambda/\mu$ for some $\mu$ was conjectured \cite[Conjecture~1]{kw}
and subsequently proven \cite{kim} to be
\begin{equation} \label{hook}
  \left| \D(\lambda,*) \right| = \frac{n!}{\prod_{\text{chords $c$ of $\lambda$}} |c|},
\end{equation}
where $|c|$ is the length of the chord $c$ as measured in terms of
number of up steps in $\lambda$ between and including its ends.
This formula has the form of the tree hook-length formula
of Knuth~\cite[\old{Exercise~20, }pg.~70]{knuth3} for the
number of linear extensions of the tree poset $P_\lambda$.
These formulas call out for a bijection between cover-inclusive Dyck
tilings and linear extensions.  Here we give two such bijections.
These bijections were in part inspired by a bijection due to Aval,
Boussicault, and Dasse-Hartaut \cite{abdh} (see also \cite{abn}),
to which one of our bijections specializes in the case where $P_\lambda$ is the antichain.
Our bijections actually provide refined enumeration formulas,
which relate statistics (defined below) on the Dyck tilings to
well-studied statistics on the permutations.

For two Dyck paths $\lambda$ and $\mu$, let
$\mathbf{dis}(\lambda,\mu)$ be the ``\underline{dis}crepancy'' between
$\lambda$ and $\mu$, i.e., the number of locations where $\lambda$ has
a down step while $\mu$ has an up step (which also equals half the
number of locations where $\lambda$ and $\mu$ step in opposite
directions, i.e., half the Hamming \underline{dis}tance between the
Dyck words of $\lambda$ and $\mu$).  For a skew shape $\lambda/\mu$ we
define $\dis(\lambda/\mu)=\dis(\lambda,\mu)$.

For a Dyck tiling $T$ of shape $\sh(T)=\lambda/\mu$, we define
$\dis(T)=\dis(\lambda,\mu)$, and $\mathbf{tiles}(T)$ to be the number
of tiles in $T$, and $\mathbf{area}(T)$ to be the number of boxes of the skew
shape $\lambda/\mu$.  We define
$$\art(T)=(\text{\underline{ar}ea}(T)+\text{\underline{t}iles}(T))/2.$$
The art statistic is always integer-valued since each tile has odd
area, and appears to be more natural than the area statistic.

\old{
\newcommand{\YOminword}{7}
\newcommand{\YOminwordT}{5.5}
\newcommand{\YOcsequence}{4}
\newcommand{\YOpairing}{2.5}
\newcommand{\YOsequence}{0}
\newcommand{\YOlabeledpath}{-3}
\newcommand{\YOlabeledtree}{-7}
\newcommand{\YOPreorder}{-10}
\newcommand{\YOPreorderi}{-12.5}
\newcommand{\YOLrinorder}{-13.9}
\newcommand{\YOlRinorder}{-15.0}
\newcommand{\YOtiling}{-19.1}
\newcommand{\YOtilingsDelta}{-3.6}
\newcommand{\YOdescents}{-25.6}
\newcommand{\YOinversions}{-28.4}
\newcommand{\YOnest}{-31.2}

\newcommand{\YLminword}{6.6}
\newcommand{\YLmatching}{2.5}
\newcommand{\YLsequence}{0}
\newcommand{\YLlabeledpath}{-3}
\newcommand{\YLlabeledtree}{-6.5}
\newcommand{\YLPreorder}{-10}
\newcommand{\YLPreorderi}{-12.0}
\newcommand{\YLLrinorder}{-13.9}
\newcommand{\YLlRinorder}{-15.0}
\newcommand{\YLtiling}{-18.9}
\newcommand{\YLtilings}{-22.5}
\newcommand{\YLdis}{-25}
\newcommand{\YLdes}{-26.2}
\newcommand{\YLart}{-28.4}
\newcommand{\YLtiles}{-30.6}
\newcommand{\YLnestings}{-31.8}
}

\newcommand{\YOlabeledpath}{6.0}
\newcommand{\YOlabeledtree}{2.0}
\newcommand{\YOPreorder}{-1.0}
\newcommand{\YOPreorderi}{-3.5}
\newcommand{\YOLrinorder}{-4.9}
\newcommand{\YOlRinorder}{-6.0}
\newcommand{\YOsequence}{-8.5}
\newcommand{\YOpairing}{-12.0}
\newcommand{\YOminword}{-14.1}
\newcommand{\YOminwordT}{-15.6}
\newcommand{\YOtiling}{-19.7}
\newcommand{\YOtilingsDelta}{-4.2}
\newcommand{\YOdescents}{-28.2}
\newcommand{\YOinversions}{-31.0}
\newcommand{\YOnest}{-33.8}

\newcommand{\YLlabeledpath}{6.0}
\newcommand{\YLlabeledtree}{2.5}
\newcommand{\YLPreorder}{-1.0}
\newcommand{\YLPreorderi}{-3.0}
\newcommand{\YLLrinorder}{-4.9}
\newcommand{\YLlRinorder}{-6.0}
\newcommand{\YLsequence}{-8.5}
\newcommand{\YLmatching}{-12.0}
\newcommand{\YLminword}{-14.5}
\newcommand{\YLtiling}{-19.5}
\newcommand{\YLtilings}{-23.1}
\newcommand{\YLdis}{-27.6}
\newcommand{\YLdes}{-28.8}
\newcommand{\YLart}{-31.0}
\newcommand{\YLtiles}{-33.2}
\newcommand{\YLnestings}{-34.4}

\begin{figure}[!p]
\begin{center}
\newcommand{\ud}{\mathclap{\underset{\updownarrow}{}}}
\newcommand{\di}{\mathclap{\overset{\downarrow}{\phantom{1}}}}
\newcommand{\labeledtree}[4]{
\begin{scope}[shift={(2,\YOlabeledtree)},font=\scriptsize]
 \node (O) at (0.2,0)  {0};
 \node (A) at (-.9,1) {#1};
 \node (B) at (1.5,1)  {#2};
 \node (C) at (0.6,2)  {#3};
 \node (D) at (2.4,2)  {#4};

 \draw (O)--(A);
 \draw (O)--(B);
 \draw (B)--(C);
 \draw (B)--(D);
\end{scope}
}
\newcommand{\labeledpath}[4]{
\begin{scope}[shift={(0,\YOlabeledpath)},scale=0.9,rotate=45,font=\scriptsize]
\DyckPath[draw]{1,-1,2,-1,1,-2};
\draw node at (0.5,-0.5) {#1};
\draw node at (2.5,-2.5) {#2};
\draw node at (2.5,-1.5) {#3};
\draw node at (3.5,-2.5) {#4};
\end{scope}
}
\newcommand{\pairing}[1]{
\begin{scope}[shift={(0,\YOpairing)},scale=0.9,font=\small]
 \path [start chain = arcs going right, node distance = 0pt, every node/.style = {on chain, inner xsep = 0.3pt, inner ysep=2pt}]
  \foreach \x in {1,...,8} {
   node {\footnotesize\x}
 };
 \path[every edge/.style = {draw, bend left=60}]
  \foreach \x/\y in {#1} {
   (arcs-\x.north) edge (arcs-\y.north)
  };
\end{scope}
}
\newcommand{\sequence}[1]{
\node at(2.5,\YOsequence) {\footnotesize$#1$};
}
\newcommand{\csequence}[1]{
\node at(2,\YOcsequence) {\footnotesize$#1$};
}
\newcommand{\lrinorder}[2]{
\node at (2.5,\YOLrinorder) {\footnotesize $#1$};
\node at (2.5,\YOlRinorder) {\footnotesize $#2$};
}
\newcommand{\Preorder}[1]{
\node at(2.5,\YOPreorder) {\footnotesize$#1$};
}
\newcommand{\Preorderi}[1]{
\node at (2.5,\YOPreorderi) {\footnotesize$#1$};
}
\newcommand{\descents}[1]{
\node at (2.7,\YOdescents) {$#1$};
}
\newcommand{\inversions}[1]{
\node at (2.7,\YOinversions) {$#1$};
}
\newcommand{\nest}[1]{
\node at (2.7,\YOnest) {$#1$};
}
\newcommand{\minword}[2]{
\node at (2.5,\YOminword) {\footnotesize$#1$};
\node at (2.5,\YOminwordT) {\footnotesize$#2$};
}
\newcommand{\tiling}[3]{
\begin{scope}[shift={(0,\YOtiling)},scale=0.9,rotate=45]
\DyckPath[draw]{#1};
\DyckPath[draw]{#2};
\DyckPath[draw]{#3};
\DyckPath[draw, line width =1pt]{1,-1,2,-1,1,-2};
\end{scope}
}
\newcommand{\tilings}[3]{
\begin{scope}[shift={(0,\YOtilingsDelta)}]
\tiling{#1}{#2}{#3}
\end{scope}
}

\tz{\includegraphics{dyck-tiling-figure3.pdf}}{
\begin{tikzpicture}[scale=0.98,every node/.style = {inner sep = 0.1pt}]
\begin{scope}[scale=0.3]
\draw [dotted] (0,-25.6)--(47.2,-25.6);
\end{scope}

\begin{scope}[scale=0.3,shift={(-4,0)}]
\node at (0,\YLminword) {\footnotesize min-word $w$};
\node at (0,\YLmatching) {\footnotesize matching};
\node at (0,\YLsequence) {\footnotesize $p_1,\ldots,p_n$};
\node at (-0.5,\YLtiling) {\footnotesize Dyck tiling $\DTr$\ \ };
\node at (-0.5,\YLtilings) {\footnotesize Dyck tiling $\DTs$\ \ };
\node at (0,\YLlabeledpath) {\footnotesize labeled chords};
\node at (0,\YLlabeledtree) {\footnotesize labeled tree};
\node at (0,\YLPreorder) {\footnotesize preorder word};
\node at (0,\YLPreorderi) {\footnotesize inverse-preorder $\sigma$};
\node at (0,\YLLrinorder) {\footnotesize left-endpoints $\ell$};
\node at (0,\YLlRinorder) {\footnotesize right-endpoints $r$};
\node at (0,\YLdis) {\footnotesize $\des(\sigma)=\dis(\DTr)$};
\node at (0,\YLdes) {\footnotesize \ \ \ \ \ $=\des(w)$};
\node at (0,\YLart) {\footnotesize $\inv(\sigma)=\art(\DTs)$};
\node at (0,\YLtiles) {\footnotesize $\inv(w)=\tiles(\DTs)$};
\node at (0,\YLnestings) {\footnotesize $=\operatorname{nestings}(\text{matching})$};
\end{scope}

\begin{scope}[scale=0.3]
\minword{1346\phantom{\di}}{2578}
\pairing{1/2,3/5,4/7,6/8}
\sequence{0,2,3,5}
\labeledpath{1}{2}{3}{4}
\labeledtree{1}{2}{3}{4}
\Preorder{1234}
\Preorderi{1234\phantom{\ud}}
\lrinorder{1346}{2857}
\descents{0}
\inversions{0}
\nest{0}
\tiling{1,-1,2,-1,1,-2}{1,-1,2,-1,1,-2}{1,-1,2,-1,1,-2}
\tilings{1,-1,2,-1,1,-2}{1,-1,2,-1,1,-2}{1,-1,2,-1,1,-2}
\end{scope}

\begin{scope}[scale=0.3,shift = {(6,0)}]
\minword{135\di4}{2678}
\pairing{1/2,3/6,4/8,5/7}
\sequence{0,2,3,3}
\labeledpath{1}{2}{4}{3}
\labeledtree{1}{2}{4}{3}
\Preorder{1243}
\Preorderi{124\ud3}
\lrinorder{1364}{2875}
\descents{1}
\inversions{1}
\nest{1}
\tiling{1,-1,2,-1,1,-2}{1,-1,2,-1,1,-2}{1,-1,3,-3}
\tilings{1,-1,2,-1,1,-2}{1,-1,2,-1,1,-2}{1,-1,3,-3}
\end{scope}

\begin{scope}[scale=0.3,shift={(12,0)}]
\minword{2\di146}{3578}
\pairing{1/5,2/3,4/7,6/8}
\sequence{0,0,2,5}
\labeledpath{2}{1}{3}{4}
\labeledtree{2}{1}{3}{4}
\Preorder{2134}
\Preorderi{2\ud134}
\lrinorder{3146}{8257}
\descents{1}
\inversions{1}
\nest{1}
\tiling{1,-1,2,-1,1,-2}{1,-1,2,-1,1,-2}{2,-1,1,-1,1,-2}
\tilings{1,-1,2,-1,1,-2}{1,-1,2,-1,1,-2}{2,-1,1,-1,1,-2}
\end{scope}

\begin{scope}[scale=0.3,shift={(18,0)}]
\minword{2\di15\di4}{3678}
\pairing{1/6,2/3,4/8,5/7}
\sequence{0,0,3,3}
\labeledpath{2}{1}{4}{3}
\labeledtree{2}{1}{4}{3}
\Preorder{2143}
\Preorderi{2\ud14\ud3}
\lrinorder{3164}{8275}
\descents{2}
\inversions{2}
\nest{2}
\tiling{1,-1,2,-1,1,-2}{1,-1,2,-1,1,-2}{2,-1,2,-3}
\tilings{1,-1,2,-1,1,-2}{1,-1,2,-1,1,-2}{2,-1,2,-3}
\end{scope}

\begin{scope}[scale=0.3,shift={(24,0)}]
\minword{23\di16}{4578}
\pairing{1/7,2/4,3/5,6/8}
\sequence{0,1,0,5}
\labeledpath{3}{1}{2}{4}
\labeledtree{3}{1}{2}{4}
\Preorder{3124}
\Preorderi{23\ud14}
\lrinorder{3416}{8527}
\descents{1}
\inversions{2}
\nest{2}
\tiling{1,-1,2,-1,1,-2}{2,-1,1,-1,1,-2}{3,-2,1,-2}
\tilings{1,-1,2,-1,1,-2}{2,-1,1,-1,1,-2}{3,-2,1,-2}
\end{scope}

\begin{scope}[scale=0.3,shift={(30,0)}]
\minword{23\di14}{5678}
\pairing{1/7,2/5,3/6,4/8}
\sequence{0,1,0,3}
\labeledpath{3}{1}{4}{2}
\labeledtree{3}{1}{4}{2}
\Preorder{3142}
\Preorderi{24\ud13}
\lrinorder{3614}{8725}
\descents{1}
\inversions{3}
\nest{2}
\tiling{1,-1,2,-1,1,-2}{2,-1,1,-1,1,-2}{4,-4}
\tilings{1,-1,2,-1,1,-2}{2,-1,1,-1,1,-2}{4,-4}
\end{scope}

\begin{scope}[scale=0.3,shift={(36,0)}]
\minword{235\di1}{4678}
\pairing{1/8,2/4,3/6,5/7}
\sequence{0,1,3,0}
\labeledpath{4}{1}{2}{3}
\labeledtree{4}{1}{2}{3}
\Preorder{4123}
\Preorderi{234\ud1}
\lrinorder{3461}{8572}
\descents{1}
\inversions{3}
\nest{3}
\tiling{2,-1,1,-1,1,-2}{3,-1,1,-3}{4,-4}
\tilings{1,-1,2,-1,1,-2}{2,-1,1,-1,1,-2}{3,-1,1,-3}
\end{scope}

\begin{scope}[scale=0.3,shift={(42,0)}]
\minword{24\di3\di1}{5678}
\pairing{1/8,2/5,3/7,4/6}
\sequence{0,1,1,0}
\labeledpath{4}{1}{3}{2}
\labeledtree{4}{1}{3}{2}
\Preorder{4132}
\Preorderi{24\ud3\ud1}
\lrinorder{3641}{8752}
\descents{2}
\inversions{4}
\nest{4}
\tiling{1,-1,2,-1,1,-2}{2,-1,1,-1,1,-2}{3,-1,1,-3}
\tilings{2,-1,1,-1,1,-2}{3,-1,1,-3}{4,-4}
\end{scope}

\end{tikzpicture}
}
\end{center}
\caption{Objects associated with the Dyck path $\lambda=$UDUUDUDD.
Row~1 shows natural labelings of the chord poset $P_\lambda$.
Row~2 shows essentially the same thing --- natural labelings of the planted plane tree associated with $\lambda$.
Row~3 shows the labels of the planted plane tree listed in depth-first search order, where children are searched in left-to-right order.  This is equivalent to listing the chord labels in order of the left endpoint.
Row~4 shows the inverse of the permutation from the sixth row, with marks at the descents.  These are the permutations~$\sigma$ of $\L$.
Row~5 shows the left and right endpoints of the chords of $\lambda$, listed in the order of the natural labeling.
Row~6 shows the growth sequences $p_1,\dots,p_n$ which correspond to the increasing planted plane trees in row 2.
Row~7 shows perfect matchings on $1,\dots,2n$ that correspond to the sequences $p_1,\dots,p_n$ in row 6.
Row~8 shows these same perfect matchings, represented as a $2\times n$ array of numbers, where the columns are sorted, and the bottom row is sorted, together with markings at the descents in the top row.  The top row of each $2\times n$ array is the min-word~$w$.  
Rows~9 and 10 show the cover-inclusive Dyck tilings $\DTs(\lambda,\sigma)$ and $\DTr(\lambda,\sigma)$ whose lower path is~$\lambda$.  
The last three rows give statistics on these objects.
Row~11 gives $\des(w)=\des(\sigma)=\des(\ell)=\dis(\DTr)$.
Row~12 gives $\inv(\sigma)=\inv(\ell)=\art(\DTs)$.
Row~13 gives $\inv(w)=\tiles(\DTs)=\operatorname{nestings}(\text{matching})$.
}
\label{objects}
\end{figure}

Recall that the \textbf{inversion} statistic of a permutation $\sigma$ on $[n]$ is defined by
\[
\inv(\sigma)=\#\{(i,j): 1\leq i < j \leq n, \sigma(i)>\sigma(j)\}
\]
and the \textbf{descent} statistic is defined by
\[
\des(\sigma)=\#\{i<n:\sigma_i>\sigma_{i+1}\}.
\]

In \sref{bijections} we give our two bijections, which we call $\DTs$
and $\DTr$, which stand for ``Dyck tiling strip'' and ``Dyck tiling
ribbon'' respectively, for reasons that will become apparent.  The
functions $\DTs$ and $\DTr$ are bijections from the sequences
$p_1,\dots,p_n$ to cover-inclusive Dyck tilings of order $n$ (without
restrictions on the lower path~$\lambda$ or upper path~$\mu$).  As
discussed above, these sequences $p_1,\dots,p_n$ are also in bijection
with increasing planted plane trees, so we can write
$\DTs(\lambda,\sigma)$ and $\DTr(\lambda,\sigma)$ for the maps which
take the labeled tree defined by $\lambda$ and $\sigma$ to the
sequence $p_1,\dots,p_n$ and then to the Dyck tiling.  These three
bijections are compatible with one another in the sense that the lower
paths of $\DTs(\lambda,\sigma)$ and $\DTr(\lambda,\sigma)$ are both
$\lambda$.  Thus, for a given Dyck path $\lambda$ of order $n$, the
maps $\DTs(\lambda,\cdot)$ and $\DTr(\lambda,\cdot)$ are bijections
from $\LL(P_\lambda)$ to $\D(\lambda,*)$, see \fref{objects}.
Furthermore, if $\sigma\in\LL(P_\lambda)$, then
$$\art(\DTs(\lambda,\sigma)) = \inv(\sigma)$$
and $$\dis(\DTr(\lambda,\sigma)) = \des(\sigma).$$
It is natural to ask if there is a bijection from linear extensions in
$\L$ to cover-inclusive Dyck tilings with lower path~$\lambda$ that
simultaneously maps $\inv$ to $\art$ and $\des$ to $\dis$, but as can
be seen from \fref{objects}, no such bijection exists.

Bj\"orner and Wachs \cite{BW} gave the following $q$-analog of Knuth's tree hook-length formula:
\[
  \sum_{\sigma\in \LL(P_\lambda)} q^{\inv(\sigma)}
= \frac{\qint{n}!}{\prod_{\text{vertices $v\in P_\lambda$}} \qint{|\text{subtree rooted at $v$}|}},
\]
where $[n]_q=1+q+\cdots+q^{n-1}$ and $[n]_q!=[1]_q \cdots [n]_q$.
Using this formula together with the $\DTs$ bijection, we get a
bijective proof of the following theorem, originally proven in
\cite{kim} using inductive computation:
\begin{theorem}\cite[Conjecture 1]{kw}\cite{kim}\label{thm:1}
  Given a Dyck path $\lambda$ of order $n$, we have
\begin{equation} \label{qart}
\sum_{\text{Dyck tilings $T\in\D(\lambda,*)$}} q^{\art(T)}
= \frac{\qint{n}!}{\prod_{\text{chords $c$ of $\lambda$}} \qint{|c|}}.
\end{equation}
\end{theorem}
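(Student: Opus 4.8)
The plan is to reduce the statement to the Björner--Wachs $q$-hook-length formula via the bijection $\DTs(\lambda,\cdot)$. Since $\DTs(\lambda,\cdot)$ is a bijection from $\LL(P_\lambda)$ onto $\D(\lambda,*)$ that carries $\inv$ to $\art$, i.e.\ $\art(\DTs(\lambda,\sigma))=\inv(\sigma)$, I would first rewrite the left-hand side of \eqref{qart} as a sum over linear extensions,
$$\sum_{T\in\D(\lambda,*)} q^{\art(T)} = \sum_{\sigma\in\LL(P_\lambda)} q^{\inv(\sigma)}.$$
Applying the Björner--Wachs formula then turns this into $\qint{n}!\big/\prod_{v\in P_\lambda}\qint{|\text{subtree rooted at }v|}$, so that only the denominator remains to be matched with that of \eqref{qart}.

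The second step is purely combinatorial: identify each chord $c$ of $\lambda$ with the corresponding non-root vertex $v$ of the tree poset and check that the size of the subtree rooted at $v$ equals $|c|$. Under the nesting order the elements of $P_\lambda$ lying weakly above $c$ --- equivalently the vertices of the subtree rooted at $v$ --- are exactly the chords whose up and down steps are nested within those of $c$, including $c$ itself. Reading off the balanced subword delimited by the \op\ and \cp\ of $c$, these nested chords are in bijection with the up steps occurring between and including the ends of $c$, which is precisely the quantity $|c|$. Hence $\prod_{v\in P_\lambda}\qint{|\text{subtree rooted at }v|} = \prod_{\text{chords }c}\qint{|c|}$, and combining this with the two displays above yields \eqref{qart}.

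The genuine difficulty is not in this bookkeeping but in establishing the bijection $\DTs$ together with the identity $\art\circ\DTs=\inv$ that it must satisfy; that construction and verification are carried out in \sref{bijections} and are what make the whole argument bijective. Granting those facts, the remaining work --- invoking Björner--Wachs and matching subtree sizes to chord lengths --- is routine, and I would expect the only place to be careful is the orientation convention: nesting increases height and the root is the adjoined bottom element, so the subtree rooted at a chord collects exactly the chords nested inside it.
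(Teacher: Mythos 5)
Your proposal is correct and follows exactly the paper's own route: the proof of Theorem~\ref{thm:1} in \sref{bijections} likewise combines \tref{extension/tiling}, \tref{inv/art}, and the Bj\"orner--Wachs $q$-hook-length formula. Your explicit check that the subtree rooted at a chord $c$ has size $|c|$ is a detail the paper leaves implicit, but it is the same argument.
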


In turn, the $\DTr$ bijection implies
\begin{theorem}\label{descents}
\begin{equation}\label{zdis}
\sum_{\text{Dyck tilings $T\in\D(\lambda,*)$}}
 z^{\dis(T)} = \sum_{\sigma\in \L} z^{\des(\sigma)}.
\end{equation}
\end{theorem}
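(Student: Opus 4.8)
The plan is to deduce this directly from the $\DTr$ bijection, in exactly the way that \tref{thm:1} is deduced from the $\DTs$ bijection. Two facts about $\DTr$ are needed, both of which are established in \sref{bijections}: first, that for each fixed Dyck path $\lambda$ the map $\DTr(\lambda,\cdot)$ is a bijection from $\L$ onto $\D(\lambda,*)$; and second, that it carries the descent statistic to the discrepancy statistic, i.e.\ $\dis(\DTr(\lambda,\sigma))=\des(\sigma)$ for every $\sigma\in\L$.

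Granting these, the proof is a one-line change of summation variable. Writing $T=\DTr(\lambda,\sigma)$ and noting that $T$ ranges over all of $\D(\lambda,*)$ exactly once as $\sigma$ ranges over $\L$, we have
\[
\sum_{T\in\D(\lambda,*)} z^{\dis(T)}
=\sum_{\sigma\in\L} z^{\dis(\DTr(\lambda,\sigma))}
=\sum_{\sigma\in\L} z^{\des(\sigma)},
\]
where the last equality is the statistic-compatibility of $\DTr$. No further input is required.

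The content of the theorem therefore lies entirely in \sref{bijections}, and in particular in verifying the two properties of $\DTr$ above. The main obstacle is the second property, $\dis(\DTr(\lambda,\sigma))=\des(\sigma)$: one must show that the ribbon-growth procedure defining $\DTr$ produces an upper path $\mu$ whose discrepancy with $\lambda$ equals the number of descents of $\sigma$. I would approach this by induction on the insertion sequence $p_1,\dots,p_n$, tracking how each successive insertion alters the upper path and hence the running discrepancy, and arguing that the net change over all insertions equals $\des(\sigma)$. Bijectivity of $\DTr(\lambda,\cdot)$ should follow from the general bijection between insertion sequences $p_1,\dots,p_n$ and cover-inclusive Dyck tilings of order $n$, restricted to the fiber consisting of tilings with fixed lower path~$\lambda$.

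Unlike \tref{thm:1}, there is no product-formula simplification available here. The right-hand side $\sum_{\sigma\in\L} z^{\des(\sigma)}$ is a genuine descent polynomial of the labeled tree poset, and no analog of the Bj\"orner--Wachs formula collapses it to a quotient of $q$-factorials. The theorem is thus best read as a structural identity transported across the $\DTr$ bijection rather than as a closed-form evaluation, and all the substantive work is deferred to the construction and analysis of that bijection.
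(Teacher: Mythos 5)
Your proof is correct and is exactly the paper's argument: the paper derives Theorem~\ref{descents} immediately from Theorem~\ref{extension/tiling} (bijectivity of $\DTr(\lambda,\cdot)$ from $\L$ to $\D(\lambda,*)$) and Theorem~\ref{des/dis} ($\dis(\DTr(\lambda,\sigma))=\des(\sigma)$), via the same change of summation variable. Your sketched inductive strategy for establishing $\dis(\DTr(\lambda,\sigma))=\des(\sigma)$ by tracking the discrepancy change at each ribbon-growth step also matches the paper's proof of Theorem~\ref{des/dis}.
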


It is evident from the form of \eqref{qart} that if $\lambda_1$ and $\lambda_2$ are two Dyck paths
whose corresponding trees are isomorphic when we ignore their embeddings in the plane, then
$$ \sum_{T\in\D(\lambda_1,*)} q^{\art(T)} = \sum_{T\in\D(\lambda_2,*)} q^{\art(T)}.$$
The corresponding fact is not obvious for the $\dis$ statistic, but Stanley \cite[Thm.~9.1 and Prop.~14.1]{MR0332509}
proved that for any naturally labeled poset $(P,\omega)$, $$\sum_{\sigma\in\LL(P,\omega)}z^{\des(\sigma)}$$
is independent of the labeling $\omega$, so then it follows from
\eqref{zdis} that
$$ \sum_{T\in\D(\lambda_1,*)} z^{\dis(T)} = \sum_{T\in\D(\lambda_2,*)} z^{\dis(T)}.$$
These sums can be computed recursively using \cite[12.6(ii) and 12.2]{MR0332509}.

Experimentally the tiles statistic also appears to behave nicely in this fashion:
\begin{conjecture}
If $\lambda_1$ and $\lambda_2$ are two Dyck paths
whose corresponding trees are isomorphic when we ignore their embeddings in the plane, then
$$ \sum_{T\in\D(\lambda_1,*)} t^{\tiles(T)} = \sum_{T\in\D(\lambda_2,*)} t^{\tiles(T)}.$$
\end{conjecture}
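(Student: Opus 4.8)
The plan is to transport the statement to the permutation/matching side via the $\DTs$ bijection, where $\tiles$ has the transparent meaning recorded in \fref{objects}: for $\sigma\in\L$ the quantity $\tiles(\DTs(\lambda,\sigma))$ equals $\inv(w)$ for the min-word $w$, i.e.\ the number of nestings of the perfect matching $\match(p_1,\dots,p_n)$ attached to the labeled tree $(\P,\sigma)$. Since $\DTs(\lambda,\cdot)$ is a bijection from $\L$ onto $\D(\lambda,*)$, the conjecture is equivalent to the assertion that the multiset of nesting numbers of these matchings, as $\sigma$ ranges over $\L$, depends only on the abstract rooted tree underlying $\P$ and not on its planar embedding (the left-to-right ordering of children at each node).

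First I would reduce to a single elementary move. The planar embeddings of a fixed abstract rooted tree are acted on transitively by the product of symmetric groups permuting the children of each internal vertex, and adjacent transpositions generate these symmetric groups; hence it suffices to prove that $\sum_{T}t^{\tiles(T)}$ is unchanged when one swaps two adjacent sibling subtrees $T_a,T_b$ sharing a common parent. Concretely, let $\lambda_1$ and $\lambda_2$ be the two Dyck paths that agree everywhere except that the adjacent Dyck subwords encoding $T_a$ and $T_b$ occur in opposite orders.

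Then I would attempt an explicit $\tiles$-preserving bijection $\D(\lambda_1,*)\to\D(\lambda_2,*)$ by reflecting the two sibling regions. A cover-inclusive tiling decomposes over the region lying above the block of $T_a$ and the region above the block of $T_b$; tiles contained entirely in a single block transport verbatim and contribute the same count, so the whole difficulty is concentrated in the tiles that straddle the local valley between the two blocks, which occur exactly when the upper path $\mu$ arches over that valley. One hopes to re-route each straddling tile after the swap so that it remains a legal Dyck tile, cover-inclusivity is preserved, and the total number of tiles is fixed. An equivalent formulation lives on matchings: using the dictionary $p_i=\#\{j<i:\ell_j<\ell_i\}+\#\{j<i:r_j<\ell_i\}$ between planar positions and insertion data, one seeks a nesting-preserving recombination that exchanges the contributions of $T_a$ and $T_b$ as $\sigma$ runs over the shuffles of linear extensions of the two subtrees.

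The main obstacle is precisely this control of the straddling tiles, equivalently of the cross-block nestings. The reason the conjecture is harder than \tref{thm:1} and \tref{descents} is that $\tiles$ enjoys neither of their structural crutches: $\sum_T q^{\art(T)}$ factors as a product over subtree sizes through the Bj\"orner--Wachs formula, and $\sum_T z^{\dis(T)}$ is labeling-independent by Stanley's theory of $P$-partitions, whereas $\sum_T t^{\tiles(T)}$ admits no product formula at all (already for $\lambda=\mathrm{UDUUDUDD}$ it equals $1+2t+3t^2+t^3+t^4$, which is irreducible over $\mathbb{Q}$). Since $\tiles=2\art-\area$ with $\area$ constant on each shape, invariance would also follow from the stronger claim that the shape-refined joint distribution of $(\art,\area)$ over $\D(\lambda,*)$ is embedding-independent; but even the univariate $\tiles$ case appears to demand a genuinely new local argument, which is why the statement is left here as a conjecture.
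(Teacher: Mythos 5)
The statement you were asked to prove is stated in the paper as a \emph{conjecture}: the authors give no proof of it, and your write-up does not supply one either. Your preliminary reductions are correct and match what one would naturally do. Via $\DTs(\lambda,\cdot)$ the claim is equivalent to saying that the multiset of nesting numbers of the matchings $\match(p_1,\dots,p_n)$, as $\sigma$ ranges over $\L$, depends only on the abstract rooted tree and not on its planar embedding; and since reorderings of children are generated by adjacent transpositions of sibling subtrees, it suffices to treat a single such swap. Your side computation for $\lambda=\mathrm{UDUUDUDD}$ agrees with the thirteenth row of \fref{objects}, and your diagnosis of why the techniques behind \tref{thm:1} and \tref{descents} do not transfer (no Bj\"orner--Wachs-style product formula for $\tiles$, no Stanley-type labeling-invariance) is accurate.

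The genuine gap is that the entire mathematical content of the conjecture lives in the step you label ``the main obstacle'' and then leave unresolved: constructing a $\tiles$-preserving bijection $\D(\lambda_1,*)\to\D(\lambda_2,*)$ for an adjacent sibling swap, or equivalently a nesting-preserving recombination of the matchings. Saying that one ``hopes to re-route each straddling tile'' is not an argument --- the tiles arching over the valley between the two sibling blocks interact with cover-inclusivity in both blocks simultaneously, the two blocks generally have different shapes and different internal generating functions, and no rule for the re-routing is proposed, let alone verified to be well defined, injective, or tile-count-preserving. You candidly acknowledge at the end that a ``genuinely new local argument'' is needed and that this is why the statement is a conjecture; that is an honest assessment, but it means the proposal is a reduction plus a description of the difficulty, not a proof. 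As the paper itself offers no proof, there is no argument of theirs against which to measure yours; the correct conclusion is simply that the statement remains open after your attempt.
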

We have confirmed this equation by direct computation for all Dyck paths $\lambda_1$ and $\lambda_2$
of order~$8$ or less.

In contrast, the $\area$ statistic does not have this property, nor do
any of the joint statistics between $\art$, $\dis$, and $\tiles$.

The Dyck tilings $\DTs(p_1,\dots,p_n)$ and $\DTr(p_1,\dots,p_n)$ can
also be understood in terms of the perfect matching
$\match(p_1,\dots,p_n)$, as we discuss in \sref{bijections}.
We shall see, for example, that
$$ \dis(\DTr(p_1,\dots,p_n)) = \des(\matchmin(\match(p_1,\dots,p_n)))$$
and
$$ \tiles(\DTs(p_1,\dots,p_n)) = \inv(\matchmin(\match(p_1,\dots,p_n))).$$

An interesting special case for our bijections is when the lower path~$\lambda$ is the ``zig-zag''
path $$\zigzag_n=\text{UDUDUD}\ldots\text{UD}=(\U\rD)^n,$$ so that
$P_\lambda$ is the antichain, and $\L$ is the set of all permutations.
In this case, the bijection $\DTr$ restricts to the bijection between
permutations and Dyck tableaux in~\cite{abdh}, as discussed in \sref{tiling/tableau}.

We show in \sref{tiling/mad} that when $\lambda=\zigzag_n$,
$$ \art(\DTr(\zigzag_n,\sigma)) = \mad(\sigma),$$
where $\mad$ is a Mahonian statistic on permutations defined
in~\cite{CSZ} (which we review later).  The composition of the two
Dyck tiling bijections
$\DTs^{-1}(\zigzag_n,\cdot)\circ\DTr(\zigzag_n,\cdot)$ gives a
bijection mapping $\mad$ to $\inv$ which is different than the one
given in \cite{CSZ}.

If we further restrict to the case where both $\lambda=\zigzag_n$ and
all the Dyck tiles have size~1, then the Dyck tiling is determined by
its upper path $\mu$.  In \sref{tiling/231} we show that in this case,
both $\DTs(\zigzag_n,\cdot)$ and
$\DTr(\zigzag_n,\cdot)$ restrict to (classical) bijections from
permutations avoiding the pattern $231$ to Dyck paths.

\section{Dyck tiling bijections} \label{bijections}

\subsection{Bijections with increasing trees}
Recall that $\lambda/\mu$ denotes the skew shape between the lower
Dyck path $\lambda$ and upper Dyck path~$\mu$.  We choose coordinates
so that when $\lambda$ and $\mu$ are of order $n$, they start at
$(-n,0)$, each step is either $(+1,+1)$ or $(+1,-1)$, and the ending
location is $(+n,0)$.  We coordinatize a Dyck tile by the Dyck path
formed from the lower corners of the boxes contained within the Dyck
tile.  Column $s\in\Z$ refers to the set of points $(s,y)$.

Given a Dyck path $\rho$ and a column $s$, we define the
\textbf{spread} of $\rho$ at $s$ to be the Dyck path~$\rho'$ whose
points are
$$
\begin{aligned}
 \{ (x,y)&+(-1,0)&\!\!:\,(x,y)\in\rho\text{ and }x\leq s\}& \ \cup\\
 \{ (x,y)&+(0,1)&\!\!:\,(x,y)\in\rho\text{ and }x=s\}& \ \cup\\
 \{ (x,y)&+(+1,0)&\!\!:\,(x,y)\in\rho\text{ and }x\geq s\}&
 .
\end{aligned}
$$
Notice that the spread of $\rho$ at $s$ makes sense whether or not Dyck path $\rho$ overlaps column~$s$.

Given a Dyck path $\rho'$ and a column $s$, we define the
\textbf{contraction} of $\rho'$ at $s$ to be the Dyck path $\rho$
whose spread at $s$ is $\rho'$.  Not all Dyck paths $\rho'$ will have
a contraction at $s$.  If there is a contraction at $s$, then it is
unique.

We define the spread and contraction of a Dyck tile at column $s$ by
taking the spread or contraction of the Dyck path that coordinatizes
the Dyck tile.  We define the spread and contraction of a Dyck tiling
at column $s$ by taking the spread or contraction of the upper and
lower bounding Dyck paths as well as of all the Dyck tiles within the
tiling.

\newcommand{\harptextlength}{1.05in}
\begin{figure}[htbp]
\begin{center}
\tz{\includegraphics{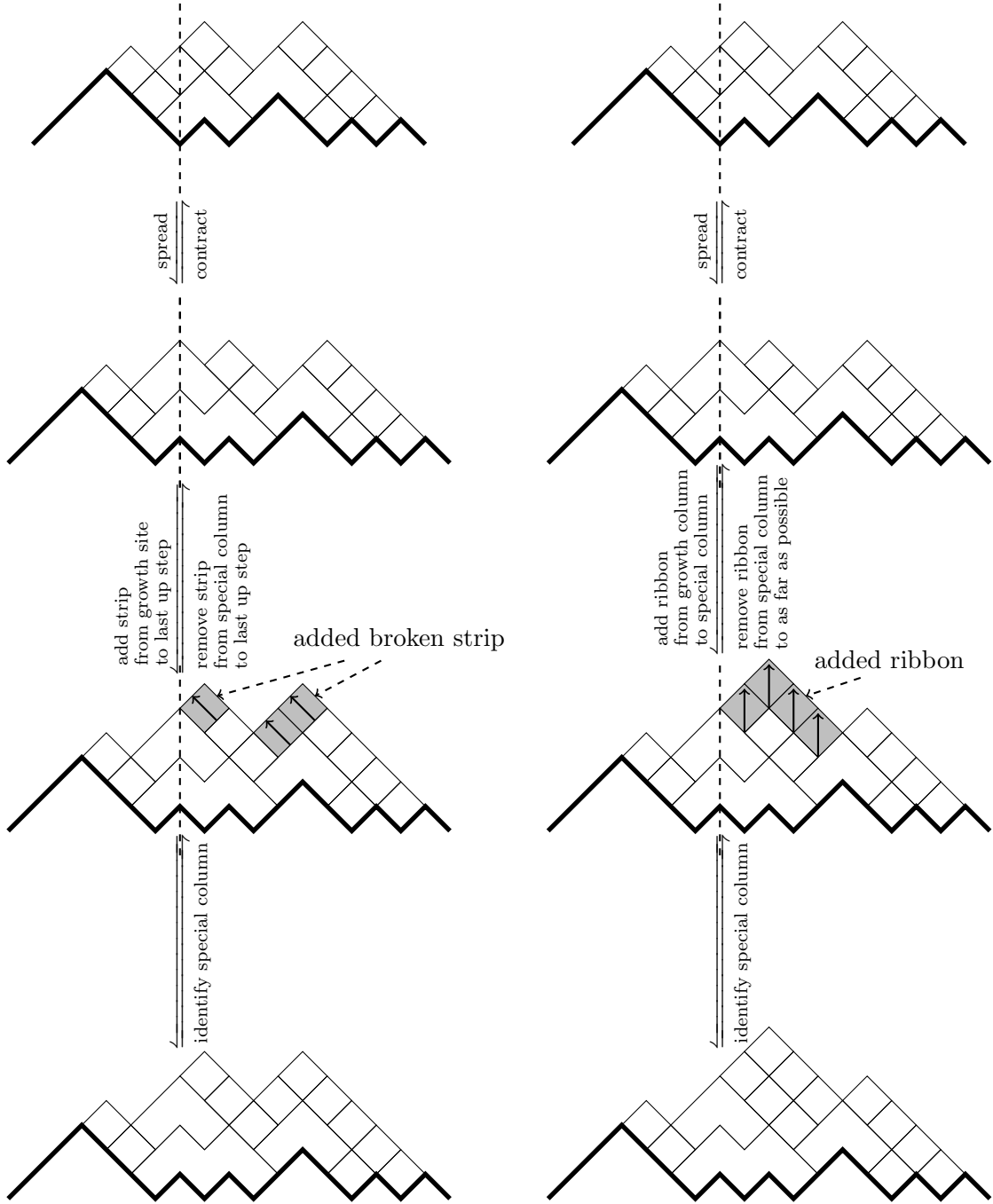}}{
\begin{tikzpicture}[rotate=45,scale=0.52]

\begin{scope}[shift={(11,-11)}]
\begin{scope}[shift={(-14.5,-13.5)}]

\node[rotate=0,fill=white] (R) at (11,-4) {\parbox{1.2in}{\small added ribbon}};

\draw[fill =lightgray] (6,-1) rectangle +(1,-1);
\draw[fill =lightgray] (7,-1) rectangle +(1,-1);
\draw[fill =lightgray] (7,-2) rectangle +(1,-1);
\draw[fill =lightgray] (7,-3) rectangle +(1,-1);

\draw[thick,->] (6,-2)--(6.9,-1.1);
\draw[thick,->] (7,-2)--(7.9,-1.1);
\draw[thick,->] (7,-3)--(7.9,-2.1);
\draw[thick,->] (7,-4)--(7.9,-3.1);

\Ribbon[shift={(6,-4)},fill=white]{2,-2};
\Ribbon[shift={(3,-2)},fill=white]{2,-1,1,-2};
\Ribbon[shift={(4,-1)},fill=white]{2,-2};

\Ribbon[shift={(7,-6)}]{1,-1};
\Ribbon[shift={(6,-3)}]{1,-1};
\Ribbon[shift={(6,-2)}]{1,-1};
\Ribbon[shift={(3,-1)}]{1,-1};
\Ribbon[shift={(3,0)}]{1,-1};
\Ribbon[shift={(8,-4)}]{1,-1};
\Ribbon[shift={(8,-5)}]{1,-1};
\Ribbon[shift={(8,-6)}]{1,-1};
\Ribbon[shift={(8,-7)}]{1,-1};

\draw [dashed,thick] (3,-4)--(7,0);

\draw[dashed,thick,->] (R)-- (8,-2.5);
\DyckPath[draw, line width =2pt]{3,-3,1,-1,1,-1,2,-2,1,-1,1,-1};

\node[rotate=90] at (1.3,-5.8){$\xleftrightharpoons[\text{identify special column}]{\text{\phantom{identify}}}$};
\end{scope}

\begin{scope}[shift={(-7,-6)}]
\draw [dashed,thick] (3,-4)--(7,0);

\Ribbon[shift={(6,-4)}]{2,-2};
\Ribbon[shift={(3,-2)},fill=white]{2,-1,1,-2};
\Ribbon[shift={(4,-1)},fill=white]{2,-2};
\Ribbon[shift={(7,-6)}]{1,-1};
\Ribbon[shift={(6,-3)}]{1,-1};
\Ribbon[shift={(6,-2)}]{1,-1};
\Ribbon[shift={(3,-1)}]{1,-1};
\Ribbon[shift={(3,0)}]{1,-1};
\Ribbon[shift={(8,-4)}]{1,-1};
\Ribbon[shift={(8,-5)}]{1,-1};
\Ribbon[shift={(8,-6)}]{1,-1};
\Ribbon[shift={(8,-7)}]{1,-1};

\draw [dashed,thick] (3,-4)--(7,0);

\DyckPath[draw, line width =2pt]{3,-3,1,-1,1,-1,2,-2,1,-1,1,-1};
\node[rotate=90] at (1.6,-5.5) {$\xleftrightharpoons[\text{\parbox{\harptextlength}{remove ribbon\\[-6pt]from special column\\[-6pt]to as far as possible}}]{\text{\parbox{\harptextlength}{add ribbon\\[-6pt]from growth column\\[-6pt] to special column}}}$};
\end{scope}

\begin{scope}[shift={(-22,-21)}]
\draw (6,-1) rectangle +(1,-1);
\draw (7,-1) rectangle +(1,-1);
\draw (7,-2) rectangle +(1,-1);
\draw (7,-3) rectangle +(1,-1);

\Ribbon[shift={(6,-4)},fill=white]{2,-2};
\Ribbon[shift={(3,-2)},fill=white]{2,-1,1,-2};
\Ribbon[shift={(4,-1)},fill=white]{2,-2};

\Ribbon[shift={(7,-6)}]{1,-1};
\Ribbon[shift={(6,-3)}]{1,-1};
\Ribbon[shift={(6,-2)}]{1,-1};
\Ribbon[shift={(3,-1)}]{1,-1};
\Ribbon[shift={(3,0)}]{1,-1};
\Ribbon[shift={(8,-4)}]{1,-1};
\Ribbon[shift={(8,-5)}]{1,-1};
\Ribbon[shift={(8,-6)}]{1,-1};
\Ribbon[shift={(8,-7)}]{1,-1};

\DyckPath[draw, line width =2pt]{3,-3,1,-1,1,-1,2,-2,1,-1,1,-1};
\end{scope}

\begin{scope}
\draw [dashed,thick] (2,-4)--(6,0);

\Ribbon[shift={(3,-2)}]{2,-2};
\Ribbon[shift={(5,-3)}]{2,-2};
\Ribbon[shift={(4,-1)}]{1,-1};

\Ribbon[shift={(6,-5)}]{1,-1};
\Ribbon[shift={(5,-2)}]{1,-1};
\Ribbon[shift={(5,-1)}]{1,-1};
\Ribbon[shift={(3,-1)}]{1,-1};
\Ribbon[shift={(3,0)}]{1,-1};
\Ribbon[shift={(7,-3)}]{1,-1};
\Ribbon[shift={(7,-4)}]{1,-1};
\Ribbon[shift={(7,-5)}]{1,-1};
\Ribbon[shift={(7,-6)}]{1,-1};

\DyckPath[draw, line width =2pt]{3,-3,1,-1,2,-2,1,-1,1,-1};
\node[rotate=90] at (1,-5){$\xleftrightharpoons[\text{contract}]{\text{spread}}$};
\end{scope}
\end{scope}
\begin{scope}
\begin{scope}[shift={(-14.5,-13.5)}]

\draw[fill =lightgray] (6,-1) rectangle +(1,-1);
\draw[fill =lightgray] (7,-3) rectangle +(1,-1);
\draw[fill =lightgray] (8,-3) rectangle +(1,-1);
\draw[thick,->] (6.5,-2)--(6.5,-1);
\draw[thick,->] (7.5,-4)--(7.5,-3);
\draw[thick,->] (8.5,-4)--(8.5,-3);

\node[rotate=0,fill=white] (R) at (12.3,-4.5) {\parbox{1.5in}{\small added broken strip}};
\draw[dashed,thick,->] (R)-- (7,-1.5);
\draw[dashed,thick,->] (R)-- (9,-3.5);

\Ribbon[shift={(6,-4)},fill=white]{2,-2};
\Ribbon[shift={(3,-2)},fill=white]{2,-1,1,-2};
\Ribbon[shift={(4,-1)},fill=white]{2,-2};

\Ribbon[shift={(7,-6)}]{1,-1};
\Ribbon[shift={(6,-3)}]{1,-1};
\Ribbon[shift={(6,-2)}]{1,-1};
\Ribbon[shift={(3,-1)}]{1,-1};
\Ribbon[shift={(3,0)}]{1,-1};
\Ribbon[shift={(8,-4)}]{1,-1};
\Ribbon[shift={(8,-5)}]{1,-1};
\Ribbon[shift={(8,-6)}]{1,-1};
\Ribbon[shift={(8,-7)}]{1,-1};

\DyckPath[draw, line width =2pt]{3,-3,1,-1,1,-1,2,-2,1,-1,1,-1};

\node[rotate=90] at (1.3,-5.8){$\xleftrightharpoons[\text{identify special column}]{\text{\phantom{identify}}}$};
\draw [dashed,thick] (3,-4)--(7,0);
\end{scope}

\begin{scope}[shift={(-7,-6)}]
\draw [dashed,thick] (3,-4)--(7,0);

\Ribbon[shift={(6,-4)}]{2,-2};
\Ribbon[shift={(3,-2)},fill=white]{2,-1,1,-2};
\Ribbon[shift={(4,-1)},fill=white]{2,-2};
\Ribbon[shift={(7,-6)}]{1,-1};
\Ribbon[shift={(6,-3)}]{1,-1};
\Ribbon[shift={(6,-2)}]{1,-1};
\Ribbon[shift={(3,-1)}]{1,-1};
\Ribbon[shift={(3,0)}]{1,-1};
\Ribbon[shift={(8,-4)}]{1,-1};
\Ribbon[shift={(8,-5)}]{1,-1};
\Ribbon[shift={(8,-6)}]{1,-1};
\Ribbon[shift={(8,-7)}]{1,-1};

\draw [dashed,thick] (3,-4)--(7,0);

\DyckPath[draw, line width =2pt]{3,-3,1,-1,1,-1,2,-2,1,-1,1,-1};
\node[rotate=90] at (1.2,-5.9){$\xleftrightharpoons[\text{\parbox{\harptextlength}{remove strip\\[-6pt]from special column\\[-6pt]to last up step}}]{\text{\parbox{\harptextlength}{add strip\\[-6pt]from growth site\\[-6pt] to last up step}}}$};
\end{scope}

\begin{scope}[shift={(-22,-21)}]
\draw (6,-1) rectangle +(1,-1);
\draw (7,-3) rectangle +(1,-1);
\draw (8,-3) rectangle +(1,-1);

\Ribbon[shift={(6,-4)},fill=white]{2,-2};
\Ribbon[shift={(3,-2)},fill=white]{2,-1,1,-2};
\Ribbon[shift={(4,-1)},fill=white]{2,-2};

\Ribbon[shift={(7,-6)}]{1,-1};
\Ribbon[shift={(6,-3)}]{1,-1};
\Ribbon[shift={(6,-2)}]{1,-1};
\Ribbon[shift={(3,-1)}]{1,-1};
\Ribbon[shift={(3,0)}]{1,-1};
\Ribbon[shift={(8,-4)}]{1,-1};
\Ribbon[shift={(8,-5)}]{1,-1};
\Ribbon[shift={(8,-6)}]{1,-1};
\Ribbon[shift={(8,-7)}]{1,-1};

\DyckPath[draw, line width =2pt]{3,-3,1,-1,1,-1,2,-2,1,-1,1,-1};
\end{scope}

\begin{scope}
\draw [dashed,thick] (2,-4)--(6,0);

\Ribbon[shift={(3,-2)}]{2,-2};
\Ribbon[shift={(5,-3)}]{2,-2};
\Ribbon[shift={(4,-1)}]{1,-1};

\Ribbon[shift={(6,-5)}]{1,-1};
\Ribbon[shift={(5,-2)}]{1,-1};
\Ribbon[shift={(5,-1)}]{1,-1};
\Ribbon[shift={(3,-1)}]{1,-1};
\Ribbon[shift={(3,0)}]{1,-1};
\Ribbon[shift={(7,-3)}]{1,-1};
\Ribbon[shift={(7,-4)}]{1,-1};
\Ribbon[shift={(7,-5)}]{1,-1};
\Ribbon[shift={(7,-6)}]{1,-1};

\DyckPath[draw, line width =2pt]{3,-3,1,-1,2,-2,1,-1,1,-1};
\node[rotate=90] at (1,-5){$\xleftrightharpoons[\text{contract}]{\text{spread}}$};
\end{scope}
\end{scope}
\end{tikzpicture}
}
\end{center}
\caption{The growth process for the $\DTs$ bijection (left column) and the $\DTr$ bijection (right column).  The spread and contract steps are the same for both bijections, as is the definition of the special column.  In the $\DTs$ bijection, a ``broken strip'' of one-box tiles is always added to each up step to the right of the growth site, which has the effect of pushing up-and-left the upper boundary (as indicated by the arrows).  In the $\DTr$ bijection, when the growth is at a column to the left of the special column, a ``ribbon'' of one-box tiles is added from the growth site to the special column, which has the effect of pushing up the upper boundary of the tiling (as indicated by the arrows).}
\label{columndel}
\end{figure}

\begin{figure}[htbp]
\vspace*{-24pt}
\begin{center}
\tz{\includegraphics[height=0.83\textheight]{growth-bijections.pdf}}{\includegraphics[height=0.83\textheight]{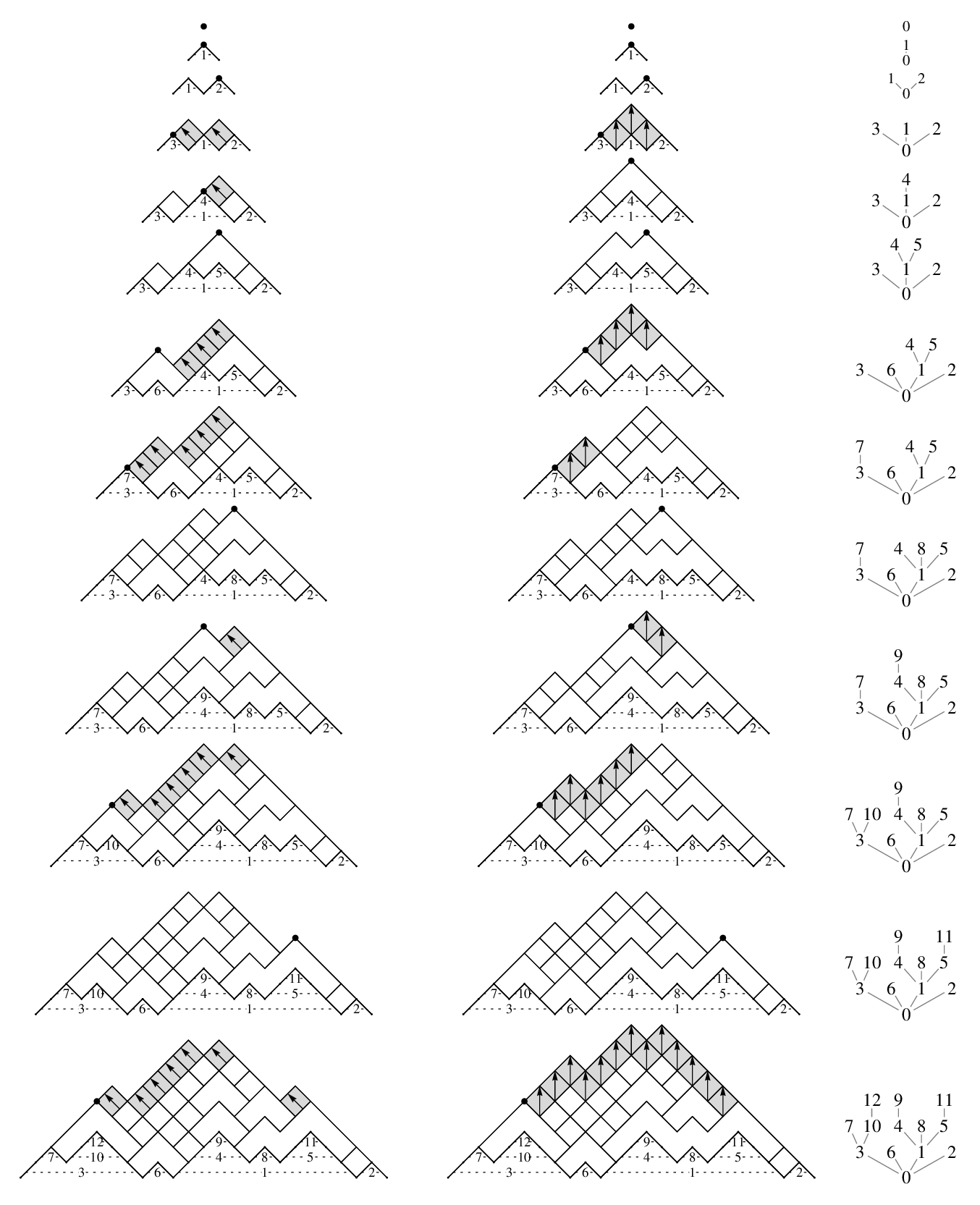}}
\end{center}
\vspace*{-18pt}
\caption{Example showing the bijections from increasing planted plane trees (linear extensions of a chord poset) to Dyck tilings, with the $\DTs$ bijection on the left, and the $\DTr$ bijection on the right.  The increasing planted plane tree is built up one vertex at a time in numerical order of the vertex labels, while the Dyck tilings are built up, starting from the empty Dyck tiling, by a sequence of the growth steps illustrated in \fref{columndel}, with newly added tiles shown in gray.  At each stage~$k$, the linear extension of the chord poset $P_{\lambda^{(k)}}$ is shown together with the $k$\th Dyck tiling, whose lower boundary is $\lambda^{(k)}$.  The preorder word of the final linear extension is $3, 7, 10, 12, 6, 1, 4, 9, 8, 5, 11, 2$, whose inverse is $6, 12, 1, 7, 10, 5, 2, 9, 8, 3, 11, 4$, which has $6$ descents and $34$ inversions.  The discrepancy between the upper and lower paths of the final tiling on the right is $6$, and $(\text{area}+\text{tiles})/2$ of the final tiling on the left is $34$.
}
\label{bijectionexample}
\end{figure}

Given a Dyck tiling $T$ of  $\lambda/\mu$, a column $s$ is \textbf{eligible} if:
\begin{enumerate}
\item The upper boundary $\mu$ contains an up step that ends in column $s$.
\item The intersection of $\mu$ with column $s$ is not the top corner of a Dyck tile of $T$ containing just one box.
\end{enumerate}
There is always at least one eligible column of a Dyck tiling $T$,
since the leftmost step of~$\mu$ ends at an eligible column.  We
define the \textbf{special column} of a Dyck tiling to be its
rightmost eligible column.

We now define two growth processes on Dyck tilings, one of which is used in the
$\DTs$ bijection, the other in the $\DTr$ bijection.  These processes
are very similar --- they both involve spreading the Dyck tiling at a
certain column $s$ and adding boxes to the right of the new column.
In the \textbf{strip-grow} process, we add a ``broken strip'' of
one-box tiles from the growth site to the right boundary of the tile,
so that a portion of $\mu$ is pushed up and left at a $45^\circ$
angle; see Figures~\ref{columndel} and~\ref{bijectionexample}.
In the \textbf{ribbon-grow}
process, we add a ``ribbon'' of one-box tiles from the growth site
right-wards to the special column, so that a portion of $\mu$ is
pushed up; see Figures~\ref{columndel} and~\ref{bijectionexample}.

Formally, given a Dyck tiling $T$ of order $n$, and a column $s$ such
that $-n\leq s \leq n$, we define \textbf{strip-grow}$(T,s)$
to be the Dyck tiling formed by first spreading $T$ at $s$
to get $T''$, and then adding a NE ``broken strip'' of one-box tiles to each
up (NE) step of the upper boundary of $T$ from the growth site until
the right boundary of $T''$ to obtain $T'=\sgrow(T,s)$.

Similarly, we define \textbf{ribbon-grow}$(T,s)$ to be
the Dyck tiling formed by first spreading~$T$ at $s$ to get $T''$,
and then if $T''$'s special column $Q$ is to the right of $s$, adding a
ribbon of one-box tiles on top of $T''$ at columns that are strictly
between columns~$s$ and~$Q$ to obtain $T'=\rgrow(T,s)$.
Notice that the upper boundary of
tiling $T''$ has a down step starting at column~$s$, and an up step
ending at column $Q$, so adding this ribbon of one-box tiles to~$T''$
results in a valid Dyck tiling $T'$.

If $T$ is cover-inclusive, then both $\sgrow(T,s)$ and $\rgrow(T,s)$
are also cover-inclusive.  The maps $\sgrow$ and $\rgrow$
are the \textbf{grow maps}.

Column $s$ is eligible for $T'$.  Because we added a strip or a
ribbon to the spread of $T$ to get $T'$, there are no eligible columns of $T'$
to the right of $s$, so $s$ is $T'$'s special column in both growth processes.

Given a cover-inclusive Dyck tiling $T'$ of order $n+1$, we now define
\textbf{strip-shrink}$(T')$ and \textbf{ribbon-shrink}$(T')$,
which we will show to be the inverses of the corresponding grow maps.
First, we identify $T'$'s special column $s$.  For the strip-shrink
map, we then remove all the one-box tiles to the right of $s$ that are
part of an up step in $T'$'s upper boundary.  This operation gives a
new cover-inclusive Dyck tiling $T''$ since the speciality of $s$
ensures that to the right of $s$ all up steps of $\mu$ are part of
one-box tiles, so removing them will keep the upper boundary a Dyck
path.  For the ribbon-shrink map, we find the column $r$ for which the
tiling $T'$ has one-box Dyck tiles on top of columns
$s+1,s+2,\dots,r-1$ but not on top of column~$r$.  (If there are no
such one-box Dyck tiles, then $r=s+1$.)  We then remove these one-box
Dyck tiles to obtain a new cover-inclusive Dyck tiling $T''$.

For both the strip-shrink and ribbon-shrink maps, since $s$ was special,
the upper boundary~$\mu'$ of $T'$ makes an up
step from $s-1$ to $s$.  Because $s+1$ was not eligible in $T'$,
either $\mu'$ makes a down step from column~$s$ to $s+1$, or it makes
an up step but there is a one-box Dyck tile on the top border,
which is then removed in $T''$.  In $T''$, the top tile in column~$s$
has a peak at $s$.  Because $T'$ is cover-inclusive, $T''$ is also
cover-inclusive, and so any tile of~$T''$ which intersects columns
$s-1$, $s$, or $s+1$ in fact intersects all three columns, and has a
peak at $s$.  Therefore $T''$ can be contracted at column~$s$ to obtain
a new cover-inclusive Dyck tiling~$T$.  The maps from $T'$ to $(T,s)$
are the \textbf{shrink maps}.

\begin{lemma} \label{lem:grow-shrink} The strip/ribbon-grow maps
  and the strip/ribbon-shrink maps are inverses and define a
  bijection from pairs $(T,s)$, where $T$ is a cover-inclusive Dyck
  tiling of order $n$ and $s$ is an integer between $-n$ and $n$
  inclusive, to cover-inclusive Dyck tilings $T'$ of order $n+1$.
\end{lemma}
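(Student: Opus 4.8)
The plan is to establish the lemma by exhibiting the grow and shrink maps as two-sided inverses. Since it has already been shown that $\sgrow$ and $\rgrow$ send a cover-inclusive tiling of order $n$ (together with a column $s$) to a cover-inclusive tiling of order $n+1$, and that the shrink maps send a cover-inclusive tiling of order $n+1$ to a pair $(T,s)$ with $T$ cover-inclusive of order $n$, it suffices to verify that the two composites $\text{shrink}\circ\text{grow}$ and $\text{grow}\circ\text{shrink}$ are both the identity; two maps between fixed sets whose composites in both orders are identities are automatically inverse bijections.

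First I would treat $\text{shrink}\circ\text{grow}$. Fix a pair $(T,s)$ and write $T'$ for $\sgrow(T,s)$ (resp. $\rgrow(T,s)$), with intermediate spread $T''=\text{spread}(T,s)$. The decisive fact, already established in the discussion above, is that $s$ is the special column of $T'$; hence the shrink map recovers the correct column. It then remains to show that the tile-removal step of shrink deletes exactly the broken strip (resp. ribbon) of one-box tiles that grow inserted, thereby returning $T''$, after which contracting $T''$ at $s$ returns $T$ because contraction is by definition the inverse of spread. For the strip case this uses that, $s$ being special, every up step of the upper boundary of $T'$ to the right of $s$ is the top of a one-box tile, and that these are precisely the inserted tiles; for the ribbon case it uses that the one-box tiles sitting on the consecutive columns $s+1,\dots,r-1$ are precisely the inserted ribbon.

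Next I would treat $\text{grow}\circ\text{shrink}$. Fix a cover-inclusive tiling $T'$ of order $n+1$, let $s$ be its special column, and let $T''$ be the tiling obtained from $T'$ by removing the relevant one-box tiles, so that the shrink image is $(T,s)$ with $T=\text{contract}(T'',s)$ and therefore $\text{spread}(T,s)=T''$. Applying the grow map to $(T,s)$ first reproduces $T''$ by spreading, and then re-adds a broken strip (resp. ribbon) of one-box tiles. I would check that these re-added tiles coincide with those just removed: in the strip case this is the same correspondence between up steps to the right of $s$ and their one-box tiles; in the ribbon case one checks that the column $r$ located by ribbon-shrink equals the special column $Q$ of $T''$ used by ribbon-grow, so the ribbon removed on columns $s+1,\dots,r-1$ is exactly the ribbon that grow reinstalls between $s$ and $Q$.

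The main obstacle is the bookkeeping at the special column: one must confirm that shrink removes neither too few nor too many one-box tiles, and that after their removal the intermediate tiling $T''$ is genuinely contractible at $s$. Both points rest on the definition of the special column as the rightmost eligible column, together with the cover-inclusive condition forcing every tile that meets any of the columns $s-1,s,s+1$ to meet all three with a peak at $s$ --- exactly the facts assembled in the paragraphs preceding the lemma. Once this tile-correspondence is confirmed, both composites are identities, so the grow and shrink maps are inverse bijections; the count is consistent, since the $2n+1$ admissible values $s\in\{-n,\dots,n\}$ match the $2(n+1)-1$ attachment sites available when the order increases from $n$ to $n+1$.
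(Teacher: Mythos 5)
Your proposal is correct and follows essentially the same route as the paper's proof: show both composites are identities, using that $s$ is the special column of $T'$ after growing, that the shrink step removes exactly the one-box tiles the grow step inserted (with the ribbon case requiring the identification of the first tile-free column with the special column of the spread), and that contraction inverts spreading. The paper just carries out the ribbon-case bookkeeping slightly more explicitly, splitting according to whether the special column of $T$ lies to the left or right of $s$, but the underlying argument is the one you describe.
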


\begin{proof}
  Suppose that we apply one of the grow maps to $(T,s)$ to get $T'$
  and then apply the corresponding shrink map to $T'$.  We already saw
  that $s$ is the special column of $T'$, which is then recovered by
  the shrink map.

  Consider the strip-grow followed by the strip-shrink map.  The
  addition of a one-box tile strip to the right of $s$ ensures that
  $s$ is the special column of $T'$ and thus shrinking $T'$
  results in removing that same strip of boxes.

  Consider the ribbon-grow followed by the ribbon-shrink map.  Let $q$
  denote $T$'s special column.  If $q\leq s$ then no new one-box Dyck
  tiles are added by the ribbon-grow map, column $s+1$ of $T'$ does
  not contain a one-box Dyck tile, and no one-box Dyck tiles are removed
  by the shrink map.  If $q>s$, then $T''$'s special column is $q+1$,
  and $q-s$ one-box Dyck tiles are added by the grow map, one in each
  of columns $s+1,\dots,q$.  Because $q+1$ is $T''$'s special column,
  there is no one-box Dyck tile of either $T''$ or $T'$ in column
  $q+1$.  Thus ribbon-shrink will remove precisely the one-box
  tiles that ribbon-grow added to columns $s+1,\dots,q$.

  In either the strip or ribbon cases, the shrink map removes
  precisely those one-box Dyck tiles that the grow map added.  The
  contraction of the shrink map undoes the spreading of the grow map,
  so shrinking $T'$ results in $(T,s)$.

  Next, suppose that we apply the shrink map to $T'$ to get $(T,s)$
  and then apply the grow map to $(T,s)$.  Column $s$ was $T'$'s
  special column.

  Consider the strip-shrink followed by the strip-grow map.  Since $s$
  was $T'$'s special column, all up steps of $T'$'s upper boundary to
  the right of $s$ are the top boundary of a one-box tile, which
  strip-shrink then removes.  Each of these up-steps still exist in
  $T''$ (translated by $(+1,-1)$) and in $T'$ (translated by $(0,-1)$).
  The strip-grow map then adds back the one-box tiles at these locations.

  Consider the ribbon-shrink followed by the ribbon-grow map.  Let $Q$
  denote the first column to the right of $s$ for which $T'$ does not
  have a one-box Dyck tile (such a column exists).  If $Q>s+1$, then
  upon removing the topmost one-box Dyck tiles in columns
  $s+1,\dots,Q-1$ to get cover-inclusive Dyck tiling $T''$, column $Q$
  is an eligible column of $T''$, and the rightmost such column (since
  $s<Q$ was special for $T'$), so $Q$ is $T''$'s special column.
  Upon shrinking, $Q-1>s$ is $T$'s special column, so the ribbon-grow
  map adds back in the one-box Dyck tiles to positions $s+1,\dots,Q-1$.
  Otherwise, $s=Q-1$ is $T''$'s special column, ribbon-shrink does not
  remove any one-box tiles, the special column of $T$ is $\leq s$,
  and the ribbon-grow map does not add any one-box tiles.

  In either the strip or ribbon cases, the spreading of the grow map
  undoes the contraction of the shrink map, and the grow map then adds
  one-box Dyck tiles precisely in the positions where the shrink map
  removed them, so growing $(T,s)$ results in $T'$.
\end{proof}

The bijections $\DTs$ and $\DTr$ are given by repeated application
of the strip-grow and ribbon-grow maps respectively.  More precisely,
we first do a minor change of coordinates,
$$ p_i = (i-1) + s_i,$$
so that $0\leq p_i\leq 2(i-1)$, and then define
$$
\DTs(p_1,\dots,p_n)  = \begin{cases} \varnothing & n=0, \\ \sgrow(\DTs(p_1,\dots,p_{n-1}),p_n-(n-1)) & n>0, \end{cases}
$$
and
$$
\DTr(p_1,\dots,p_n)  = \begin{cases} \varnothing & n=0, \\ \rgrow(\DTr(p_1,\dots,p_{n-1}),p_n-(n-1)) & n>0. \end{cases}
$$

\begin{theorem}
  \label{tilings_seq_bijection}
  The maps $\DTs$ and $\DTr$ are bijections from integer sequences
  $p_1,\dots,p_n$ with $0\leq p_i\leq 2(i-1)$ to cover-inclusive Dyck
  tilings of order $n$.
\end{theorem}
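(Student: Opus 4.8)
The plan is to bootstrap the theorem off of \lref{lem:grow-shrink}, which already establishes the single-step bijection between pairs $(T,s)$ of order $n$ and cover-inclusive Dyck tilings $T'$ of order $n+1$. The strategy is a straightforward induction on $n$, treating $\DTs$ and $\DTr$ in parallel since the only facts we need about the grow maps are packaged in \lref{lem:grow-shrink}, which covers both cases uniformly.

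First I would set up the correspondence between the two coordinate systems. Recall the change of variables $p_i = (i-1)+s_i$, so that the condition $0\le p_i\le 2(i-1)$ is exactly equivalent to $-(i-1)\le s_i\le i-1$; when we feed the $i$th term into the grow map at stage $i$, the tiling being grown has order $i-1$, so $s_i=p_i-(i-1)$ is precisely an integer between $-(i-1)$ and $i-1$, which is the range of columns eligible as a growth site for an order-$(i-1)$ tiling. Thus the recursive definitions of $\DTs$ and $\DTr$ are well-defined, and each step is an instance of a grow map applied to a legal $(T,s)$ pair.

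Next I would carry out the induction. The base case $n=0$ is the empty tiling, and the claim is trivial (the empty sequence maps to $\varnothing$). For the inductive step, assume that $\DTr$ (say) restricts to a bijection from sequences $p_1,\dots,p_{n-1}$ with $0\le p_i\le 2(i-1)$ onto cover-inclusive Dyck tilings of order $n-1$. Given a cover-inclusive Dyck tiling $T'$ of order $n$, \lref{lem:grow-shrink} produces a unique pair $(T,s)$ with $T$ cover-inclusive of order $n-1$ and $-(n-1)\le s\le n-1$, namely $(T,s)=\rgrow^{-1}(T')$ via the shrink map. By the inductive hypothesis $T$ corresponds to a unique sequence $p_1,\dots,p_{n-1}$, and setting $p_n=(n-1)+s$ recovers a unique legal term with $0\le p_n\le 2(n-1)$. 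Conversely, any legal sequence $p_1,\dots,p_n$ yields $T=\DTr(p_1,\dots,p_{n-1})$ by induction and then $T'=\rgrow(T,p_n-(n-1))$, and \lref{lem:grow-shrink} guarantees these two passages are mutually inverse. This shows $\DTr$ is a bijection at order $n$; the identical argument with $\sgrow$ in place of $\rgrow$ handles $\DTs$.

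The argument is essentially bookkeeping once \lref{lem:grow-shrink} is in hand, so there is no serious obstacle; the only point demanding care is verifying that the range of the growth column $s$ matches the range of the $p_n$ coordinate at each stage, i.e.\ that the grow map is never asked to act outside the domain guaranteed by the lemma. This is exactly the content of the coordinate change $p_i=(i-1)+s_i$, and I would state it explicitly to make the induction airtight. The fact that cover-inclusiveness is preserved at every step (so that the inductive hypothesis applies to $T$) is likewise already furnished by \lref{lem:grow-shrink} together with the remark in the text that $\sgrow$ and $\rgrow$ send cover-inclusive tilings to cover-inclusive tilings.
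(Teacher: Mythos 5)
Your proposal is correct and follows exactly the paper's route: the paper proves this theorem with the single line ``Immediate from Lemma~\ref{lem:grow-shrink},'' and your write-up simply makes explicit the induction and the coordinate check $-(i-1)\le p_i-(i-1)\le i-1$ that the paper leaves implicit. Nothing is missing and nothing is different in substance.
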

\begin{proof}
  Immediate from \lref{lem:grow-shrink}.
\end{proof}

\subsection{Comparison of statistics}
Next we compare the bijections $\DTs$ and $\DTr$ to the bijection from
integer sequences $p_1,\dots,p_n$ to increasing planted plane trees.
The strip-grow and ribbon-grow maps introduce a new chord in the lower
boundary of the Dyck tiling; the existing chords may be stretched, but
their relative order is unchanged.  (By induction, the lower
boundaries of $\DTs(p_1,\dots,p_n)$ and $\DTr(p_1,\dots,p_n)$ are the
same.)  If we keep track of the order in which the chords are
introduced, the result is a natural labeling $L$ of the chord poset
$P_\lambda$ of the lower boundary~$\lambda$, which together comprise
an increasing planted plane tree, as shown in
\fref{bijectionexample}.  In fact, this increasing planted plane
tree is the one given by the standard bijection from sequences
$p_1,\dots,p_n$ to increasing planted plane trees.
We can represent an increasing planted plane tree by a Dyck path
$\lambda$ and a permutation $\sigma\in\L$, as shown in \fref{extension}.
Given such a pair $(\lambda,\sigma)$, we define
$$\DTs(\lambda,\sigma) = \DTs(\text{sequence $p_1,\dots,p_n$ which yields labeled tree defined by $(\lambda,\sigma)$})$$
and
$$\DTr(\lambda,\sigma) = \DTr(\text{sequence $p_1,\dots,p_n$ which yields labeled tree defined by $(\lambda,\sigma)$}).$$
We therefore obtain the following theorem:
\begin{theorem}
  \label{extension/tiling}
  For each Dyck path $\lambda$, the maps $\DTs(\lambda,\cdot)$ and
  $\DTr(\lambda,\cdot)$ are bijections from linear extensions in $\L$
  to cover-inclusive Dyck tilings with lower path~$\lambda$.
\end{theorem}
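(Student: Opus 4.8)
The plan is to obtain both maps as restrictions of the global bijections of \tref{tilings_seq_bijection}, by threading the growth sequences $p_1,\dots,p_n$ through the standard bijection to increasing plane oriented trees. Recall the three objects in play: the set $S_n$ of admissible sequences $p_1,\dots,p_n$ with $0\le p_i\le 2(i-1)$; the set of cover-inclusive Dyck tilings of order $n$, to which \tref{tilings_seq_bijection} identifies $S_n$ via $\DTs$ (and separately via $\DTr$); and the set of increasing plane oriented trees on $n+1$ vertices, which the standard attachment-site bijection identifies with $S_n$, and which we encode as pairs $(\lambda,\sigma)$ with $\sigma\in\LL(P_\lambda)$. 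The maps $\DTs(\lambda,\cdot)$ and $\DTr(\lambda,\cdot)$ are defined by first passing from $(\lambda,\sigma)$ to the sequence $p_1,\dots,p_n$ and then applying $\DTs$ or $\DTr$; since the attachment-site bijection is a bijection, these composite maps are well defined on all of $\LL(P_\lambda)$.

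The key step is to show that the lower path of $\DTs(p_1,\dots,p_n)$ and of $\DTr(p_1,\dots,p_n)$ is exactly the Dyck path $\lambda$ whose chord poset $P_\lambda$ underlies the increasing plane oriented tree assigned to $p_1,\dots,p_n$ by the standard bijection. I would prove this by induction on $n$. The inductive hypothesis says that after processing $p_1,\dots,p_{n-1}$ the lower boundary is the Dyck path of the tree $L^{(n-1)}$, and that the chords of this path, listed in order of creation, realize the natural labeling recorded by the attachment sites. For the inductive step one checks the two points asserted in the paragraph preceding the theorem: that each grow step ($\sgrow$ or $\rgrow$) inserts exactly one new chord into the lower boundary, and that it neither destroys nor reorders the existing chords --- it only stretches them. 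One then verifies that the nesting position of the new chord among the old ones is governed by $p_n$ in precisely the same way that the attachment site $p_n$ determines where vertex $n$ is grafted onto $L^{(n-1)}$; equivalently, that the spread-then-grow operation realizes the ``just before / just after / on top of'' insertions matching the $2n-1$ attachment locations. Because the spread and contract operations are common to both growth processes and only the one-box tiles differ, the same lower-boundary bookkeeping applies verbatim to $\DTs$ and $\DTr$, so both have identical lower paths, namely $\lambda$.

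Granting this, the theorem follows by restriction. Fix $\lambda$. Under the standard bijection the admissible sequences whose tree has chord poset $P_\lambda$ are exactly those indexed by $\sigma\in\LL(P_\lambda)=\L$, and by the key step these are precisely the sequences that $\DTs$ (resp.\ $\DTr$) sends to cover-inclusive Dyck tilings with lower path $\lambda$. Since $\DTs$ (resp.\ $\DTr$) is a bijection from $S_n$ onto all cover-inclusive Dyck tilings of order $n$ by \tref{tilings_seq_bijection}, restricting it to this distinguished subset yields a bijection from $\L$ onto the cover-inclusive Dyck tilings with lower path $\lambda$, which is the claim.

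The main obstacle is the key step: making rigorous the assertion that the chord introduced by one grow step sits in the nesting position dictated by $p_n$, and that the relative order of previously created chords is preserved under spreading and tile addition. This requires a careful, case-based reading of how $\sgrow$ and $\rgrow$ modify the lower boundary near the growth column --- in particular tracking the effect of the spread at column $s=p_n-(n-1)$ on the endpoints $(\ell_i,r_i)$ of existing chords and confirming it agrees with the explicit formula $p_i=\#\{j<i:\ell_j<\ell_i\}+\#\{j<i:r_j<\ell_i\}$ for the attachment sites. Everything else is formal composition of bijections.
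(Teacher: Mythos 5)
Your proposal is correct and follows essentially the same route as the paper: the paper likewise derives this theorem by combining \tref{tilings_seq_bijection} with the observation that each grow step inserts one new chord into the lower boundary without reordering the existing chords, so that tracking the insertion order recovers exactly the increasing plane oriented tree given by the standard attachment-site bijection, and then restricting to the fiber over a fixed $\lambda$. The ``key step'' you flag as the main obstacle is precisely the content the paper states (and leaves at the level of the same inductive bookkeeping you describe) in the paragraph preceding the theorem.
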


Next we compare statistics of the Dyck tiling $T$ to statistics of the
permutation $\sigma$.  To do this, it is convenient to view an
increasing planted plane tree as having its edges labeled rather than its
vertices, and when the tree is represented by a Dyck path $\lambda$,
for the labels to reside on the up steps of $\lambda$.  The preorder
word ($\sigma^{-1}$) is the listing of these labels in order.  Both
grow maps will insert a new chord into $\lambda$ at a position $s$.
Let $n$ be the order of Dyck path $\lambda$, and let $\lambda'$ and
$\sigma'$ be the Dyck path and linear extension associated with
$\sgrow(T,s)$ or $\rgrow(T,s)$.  The word $(\sigma')^{-1}$ is just
$\sigma^{-1}$ with $n+1$ inserted at the location which is the number
of up steps of $\lambda$ to the left of $s$.

\begin{theorem}
  \label{inv/art}
  For each Dyck path $\lambda$ and linear extension $\sigma\in\L$,
  $$ \art(\DTs(\lambda,\sigma)) = \inv(\sigma).$$
\end{theorem}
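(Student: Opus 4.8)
The plan is to induct on the order $n$ of $\lambda$, showing that a single application of the grow map changes $\art$ and $\inv$ by the same amount. The base case $n=0$ is immediate: $\DTs$ of the empty sequence is the empty tiling, with $\art=0$, and the empty permutation has $\inv=0$. For the inductive step I would write $T=\DTs(\lambda,\sigma)$ and $T'=\sgrow(T,s)=\DTs(\lambda',\sigma')$, where $s$ is the growth site and $\sigma'$ is the linear extension recording the newly inserted chord. By the induction hypothesis $\art(T)=\inv(\sigma)$, so it suffices to show
$$\art(T')-\art(T)=\inv(\sigma')-\inv(\sigma),$$
and I will argue that both sides equal the number $k$ of up steps of $\lambda$ lying strictly to the right of $s$.

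The inversion side is the cleaner of the two, and I would dispatch it first. As recorded in the discussion preceding the theorem, the preorder word $(\sigma')^{-1}$ is obtained from $\sigma^{-1}$ by inserting the value $n+1$ at the position equal to the number of up steps of $\lambda$ to the left of $s$; equivalently, $n+1$ lands with exactly $k$ entries to its right. Since $n+1$ is the largest entry, it forms an inversion with each of those $k$ entries and with none of the entries to its left, so $\inv((\sigma')^{-1})-\inv(\sigma^{-1})=k$. Because $\inv(\pi)=\inv(\pi^{-1})$ for every permutation $\pi$, this yields $\inv(\sigma')-\inv(\sigma)=k$.

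The area side is the geometric heart of the argument, and where I expect the real work to lie. I would use the reformulation $\art(T)=\sum_{\text{tiles }t}\tfrac{\area(t)+1}{2}$, which is valid because every Dyck tile has odd area, together with the decomposition of $\sgrow$ into a spread at $s$ followed by the addition of the broken strip of one-box tiles. Spreading sends each tile to a tile and fixes the tile count, while raising the area of a given tile by an \emph{even} amount (its coordinatizing path gains one up--down pair precisely when it crosses column $s$); the broken strip then inserts one one-box tile along each relevant up step, each such tile contributing $1$ to $\art$. The crux is to track these two contributions column by column and show that their total is exactly $k$, using the cover-inclusive structure guaranteed by \lref{lem:grow-shrink} together with the fact that, after spreading, $s$ is the special column of $T'$, which pins down where new boxes may appear relative to the up steps of $\lambda$.

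The main obstacle is precisely this last bookkeeping: separating the even area increase coming from the spread of the pre-existing tiles from the area and tile count contributed by the freshly added broken strip, and verifying that the two together sum to the number of up steps of $\lambda$ to the right of $s$ rather than, say, up steps of the upper path. Once this count is established, combining it with the inversion computation and the induction hypothesis gives $\art(T')=\inv(\sigma')$, completing the induction. (Theorem~\ref{thm:1} then follows, as indicated earlier, by summing $q^{\art}$ over $\D(\lambda,*)$ and invoking the Bj\"orner--Wachs $q$-analogue of the tree hook-length formula.)
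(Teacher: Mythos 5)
Your overall strategy is exactly the paper's: induct on the order, and show that a single application of $\sgrow(\cdot,s)$ increases both $\art$ and $\inv$ by the number $k$ of up steps of $\lambda$ strictly to the right of $s$. The inversion half of your argument is complete and correct, and matches the paper's. The gap is that the art half remains a plan rather than a proof: you correctly observe that each pre-existing tile meeting column $s$ gains an even number of boxes under spreading (in fact exactly two, since its coordinatizing path acquires one new peak) and that each added one-box tile contributes $1$ to $\art$, but you then stop at ``the crux is to track these two contributions\dots and show that their total is exactly $k$,'' which is precisely the statement that needs proving. As written, the induction does not close.

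The missing step is, however, a two-line height computation rather than delicate per-column bookkeeping, and neither cover-inclusivity nor the location of the special column enters into it. Writing $\rho_s$ for the height of a Dyck path $\rho$ at column $s$ (with endpoints at $(\pm n,0)$): the tiles of $T$ meeting column $s$ are in bijection with the $(\mu_s-\lambda_s)/2$ boxes of $\lambda/\mu$ in that column (a ribbon tile has one box per column it meets), so the spread contributes $(\mu_s-\lambda_s)/2$ to $\art$; the broken strip adds one one-box tile per up step of $\mu$ to the right of column $s$, of which there are $(n-s-\mu_s)/2$, each contributing $(1+1)/2=1$ to $\art$. Hence
$$\art(\sgrow(T,s))-\art(T)=\frac{\mu_s-\lambda_s}{2}+\frac{n-s-\mu_s}{2}=\frac{n-s-\lambda_s}{2},$$
which is exactly the number of up steps of $\lambda$ to the right of $s$: the dependence on the upper path cancels, answering your worry about counting up steps of $\mu$ instead of $\lambda$. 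This is the same computation the paper performs, phrased there as $\tiles(T')-\tiles(T)=(n-s-\mu_s)/2$ and $\area(T')-\area(T)=\mu_s-\lambda_s+(n-s-\mu_s)/2$. With this identity inserted, your argument coincides with the paper's proof.
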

\begin{proof}
By the above discussion,
$$ \inv((\sigma')^{-1}) - \inv(\sigma^{-1}) = \text{\# up steps of $\lambda$ to the right of $s$}.$$
Notice that with $T'=\sgrow(T,s)$, we have $\tiles(T')-\tiles(T)=(n-s-\mu_s)/2$, which is the number
of up steps of $\mu$ to the right of
column~$s$, and that $\area(T')-\area(T)=\mu_s-\lambda_s+(n-s-\mu_s)/2$.
(Here $\rho_s$ denotes the height of Dyck path $\rho$ in column $s$.)
In other words,
$\art(T')-\art(T) = (n-s-\lambda_s)/2$,
which we can write as
$$
\art(\sgrow(T,s))-\art(T) = \text{\# up steps of $\lambda$ to the right of $s$}.
$$
Upon combining these equations and using induction,
and using the fact that $\inv(\sigma^{-1})=\inv(\sigma)$,
the theorem follows.
\end{proof}
\begin{theorem}
  \label{des/dis}
  For each Dyck path $\lambda$ and linear extension $\sigma\in\L$,
  $$ \dis(\DTr(\lambda,\sigma)) = \des(\sigma).$$
\end{theorem}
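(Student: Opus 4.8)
The plan is to argue by induction on the order $n$, exploiting the inductive definition $\DTr(\lambda',\sigma')=\rgrow(\DTr(\lambda,\sigma),s)$, where $(\lambda',\sigma')$ is obtained from $(\lambda,\sigma)$ by inserting a new chord carrying the largest label $n+1$ at the growth column $s$. The base case $n=0$ is the empty tiling and the empty permutation, for which both sides vanish. For the inductive step it suffices to show that $\dis$ and $\des$ change in lockstep, i.e.\ that
$$\dis(\DTr(\lambda',\sigma'))-\dis(\DTr(\lambda,\sigma)) = \des(\sigma')-\des(\sigma),$$
and each side will turn out to be either $0$ or $1$.

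First I would handle the permutation side. By the discussion preceding \tref{inv/art}, the preorder word $(\sigma')^{-1}$ is obtained from $\sigma^{-1}$ by inserting the value $n+1$ at position $k+1$, where $k$ is the number of up steps of $\lambda$ to the left of $s$. Reading $\sigma'$ as a word, this amounts to incrementing every value of $\sigma$ that exceeds $k$ and then appending the value $k+1$ at the end. Since the order-preserving relabeling of the first $n$ entries preserves all descents among them, the only possible new descent is at the junction with the appended entry, and a short check gives $\des(\sigma')=\des(\sigma)+[\sigma_n>k]$. In words: a descent is created exactly when $n+1$ is inserted into the preorder word weakly to the left of the current position of the label $n$ (note that $\sigma_n$ is precisely that position, since the position of the value $n$ in $\sigma^{-1}$ equals $\sigma_n$).

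Next I would handle the tiling side. Here $\rgrow(T,s)$ first spreads $T$ at $s$ and then, if the special column of the spread tiling lies to the right of $s$, adds a ribbon of one-box tiles between them. Two geometric facts drive the computation. (i) Spreading preserves $\dis$: it inserts a matching up step and down step (a new length-one peak) into both the lower and the upper boundary at column $s$, so it creates no new location where the lower path descends while the upper path ascends, and it disturbs no existing such location. (ii) Adding the ribbon changes $\dis$ by exactly $+1$: the ribbon turns the down step of the upper boundary immediately to the right of $s$ into an up step, while the lower boundary still descends there (the newly inserted chord is a peak at $s$), contributing one new discrepancy; a check then shows that the reshaping of the upper boundary between $s$ and the special column produces no other net change. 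Combining (i) and (ii) with \lref{lem:grow-shrink}, which pins down exactly when a ribbon is added, gives $\dis(\DTr(\lambda',\sigma'))-\dis(\DTr(\lambda,\sigma)) = [q>s]$, where $q$ is the special column of $T=\DTr(\lambda,\sigma)$.

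Finally I would match the two conditions $[\sigma_n>k]$ and $[q>s]$, and this is where I expect the main difficulty. The crucial observation is that, because each grow map always makes its growth column the special column of its output, the special column $q$ of $T$ is exactly the growth site of the previous insertion, namely that of the label $n$; consequently the number of up steps to the left of $q$ records the preorder position $\sigma_n$ of the label $n$. Comparing the new growth column $s$ to $q$ then becomes the same comparison as comparing the insertion position of $n+1$ to that of $n$ in the preorder word, which is exactly the event $\sigma_n>k$. The hard part is to make this dictionary precise and to settle the ``off-by-one'' conventions (what ``up steps to the left of $s$'' counts, and whether the peak's own up step is included), so that the inequalities $[\sigma_n>k]$ and $[q>s]$ really do coincide, including in the boundary case $s=q$, where no ribbon is added and no descent is created. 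Once this is done the induction closes, and \tref{descents} follows by summing over $\sigma\in\L$.
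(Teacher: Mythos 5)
Your proposal is correct and follows essentially the same route as the paper's own proof: induct along the $\rgrow$ process, observe that spreading preserves $\dis$ while a ribbon adds exactly one discrepancy (between columns $s$ and $s+1$), and match "a ribbon is added'' with "a new descent is created'' by comparing the growth column to the previous special column. The off-by-one check you flag does close as you describe (both conditions reduce to whether an up step of $\lambda$ ends in a column of $(s,q]$), and the paper itself dispatches that equivalence in a single sentence.
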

\begin{proof}
  Let $T'=\rgrow(T,s)$.  By the above discussion, we see that the
  $n+1$ occurs before the $n$ in $\sigma'$ iff $T'$'s special column
  $s$ is smaller than $T$'s special column.  Thus $$
  \des(\sigma')-\des(\sigma) = \begin{cases} 1 & \text{$\rgrow$ map
      added a ribbon} \\ 0 & \text{otherwise.} \end{cases}$$ Notice
  that spreading $T$ at $s$ does not change the discrepancy between
  the lower and upper paths.  If the ribbon-grow map added a ribbon to
  the spread of $T$ at $s$, then $T'$ has one extra location, the
  place between $s$ and $s+1$, where the upper path $\mu'$ goes up
  while the lower path $\lambda'$ goes down.  Thus $\dis(T')-\dis(T) =
  \des(\sigma')-\des(\sigma)$.  Induction completes the proof.
\end{proof}

\begin{proof}[Proof of \tref{thm:1}]
  Immediate from \tref{extension/tiling} and \tref{inv/art} and the
  theorem of Bj\"orner and Wachs \cite{BW} on the $q$-hook-length
  formula for the $q$-distribution of the inversion statistic of
  linear extensions.
\end{proof}

\begin{proof}[Proof of \tref{descents}]
Immediate from \tref{extension/tiling} and \tref{des/dis}.
\end{proof}

\section{\textit{Histoires d'Hermite}}

Next we compare the growth of the perfect matching
$\match(p_1,\dots,p_n)$ with the Dyck tilings $\DTs(p_1,\dots,p_n)$
and $\DTr(p_1,\dots,p_n)$.  The shape of a perfect matching of
$\{1,\dots,2n\}$ is the Dyck path of order $n$ which has an up-step at
the location of the smaller item in each pair, and a down-step at the
location of the larger items.  For each pair $(a,b)$ of the matching,
we can record the number of other pairs $(c,d)$ which nest it, i.e.,
for which $c<a<b<d$.  If we record these nesting numbers on the
down-steps of the Dyck path, the resulting labeled Dyck path is called
an \textbf{\textit{histoire d'Hermite}}, and the perfect matching can
be recovered from it.  The down-steps can also be labeled according to
crossings, i.e., the number of other pairs $(c,d)$ for which
$a<c<b<d$, but for our purposes it is more convenient to work with
nestings, and to record the nesting numbers on the up steps of the
Dyck path.

\begin{figure}[htbp]
\vspace*{-24pt}
\begin{center}
\tz{\includegraphics[height=.98\textheight]{hermite.pdf}}{\includegraphics[height=.98\textheight]{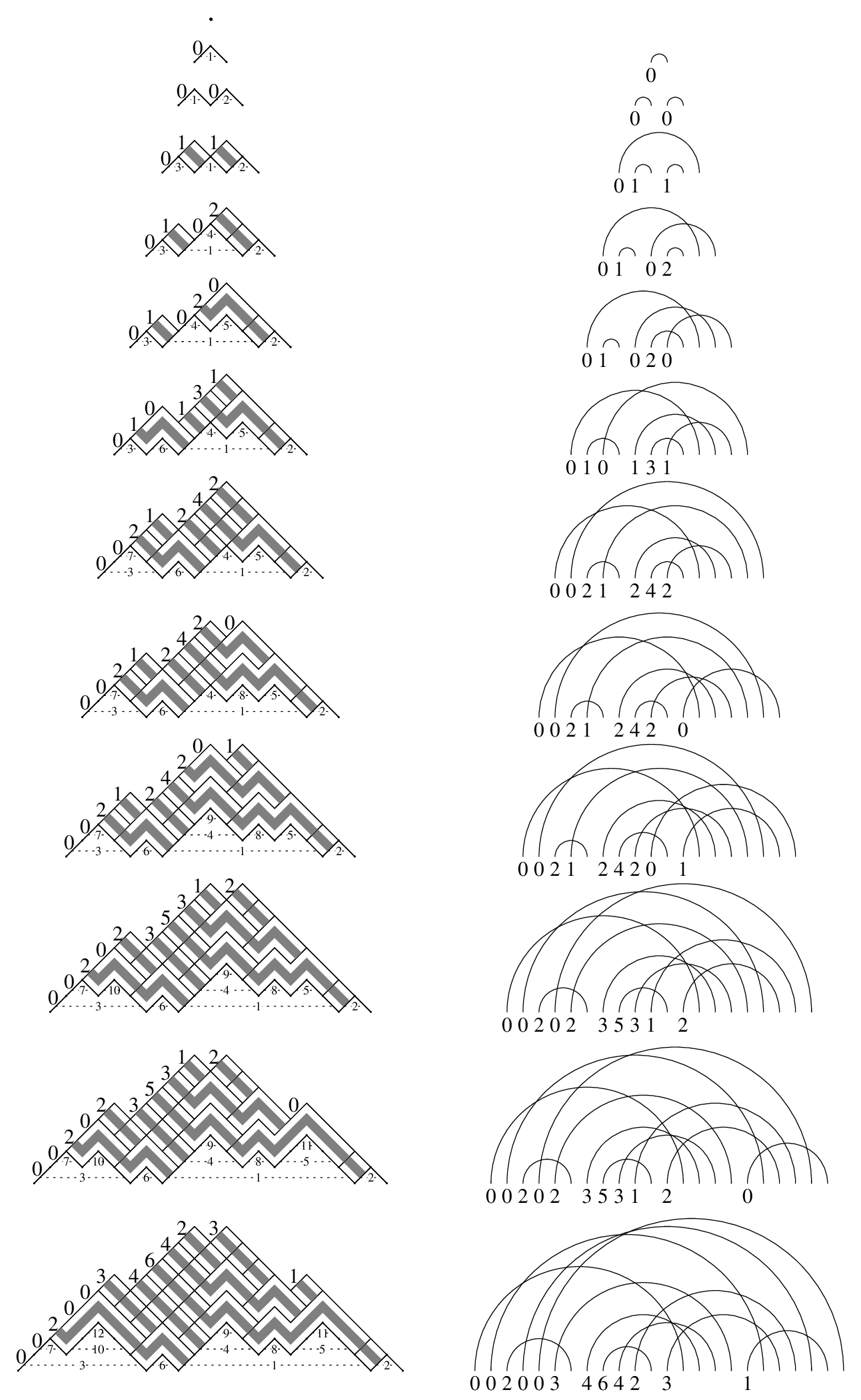}}
\end{center}
\vspace*{-18pt}
\caption{
The growth of the DTS Dyck tiling from \fref{bijectionexample} (on left) together with its corresponding perfect matching (on right).  The \textit{histoire d'Hermite\/} is shown with the perfect matching, where the numbers count nestings, and with the Dyck tiling, where the numbers count tiles.
}
\label{growth-hermite}
\end{figure}

Kim \cite{kim} and Konvalinka
showed how to transform a cover-inclusive Dyck tiling
into an \textit{histoire d'Hermite}, which we illustrate in
\fref{growth-hermite} without defining it formally.  Each number on the
up step counts the number of tiles of the tiling which are encountered
by the exploration process of gray paths (shown in
\fref{growth-hermite}), and each tile is encountered exactly once.

\begin{theorem}
  The \textit{histoire d'Hermite\/} arising from exploring
  $\DTs(p_1,\dots,p_n)$ from the left is the same as the
  \textit{histoire d'Hermite\/} arising from $\match(p_1,\ldots,p_n)$
  resulting from recording the nesting numbers on the up steps.
\end{theorem}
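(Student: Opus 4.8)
The plan is to prove that both constructions satisfy the \emph{same} one-step recursion and then induct on $n$, exploiting that $\match(p_1,\dots,p_n)$ and $\DTs(p_1,\dots,p_n)$ are each obtained from their order-$(n-1)$ predecessors by a single elementary operation (adjoining a pair, respectively a $\sgrow$ step). The base case $n=0$ is the empty histoire on both sides, so the whole argument reduces to identifying the two growth rules on histoires d'Hermite.

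First I would analyze the matching side. Passing from $\match(p_1,\dots,p_{n-1})$ to $\match(p_1,\dots,p_n)$ increments every label exceeding $p_n$ and adjoins the pair $(p_n+1,2n)$. Since $2n$ is the global maximum, the new pair is nested by no other pair, so its up step, sitting at position $p_n+1$ (that is, after the first $p_n$ steps of the histoire), receives nesting number $0$; moreover the new pair nests exactly those pairs whose smaller element exceeds $p_n+1$, i.e.\ exactly the pairs whose up steps lie strictly to its right. Hence, recording nestings on up steps, the histoire d'Hermite of $\match(p_1,\dots,p_n)$ is obtained from that of $\match(p_1,\dots,p_{n-1})$ by inserting a new up step labeled $0$ after the first $p_n$ steps, adding $1$ to the label of every up step to its right, and appending a down step at the far right.

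Next I would show that the gray-path exploration of $\DTs(p_1,\dots,p_n)=\sgrow(\DTs(p_1,\dots,p_{n-1}),s)$, with $s=p_n-(n-1)$, produces exactly this update. Spreading at column $s$ merely stretches the existing tiles, so their exploration counts are unchanged, and it introduces the new chord after precisely $p_n$ steps: the order-$(n-1)$ lower path runs from column $-(n-1)$ to $n-1$, so the number of its steps left of column $s$ is $s+(n-1)=p_n$. I would then argue that the new chord's gray path meets none of the tiles, so its label is $0$, matching the nesting number of the new pair. Finally I would examine the broken strip of one-box tiles that $\sgrow$ adds to the right of the growth site and show that each of these tiles is encountered by the gray path of exactly one up step lying to the right of the new chord, so that each such up step's count rises by exactly $1$. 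The number of added tiles equals the number of up steps of the upper path to the right of $s$ (the count extracted in the proof of \tref{inv/art}); I would check that this coincides with the number of histoire up steps to the right of the new chord, which forces the attribution to be one tile per such up step. Combined with the new down step appearing at the right end, the tiling-side update agrees term by term with the matching-side update, and the induction closes.

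The main obstacle is the middle claim of the last step: pinning down Kim's gray-path exploration precisely enough to verify that the freshly added broken strip contributes $+1$ to each up step to its right and $0$ to the new chord. Because the exploration is described only pictorially (\fref{growth-hermite}), I expect the genuine work to be a careful local analysis of how the gray paths traverse the one-box tiles of the broken strip after spreading, confirming that each such tile is crossed exactly once and is charged to a distinct up step to the right of the growth site; the aggregate identity $\tiles(\DTs(p_1,\dots,p_n))=\operatorname{nestings}(\match(p_1,\dots,p_n))$ then serves as a reassuring consistency check on the total count, but the refinement to the per-up-step level is what the exploration analysis must actually supply.
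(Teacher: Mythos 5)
Your proposal is correct and follows essentially the same route as the paper: induction on $n$, with the key step being that $\sgrow$ inserts an up step at the special column, appends a down step at the end, and increments the histoire labels of the up steps to the right of the growth site, exactly mirroring how adjoining the pair $(p_n+1,2n)$ inserts a nesting-number-$0$ up step and increments the nesting numbers of pairs whose smaller element lies to the right of the insertion point. The one point you flag as needing care --- the per-up-step attribution of the broken-strip tiles to the gray paths --- is asserted at the same level of detail in the paper's own proof, which likewise leans on the (only pictorially defined) exploration process.
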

\begin{proof}
  The theorem is trivially true when $n=0$.  Suppose that it is true
  for $n$, and $T$ is a Dyck tiling of order $n$.  The $\sgrow$ map
  modifies the upper boundary $\mu$ of $T$ by inserting an up step at
  the special column, and appending a down step at the end.  When we
  update the perfect matching, a new smaller element is inserted at
  the location specified by $p_{n+1}$, which is matched with $2(n+1)$,
  so by induction, the upper boundary of the Dyck tiling is the shape
  of the perfect matching.  The $\sgrow$ map adds one-box tiles at
  each of the up steps to the right of the new special column, so the
  numbers in the associated \textit{histoire d'Hermite\/} after the
  special column are incremented.  In the perfect matching, after the
  new pair is added, the nesting numbers of each pair whose smaller
  element is to the right of the insertion point are incremented.
  This completes the induction.
\end{proof}

From this we see that the number of tiles in $\DTs(p_1,\dots,p_n)$
equals the nesting number of $\match(p_1,\dots,p_n)$, which in turn is
the number of inversions in $\matchmin(\match(p_1,\dots,p_n))$.

\begin{theorem}
  If $(\lambda,\sigma)$ is the labeled tree arising from $(p_1,\dots,p_n)$,
  then $$\DES(\sigma) = \DES(\matchmin(\match(p_1,\ldots,p_n))).$$
  In particular, $\dis(\DTr(p_1,\dots,p_n)) = \des(\matchmin(\match(p_1,\ldots,p_n)))$.
\end{theorem}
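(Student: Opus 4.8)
The plan is to show that both descent sets coincide with the single explicit set $\{\,i : p_{i+1}\le p_i\,\}$, by reducing each one to a comparison of two consecutive entries and then evaluating that comparison at the step where the two entries first coexist.

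First I would record two reformulations. Since $\sigma$ is the standardization of the left-endpoint sequence $\ell_1,\dots,\ell_n$ (indexed by the label order $L$), we have $\DES(\sigma)=\{\,i:\ell_i>\ell_{i+1}\,\}$. On the matching side, at each step the newly created pair receives the larger element $2i$, which is the current maximum; hence in $\match(p_1,\dots,p_n)$ the pairs listed in increasing order of their larger element are exactly the pairs in order of creation, so the $i$th letter of $\matchmin(\match(p_1,\dots,p_n))$ is the smaller element (the opener) of the pair created at step $i$, and $\DES(\matchmin)=\{\,i:\text{opener}_i>\text{opener}_{i+1}\,\}$. The key point is that in both reformulations the comparison at position $i$ is frozen once chord/pair $i+1$ is created: later spreads preserve the left-to-right order of chord endpoints, and later increments preserve the relative order of matching elements. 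I may therefore evaluate each comparison at the single step $k=i+1$.

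The heart of the argument is the chain of equivalences
\[
\ell_{k-1}>\ell_k \iff p_k\le p_{k-1} \iff \text{opener}_{k-1}>\text{opener}_k .
\]
For the matching, just before step $k$ the pair $k-1$ has opener $p_{k-1}+1$ (nothing happens between consecutive steps), while the new opener is $p_k+1$; the increment performed at step $k$ raises $p_{k-1}+1$ by one precisely when $p_{k-1}+1>p_k$, and a short case check then gives $\text{opener}_{k-1}>\text{opener}_k\iff p_k\le p_{k-1}$. For the chords I would use the identity $p_i=\#\{j<i:\ell_j<\ell_i\}+\#\{j<i:r_j<\ell_i\}$ from \sref{sec:intro}: it says that $\ell_i$ sits at endpoint-rank $p_i$ among the $2(i-1)$ endpoints present before step $i$. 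Because $r_{k-1}>\ell_{k-1}$, the endpoint $\ell_{k-1}$ still has exactly $p_{k-1}$ endpoints to its left among all $2(k-1)$ endpoints available at the start of step $k$, whereas $\ell_k$ is inserted with $p_k$ of those endpoints to its left; comparing ranks gives $\ell_k<\ell_{k-1}\iff p_k\le p_{k-1}$.

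Combining the two reformulations with this chain yields $\DES(\sigma)=\{\,i:p_{i+1}\le p_i\,\}=\DES(\matchmin(\match(p_1,\dots,p_n)))$, which is the first assertion; equivalently, this is an induction showing that appending the $n$th pair creates a new descent at position $n-1$ in both objects under the same condition $p_n\le p_{n-1}$. The ``in particular'' then follows by taking cardinalities: \tref{des/dis} gives $\dis(\DTr(p_1,\dots,p_n))=\des(\sigma)=|\DES(\sigma)|=|\DES(\matchmin)|=\des(\matchmin(\match(p_1,\dots,p_n)))$. I expect the main obstacle to be the chord computation, where one must correctly translate $p_k$ into an endpoint-rank and verify that the new left endpoint lands to the left of $\ell_{k-1}$ exactly when $p_k\le p_{k-1}$; the freezing principle is what makes this purely local, one-step comparison determine the global descent.
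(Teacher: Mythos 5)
Your proof is correct and follows essentially the same route as the paper's: both arguments identify each descent set with $\{\,i : p_{i+1}\le p_i\,\}$, on the matching side by tracking how the increment-and-adjoin step orders consecutive openers, and on the tree side by noting that $p_{i+1}\le p_i$ is exactly the condition for chord $i+1$ to be inserted to the left of chord $i$ (equivalently $\sigma_{i+1}<\sigma_i$). The paper states these two equivalences without elaboration, whereas you supply the rank/endpoint bookkeeping that justifies them; the substance is the same.
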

\begin{proof}
  Recalling the construction of the perfect matching, we see that $i$
  is a descent of the min-word when $p_{i+1}\leq p_i$.  In the
  construction of the increasing planted plane tree, this is
  precisely when the node labeled $i+1$ occurs to the left of the node
  labeled $i$ in the left-to-right depth-first search order, which
  occurs precisely when $\sigma_{i+1}<\sigma_i$.
\end{proof}

\section{Dyck tableaux}
\label{tiling/tableau}

\begin{figure}[b!]
\begin{center}
\tz{\includegraphics[scale=0.8]{chord-increasing.pdf}}{\includegraphics[scale=0.8]{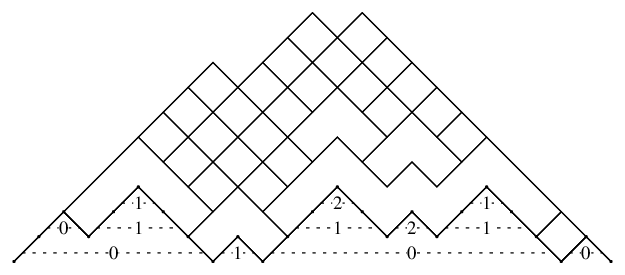}}
\hfill
\tz{\includegraphics{dyck-tiling-figure5.pdf}}{
\begin{tikzpicture}
\begin{scope}[shift={(0,0.4)},scale=0.8,font=\scriptsize] 
 \node (0) at (0,0)  {0};
 \node (1) at (-1.5,1) {0};
 \node (2) at (-0.5,1) {1};
 \node (3) at (0.5,1) {0};
 \node (4) at (1.5,1) {0};
 \node (5) at (-2.3,2) {0};
 \node (6) at (-1.4,2) {1};
 \node (7) at (-.4,2) {1};
 \node (8) at (0.5,2) {2};
 \node (9) at (1.4,2) {1};
 \node (10) at (-1.5,3) {1};
 \node (11) at (-.4,3) {2};
 \node (12) at (1.4,3) {1};
 \draw (0)--(1);
 \draw (0)--(2);
 \draw (0)--(3);
 \draw (0)--(4);
 \draw (0)circle(0.3);
 \draw (1)--(5);
 \draw (1)--(6);
 \draw (3)--(7);
 \draw (3)--(8);
 \draw (3)--(9);
 \draw (6)--(10);
 \draw (7)--(11);
 \draw (9)--(12);
\end{scope}
\end{tikzpicture}
}
\hfill
\tz{\includegraphics{dyck-tiling-figure6.pdf}}{
\begin{tikzpicture}
\begin{scope}[shift={(0,0.4)},scale=0.8,font=\scriptsize] 
 \node (0) at (0,0)  {0};
 \node (1) at (-1.5,1) {$\leq0$};
 \node (2) at (-0.5,1) {$\leq3$};
 \node (3) at (0.5,1) {$\leq1$};
 \node (4) at (1.5,1) {$\leq0$};
 \node (5) at (-2.3,2) {$\leq0$};
 \node (6) at (-1.4,2) {$\leq1$};
 \node (7) at (-.4,2) {$\leq3$};
 \node (8) at (0.5,2) {$\leq3$};
 \node (9) at (1.4,2) {$\leq1$};
 \node (10) at (-1.5,3) {$\leq1$};
 \node (11) at (-.4,3) {$\leq3$};
 \node (12) at (1.4,3) {$\leq1$};
 \draw (0)--(1);
 \draw (0)--(2);
 \draw (0)--(3);
 \draw (0)--(4);
 \draw (0)circle(0.3);
 \draw (1)--(5);
 \draw (1)--(6);
 \draw (3)--(7);
 \draw (3)--(8);
 \draw (3)--(9);
 \draw (6)--(10);
 \draw (7)--(11);
 \draw (9)--(12);
\end{scope}
\end{tikzpicture}
}
\vspace*{-12pt}
\end{center}
\caption{On the left is a cover-inclusive Dyck tiling of a certain skew shape $\lambda/\mu$, in which the chords of $\lambda$ have been labeled according to the number Dyck tiles above the chord which cover both endpoints of the chord.  Given $\lambda/\mu$ and these labels, the cover-inclusive Dyck tiling can be recovered.  These chord labels define a weakly increasing labeling of the tree poset associated with $\lambda$, as shown in the middle.  On the right is shown the maximum values of these labels for any cover-inclusive Dyck tiling of $\D(\lambda,\mu)$.}
\label{tree-poset}
\end{figure}

In this section we explain how the $\DTr$ bijection from Dyck tilings
to perfect matchings is related to the work by Aval, Boussicault, and
Dasse-Hartaut \cite{abdh} on what they call Dyck tableaux.  We make
use of a bijection from Dyck tilings of a skew shape $\lambda/\mu$
to bounded weakly increasing labelings of the planted plane tree
associated with $\lambda$, which is illustrated in \fref{tree-poset}.
This bijection appears in \cite[Prop.~1.11]{kw} in the case
$\lambda=\zigzag_n$, and in \cite[Sect.~4]{MR2927185} for general skew
shapes.  For the reader's convenience, we review the bijection in
\pref{prop:tilings-poset}.
\pagebreak

\begin{proposition}[\cite{MR2927185}] \label{prop:tilings-poset}
Let $\lambda/\mu$ be a skew shape.  For each chord $c$ of the lower
Dyck path~$\lambda$,
let $h_c$ denote the minimal thickness of the portion of the skew shape
$\lambda/\mu$ between the endpoints of the chord,
i.e., $h_c$ is the maximal number of Dyck tiles that can fit in the shape $\lambda/\mu$ and
cover chord $c$.  There is a bijection
between the cover-inclusive Dyck tilings $T$ of $\lambda/\mu$ and the
weakly increasing assignments of nonnegative integers to the poset of
chords $P_\lambda$, such that the number $g_c$ assigned to chord~$c$
satisfies $0\leq g_c\leq h_c$.  This bijection satisfies $\sum_c g_c =
(\area(T)-\tiles(T))/2$.
\end{proposition}
\begin{proof}
First observe that in any cover-inclusive Dyck tiling of skew shape
$\lambda/\mu$, every tile is shaped like the portion of the lower
boundary $\lambda$ directly beneath it.  We can assign to chord~$c$ a
number $g_c$, where $0\leq g_c \leq h_c$, which encodes the number of
tiles directly above $c$ in which the boxes in columns $\ell$ and $r$
are in the same tile, as shown in \fref{tree-poset}.  Since we are
tiling with Dyck tiles, if a chord $c'$ is above chord $c$, then
$g_{c'}\geq g_c$, so this labeling of the chord poset $P_\lambda$ is
weakly increasing.

The map from weakly increasing labelings of the chords in $P_\lambda$
to Dyck tilings is as follows.  Inductively we add Dyck tiles on top
of chords, starting from the lowest chords.  If $c_1$ covers $c_2$ in
the poset, then we add $g_{c_1}-g_{c_2}\geq 0$ new Dyck tiles whose
endpoints are exactly above the endpoints of $c_1$. By construction,
this ensures cover-inclusiveness since smaller tiles are added on top
of larger ones. By definition of $h_c$, all added tiles will fit in
$\lambda/\mu$.

It is straightforward to check that these maps are inverses.
\end{proof}

The tree structure leads to a recursive algorithm for enumerating the
number of bounded weakly increasing labelings of trees by the statistic $\sum_c g_c$,
which Lascoux and Sch\"utzenberger showed to be equivalent to
computing Kazhdan--Lusztig polynomials for pairs of Grassmannian
permutations \cite{MR646823}.
\old{
\pref{prop:tilings-poset} gives
a recursive algorithm for computing the number of Dyck tilings
of~$\lambda/\mu$.  Specifically, if we let $f_{c,v}(x)$ denote the
generating function
of bounded weakly increasing labelings of the subtree rooted at chord
$c$ in which chord $c$ is labeled $v$ or higher, enumerated according
to the sum of the labels in the subtree, then
$$
f_{c,v}(x) = \begin{cases} 0 & v>h_c \\ \displaystyle f_{c,v+1}(x) + x^v \prod_{\substack{c'\in P_\lambda\\c'>c\text{ in $P_\lambda$}}} f_{c',v}(x) & v\leq h_c. \end{cases}
$$
Then
$$ \sum_{\text{Dyck tilings $T\in\D(\lambda,\mu)$}} x^{(\area(T)-\tiles(T))/2} = f_{\text{root},0}(x).$$
}

Using the above bijection, we see that cover-inclusive Dyck
tilings are a natural generalization of the Dyck tableaux recently
introduced by Aval, Boussicault, and Dasse-Hartaut in \cite{abdh}.
They defined a \textbf{Dyck tableau} of order $n$ to be a skew shape
between the zig-zag path $\zigzag_{n+1}$ and upper Dyck path $\mu$
containing exactly $n$ dots, such that each column of the skew shape
going through the valleys of $\zigzag_{n+1}$ contain exactly one dot,
as shown in \fref{dyck-tableau}.

\begin{figure}[b]
\begin{center}
\tz{\includegraphics{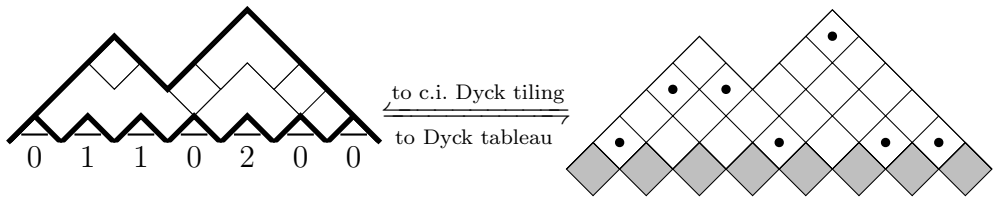}}{
\begin{tikzpicture}[rotate=45,scale=0.5]

\begin{scope}
\begin{scope}[shift={(-0.1,-0.1)}]
\draw[thick] (0.5,0)--(1,-0.5)  node[midway,shift={(0,-0.3)}] {0};
\draw[thick] (1.5,-1)--(2,-1.5)  node[midway,shift={(0,-0.3)}] {1};
\draw[thick] (2.5,-2)--(3,-2.5)  node[midway,shift={(0,-0.3)}] {1};
\draw[thick] (3.5,-3)--(4,-3.5)  node[midway,shift={(0,-0.3)}] {0};
\draw[thick] (4.5,-4)--(5,-4.5)  node[midway,shift={(0,-0.3)}] {2};
\draw[thick] (5.5,-5)--(6,-5.5)  node[midway,shift={(0,-0.3)}] {0};
\draw[thick] (6.5,-6)--(7,-6.5)  node[midway,shift={(0,-0.3)}] {0};
\end{scope}

\DyckPath[draw, line width =2pt]{4,-2,3,-5};
\DyckPath[clip] {4,-2,3,-5};
\draw (0,-7) grid (7,0);

\Ribbon[shift={(1,0)},fill=white]{2,-1,1,-2};
\Ribbon[shift={(4,-3)},fill=white]{2,-2};
\Ribbon[shift={(5,-2)},fill=white]{2,-2};
\DyckPath[draw, line width =2pt]{4,-2,3,-5};
\DyckPath[draw, line width =2pt]{1,-1,1,-1,1,-1,1,-1,1,-1,1,-1,1,-1};
\end{scope}

\begin{scope}[shift={(10,-11)}]

\draw[fill = lightgray] (0,0) rectangle +(1,-1);
\draw[fill = lightgray] (1,-1) rectangle +(1,-1);
\draw[fill = lightgray] (2,-2) rectangle +(1,-1);
\draw[fill = lightgray] (3,-3) rectangle +(1,-1);
\draw[fill = lightgray] (4,-4) rectangle +(1,-1);
\draw[fill = lightgray] (5,-5) rectangle +(1,-1);
\draw[fill = lightgray] (6,-6) rectangle +(1,-1);
\draw[fill = lightgray] (7,-7) rectangle +(1,-1);

\DyckPath[draw]{5,-2,3,-6};
\DyckPath[clip] {5,-2,3,-6};
\draw (0,-8) grid (8,0);

\DyckDot{1,0};
\DyckDot{3,0};
\DyckDot{4,-1};
\DyckDot{4,-3};
\DyckDot{7,-2};
\DyckDot{6,-5};
\DyckDot{7,-6};

\DyckPath[draw]{5,-2,3,-6};
\DyckPath[draw]{1,-1,1,-1,1,-1,1,-1,1,-1,1,-1,1,-1,1,-1};
\end{scope}
\node at (9.3,-8.3){$\xleftrightharpoons[\text{to Dyck tableau}]{\text{to c.i.\ Dyck tiling}}$};
\end{tikzpicture}
}
\end{center}
\caption{\label{dyck-tableau}
The bijection between cover-inclusive Dyck tilings with $\lambda=\zigzag_n$ and Dyck tableaux.  The dot heights encode the chord labels.}
\end{figure} 

\begin{proposition}
  \label{prop:tilings-tableaux}
  There is a bijection between the cover-inclusive Dyck tilings whose
  lower path is the zig-zag path $\zigzag_n=(\textrm{UD})^{n}$ of $n$
  up-down steps and Dyck tableaux of order $n$.
\end{proposition}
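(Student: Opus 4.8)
The plan is to deduce the result from \pref{prop:tilings-poset}, which already reduces cover-inclusive Dyck tilings of a skew shape $\lambda/\mu$ to bounded weakly increasing labelings of the chord poset $P_\lambda$. When $\lambda=\zigzag_n$ the chord poset $P_{\zigzag_n}$ is an antichain on its $n$ chords, since no peak of a zig-zag is nested inside another, so the weakly increasing condition of \pref{prop:tilings-poset} is vacuous. Consequently, ranging over all upper paths~$\mu$, a tiling in $\D(\zigzag_n,*)$ is nothing more than the data of an upper path $\mu$ together with an \emph{independent} choice, for each chord $c$, of an integer $g_c$ with $0\le g_c\le h_c$, where $h_c$ is the largest number of tiles that can sit above $c$ inside $\zigzag_n/\mu$ (see \fref{tree-poset}). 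The remaining task is to show that an Aval--Boussicault--Dasse-Hartaut Dyck tableau of order $n$ is exactly a diagrammatic encoding of this same data.

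Second, I would set up the explicit geometric dictionary indicated in \fref{dyck-tableau}. The $n$ chords of $\zigzag_n$ correspond, read from left to right, to the $n$ valley columns of $\zigzag_{n+1}$ that support the dots of a Dyck tableau; this is where the index shift from the order-$n$ lower path of the tiling to the order-$(n+1)$ lower path of the tableau enters. Given a tiling encoded as above by the pair $(\mu,(g_c)_c)$, I would place in the valley column of chord $c$ a single dot whose height records the integer $g_c$, with the convention that the admissible dot heights inside the skew shape correspond order-preservingly to the values $0,1,\dots,h_c$, and I would transport $\mu$ to the upper boundary $\mu'$ of the tableau under the same shift. One then verifies the Dyck-tableau axioms: the configuration has exactly $n$ dots, exactly one per valley column of $\zigzag_{n+1}$, and a well-defined Dyck upper boundary of order $n+1$.

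Third, I would produce the inverse map and thereby confirm the bijection. From a Dyck tableau, the skew shape recovers $\mu$ under the shift dictionary, and the height of the dot in each valley column recovers the corresponding label $g_c$; feeding $(\mu,(g_c)_c)$ back through the inverse of \pref{prop:tilings-poset} reconstructs the tiling. As a consistency check one may also observe that this map agrees with the composite of $\DTr(\zigzag_n,\cdot)^{-1}$, which is a bijection $\D(\zigzag_n,*)\to\L$ (here $\L$ is the set of all permutations on $[n]$) by \tref{extension/tiling}, with the ABDH bijection from permutations to Dyck tableaux, matching the assertion in \sref{tiling/tableau} that $\DTr$ restricts to the ABDH bijection.

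The step I expect to be the main obstacle is making the dictionary of the second paragraph precise, and in particular verifying that the number of admissible dot positions in the valley column of chord $c$ is \emph{exactly} $h_c+1$, so that the dot-placement constraints of a Dyck tableau translate into the bounds $0\le g_c\le h_c$ supplied by \pref{prop:tilings-poset} with no slack on either side. This forces careful bookkeeping of how the upper path $\mu$ simultaneously controls $h_c$ and the vertical room available to each dot, together with the order-$n$ versus order-$(n+1)$ shift between the two lower paths.
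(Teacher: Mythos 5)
Your proposal follows essentially the same route as the paper's proof: both invoke \pref{prop:tilings-poset}, observe that $P_{\zigzag_n}$ is an antichain so the weakly increasing condition is vacuous, and identify the dot height in each valley column of $\zigzag_{n+1}$ with the chord label $g_c$, with the key point being that the admissible dot heights run over exactly $0,\dots,h_c$. The step you flag as the main obstacle is precisely what the paper asserts (without further detail) as ``the maximum dot heights are precisely the maximum number of Dyck tiles that can fit within $\lambda/\mu$ and cover the chord,'' so your plan is a faithful, slightly more explicit version of the paper's argument.
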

\newpage
\begin{proof}
  Let the dot-height of a dot in $\zigzag_{n+1}/\mu$ be the number of
  boxes in the column of the dot, which are in $\zigzag_{n+1}/\mu$ and
  below the dot.  These dot heights are naturally associated with the
  chords of $\zigzag_n$, and are independent of one another.  Now
  $P_{\zigzag_n}$ is just the antichain on $n$ points, so the weakly
  increasing condition is vacuously true, and the maximum dot heights
  are precisely the maximum number of Dyck tiles that can fit within
  $\lambda/\mu$ and cover the chord associated with the dot's column
  (see \fref{dyck-tableau}).  Thus, Dyck tilings with lower path
  $\zigzag_n$ and Dyck tableaux of order $n$ are different
  representations of the same object.
\end{proof}

With this interpretation of Dyck tableaux as cover-inclusive Dyck
tilings in $\D(\zigzag_n,*)$, the bijection $\DTr(\zigzag_n,\cdot)$ is
equivalent to the bijection given in \cite{abdh}.

\section{Dyck tilings and 231-avoiding permutations}
\label{tiling/231}

We consider Dyck tilings whose lower path $\lambda$ is the zigzag
path, i.e., $\lambda=\zigzag_n = (\text{UD})^n$.  The poset $P_\lambda$ is just an
antichain of $n$ points, so that its linear extensions are exactly the
permutations on $n$ letters, i.e., $\LL(P_{\zigzag_n}) = S_n$, the symmetric
group on $n$ letters.
We consider $231$-avoiding permutations in the usual sense ---
namely, permutations $\sigma$, such that there are no indices $i<j<k$
for which $\sigma_k<\sigma_i<\sigma_j$.

\begin{theorem}\label{231avdthm}
  The maps $\DTs(\zigzag_n,\cdot)$ and $\DTr(\zigzag_n,\cdot)$ restrict to
  bijections between 231-avoiding permutations in $S_n$ and Dyck
  tilings whose lower path is $\zigzag_n$ and which contain only one-box tiles.
\end{theorem}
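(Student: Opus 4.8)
The plan is to deduce the theorem from the fact, established in \tref{extension/tiling}, that for $\lambda=\zigzag_n$ the poset $P_\lambda$ is an antichain, so $\L=S_n$ and the maps $\DTs(\zigzag_n,\cdot)$ and $\DTr(\zigzag_n,\cdot)$ are \emph{already} bijections from $S_n$ onto the set of all cover-inclusive Dyck tilings with lower path $\zigzag_n$. First I would pin down the target set: a cover-inclusive Dyck tiling that uses only one-box tiles is nothing but the unit-tile filling of a skew shape $\zigzag_n/\mu$, so it is determined by its upper path $\mu$; conversely every $\mu\subset\zigzag_n$ admits a unique (and automatically cover-inclusive) unit-tile filling. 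Hence the one-box tilings are in bijection with Dyck paths of order $n$, and there are exactly $C_n$ of them, matching the number of $231$-avoiding permutations in $S_n$. Because $\DTs(\zigzag_n,\cdot)$ and $\DTr(\zigzag_n,\cdot)$ are bijections onto \emph{all} such tilings, it suffices to show, for each of the two maps, that the preimage of the one-box tilings is exactly the set of $231$-avoiding permutations; the asserted bijections then follow by restriction.

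The geometric heart is to control when a tile larger than one box can appear during the growth process of \fref{bijectionexample}. Both grow maps only ever \emph{add} one-box tiles (the broken strip, respectively the ribbon, consists of one-box tiles), so the sole mechanism that can enlarge a tile is a later \emph{spread}. I would prove the local lemma that spreading a tiling at column $s$ enlarges precisely those tiles having a box centered in column $s$, every other tile being merely translated; this is a direct check from the coordinate definition of the spread, since a box centered at column $s$ has its two side corners split off to the left and right, while a box centered away from $s$ is rigidly shifted. Next, since $P_{\zigzag_n}$ is an antichain no chord is ever nested in another, so each intermediate lower boundary $\lambda^{(k)}$ is again a zig-zag and every growth column is a valley of the current lower path. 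Combining these two facts, the tiling built by $\DTs(\zigzag_n,\sigma)$ (and likewise $\DTr$) consists only of one-box tiles if and only if at every step the spread is performed at a valley over which the current upper path $\mu$ does \emph{not} rise, i.e.\ a ``touching'' valley where $\mu$ meets $\lambda$.

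It then remains to translate the touching-valley condition into a condition on $\sigma$. Here I would use the description recalled just before \tref{inv/art}, that inserting the new largest chord corresponds to inserting the new maximum into the preorder word $\sigma^{-1}$ at the position equal to the number of up steps of $\lambda$ to the left of the growth column; thus the choice of valley is exactly the choice of insertion position. Tracking how the set of touching valleys evolves under one grow step, I would show by induction that the positions at which the insertion keeps the tiling all-one-box are exactly the positions at which inserting the new maximum creates no new occurrence of the forbidden pattern: the new maximum can only play the role of the largest entry of a would-be $231$ pattern, so ``no new $231$'' is a straddling-inversion condition at the insertion point, and this is precisely what the touching-valley condition records. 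By induction on $n$ this identifies the preimage of the one-box tilings with the $231$-avoiding permutations, for $\DTs$ and, by the same argument, for $\DTr$ (the ribbon-grow map differs from strip-grow only in \emph{how} the newly added one-box tiles are arranged, not in \emph{when} a spread enlarges an older tile, so the touching-valley criterion is identical).

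I expect the main obstacle to be the inductive bookkeeping of the third paragraph: making precise how the collection of touching valleys is updated by a single strip-grow (respectively ribbon-grow) step --- new valleys are created around the inserted chord, while the heights over valleys to the right of the growth site are raised by the added tiles --- and verifying that this update matches, position by position, the recursion that generates the pattern-avoiding permutations by successive insertion of a new maximum. The geometric lemma of the second paragraph is routine once the coordinates are set up, and the reduction of the first paragraph is immediate; the real content lies entirely in showing that ``admissible insertion position'' and ``pattern-avoiding insertion position'' coincide.
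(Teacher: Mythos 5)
Your first two paragraphs are essentially right and are a legitimate way to make the paper's one-line assertion precise: the grow maps add only one-box tiles, a spread at column $s$ enlarges exactly those tiles whose coordinatizing path meets column $s$, every intermediate lower boundary is a zigzag, and hence a long tile is created precisely when some growth step spreads at a valley over which the current upper boundary has risen. The genuine gap is that the whole content of the theorem now lives in your third paragraph, which you yourself defer as ``the main obstacle'': you must actually prove that, along an all-one-box trajectory, valley $j$ of the current tiling is touching if and only if the current preorder word has no inversion straddling gap $j$, and you must do it separately for $\DTs$ and $\DTr$ --- the two maps produce \emph{different} intermediate upper boundaries (compare the $\DTs$ and $\DTr$ rows of \fref{objects}), so the set of touching valleys is not automatically the same for both, and ``the same argument'' for $\DTr$ is not free. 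The paper's own proof consists precisely of asserting this characterization (``places a tile above a position iff it is surrounded by a $2,1$''), so what you have written is a restatement of the problem in growth-process language rather than a proof.

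A second, sharper problem is that your translation step conflates $\sigma$ with its inverse. Your heuristic --- the new entry is the maximum, it can only be the largest letter of a new pattern, and ``no new pattern'' is a straddling-inversion condition at the insertion gap --- is the correct description for the \emph{preorder word} $\sigma^{-1}$, into which the new maximum is inserted at the gap determined by the growth column; pushed through, the induction identifies the one-box preimage as the set of $\sigma$ whose preorder word avoids $231$. In $\sigma$ itself the new entry is appended at the \emph{last position} with value $j+1$, can only serve as the ``$1$'' of a new $231$, and the resulting condition (the letters of value greater than $j$ form a decreasing subsequence) is not a straddling-inversion condition at the insertion point. Since $231$-avoidance is not preserved under inversion (it becomes $312$-avoidance), these two readings select genuinely different subsets of $S_n$; you can see them diverge already at $n=3$ by running the growth sequence $(p_1,p_2,p_3)=(0,0,2)$, whose preorder word is $231$ and whose $\sigma$ is $312$, and checking which of the six elements of $\D(\zigzag_3,*)$ it produces. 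Until you fix which permutation the pattern condition is imposed on, prove the touching-valley lemma for that choice, and reconcile the outcome with the statement of the theorem, the proposal does not establish the claimed result.
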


\begin{proof}
  The map $\DTs$ places a tile above a position iff there is an
  element to the left of the position that is larger than an element
  to the right of the position, i.e., if it is surrounded by a
  ``$2,1$.''  Thus, the image of $\DTs$ has no long tiles iff there
  are no $231$'s in the permutation.  The map $\DTr$ places a tile
  above a position iff there is an element to the left of the position
  which is one larger than an element to the right of the position.
  But this occurs precisely when there is an element to the left of
  the position that is larger than an element to the right of the
  position.  Thus the image of $\DTr$ also has no long tiles iff there
  are no $231$'s in the permutation.
\end{proof}

Reflecting $\mu$ about the $y$-axis and reversing the permutation
$\sigma$, i.e., $\sigma_n,\ldots,\sigma_1$, gives a bijection between
Dyck paths and $132$-avoiding permutations.  For the case $\DTr$, this
bijection has been given by Knuth in
\cite[Problem~2.2.1--4]{knuth}. 

\section{Dyck tilings and the mad statistic}
\label{tiling/mad}

In this section we relate Dyck tilings to the permutation
statistic $\mad$, which was defined by Clarke, Steingr{\'{\i}}msson,
and Zeng \cite{CSZ}, and whose definition we now review.  For a word
$w=w_1\cdots w_n$ of order $n$, the descent set of $w$ is denoted by
$$\DES(w)=\{i<n :  w_i>w_{i+1}\}.$$
The statistic $\mad$ of a permutation $\sigma$ is then defined
by
\begin{align*}
\desdif(\sigma) &= \sum_{i\in\DES(\sigma)} (\sigma_i-\sigma_{i+1}),\\
\res(\sigma) &= \sum_{i\in\DES(\sigma)} \#\{k<i: \sigma_i>\sigma_k>\sigma_{i+1}\},\\
\mad(\sigma) &= \desdif(\sigma)+\res(\sigma).
\end{align*}
This can also be written as
$$
\mad(\sigma) = \sum_{i\in\DES(\sigma)} \big[1+\#\{k>i+1: \sigma_i>\sigma_k>\sigma_{i+1}\}+2\times\#\{k<i: \sigma_i>\sigma_k>\sigma_{i+1}\}\big].
$$
Clarke \textit{et al.} gave a bijective proof that $\mad$ is a Mahonian statistic \cite{CSZ}, i.e., that it is equidistributed with $\inv$.

In this section we prove
\begin{theorem}
  \label{mad/art}
  For each permutation $\sigma$ of order $n$,
  $$ \art(\DTr(\zigzag_n,\sigma)) = \mad(\sigma).$$
\end{theorem}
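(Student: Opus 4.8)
The plan is to prove the identity by induction on $n$, comparing the effect of a single $\rgrow$ step on $\art$ with the effect of the corresponding operation on $\mad$. Since the chord poset of $\zigzag_n$ is an antichain, every intermediate lower path arising in the construction of $\DTr(\zigzag_n,\sigma)$ is again a zigzag, and each growth step spreads at a \emph{valley} column $s=2a-n$, where $a$ is the number of chords of the current lower path lying to the left of $s$. By the paragraph preceding \tref{inv/art}, passing from order $n$ to order $n+1$ inserts $n+1$ into the preorder word $\sigma^{-1}$ at the location $a$; dualizing, this says that $\sigma'\in S_{n+1}$ is obtained from $\sigma$ by incrementing every value exceeding $a$ and then appending the value $a+1$ at the end. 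So it suffices to show $\art(\rgrow(T,s))-\art(T)=\mad(\sigma')-\mad(\sigma)$, the base case $n=0$ being trivial.

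For the $\mad$ side I would compute the increment directly. Appending $a+1$ and shifting the values above $a$ preserves every existing descent and creates a new descent at position $n$ exactly when $\sigma_n>a$. Treating $\desdif$ and $\res$ separately: an existing descent $i$ gains $1$ in $\desdif$ iff it straddles level $a$ (that is, $\sigma_{i+1}\le a<\sigma_i$) while its $\res$-contribution is unchanged, and the new descent (present iff $\sigma_n>a$) contributes $\sigma_n-a$ to $\desdif$ and $\#\{k<n:\sigma_n>\sigma_k>a\}$ to $\res$. Collecting terms and using $(\sigma_n-a)+\#\{k<n:\sigma_n>\sigma_k>a\}=1+2N$, I get
$$\mad(\sigma')-\mad(\sigma)=D+[\sigma_n>a]\,(1+2N),$$
where $D=\#\{i\in\DES(\sigma):\sigma_{i+1}\le a<\sigma_i\}$ and $N=\#\{k<n:a<\sigma_k<\sigma_n\}$.

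For the $\art$ side I would factor $\rgrow(T,s)$ into spreading $T$ at $s$ to obtain $T''$ and then adjoining the ribbon. Spreading leaves $\tiles$ unchanged and increases the area by the gap $\mu_s-\lambda_s=\mu_s$ over the valley, so by the same area/tile bookkeeping used in the proof of \tref{inv/art} it raises $\art$ by $\mu_s/2$. The ribbon is a row of $Q-s-1$ one-box tiles, each contributing $1$ to $\art$, and is nonempty exactly when the special column $Q$ of $T''$ lies to the right of $s$, which by the analysis behind \tref{des/dis} occurs iff $\sigma_n>a$. Hence
$$\art(\rgrow(T,s))-\art(T)=\frac{\mu_s}{2}+(Q-s-1)\,[\sigma_n>a].$$

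The heart of the argument — and the step I expect to be the main obstacle — is to match these two displays term by term, via the geometric identities $\mu_s/2=D$ and, when $\sigma_n>a$, $Q-s-1=1+2N$. For the first I would write $\mu_s/2=\mathrm{ups}_\mu(s)-a$ as the excess of up-steps of the upper path over those of $\zigzag_n$ to the left of the valley, and show inductively that this excess counts exactly the descents of $\sigma$ straddling level $a$. For the second I would track the special column of the spread and argue that the ribbon length records the new chord (the ``$1$'') together with twice the number of earlier chords whose labels lie strictly between $a$ and $\sigma_n$ (the ``$2N$''). Pinning down the special column $Q$ and the upper-path height $\mu_s$ of $\DTr(\zigzag_n,\sigma)$ in terms of the descent and nesting data of $\sigma$ is where the real work lies; once both identities are established, the induction closes immediately.
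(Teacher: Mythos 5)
Your reduction is sound as far as it goes, and the route is genuinely different from the paper's. The paper first proves a lemma valid for an \emph{arbitrary} lower path $\lambda$, expressing $\area(T)$ and $\tiles(T)$ separately in terms of the words of left and right chord endpoints, converts these to the two-permutation statistics $\desdif(\sigma,\tau)$ and the sets $\REM_i(\sigma)$, $\REM_i(\tau)$ (Lemma~\ref{mad(s,t)/art}), and only then specializes to $\zigzag_n$, where $\sigma=\tau$. You instead induct directly on $n$ and compare increments of $\art$ and $\mad$ under a single $\rgrow$ step. The bookkeeping you do carry out is correct: the description of $\sigma'$ (increment values exceeding $a$, append $a+1$), the increment formula $\mad(\sigma')-\mad(\sigma)=D+[\sigma_n>a](1+2N)$, and the increment $\art(T')-\art(T)=\mu_s/2+(Q-s-1)[\text{ribbon}]$ all check out --- in particular the spread does add $\mu_s-\lambda_s$ boxes, since each tile having a box centered in column $s$ gains two boxes, in agreement with the accounting in the proof of Theorem~\ref{inv/art}.

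The gap is that the proof stops exactly where you say the real work lies, and that work is not routine. The identities $\mu_s/2=D$ and $Q-s-1=1+2N$ cannot be verified locally at the insertion column; to close the induction you must carry a strictly stronger invariant, namely (i) for \emph{every} valley $b\in\{0,\dots,n\}$ of $\zigzag_n$ one has $\mu_{2b-n}/2=\#\{i\in\DES(\sigma):\sigma_{i+1}\le b<\sigma_i\}$, and (ii) the special column of $\DTr(\zigzag_n,\sigma)$ is $2\sigma_n-(n+1)$. These are intertwined: updating (i) at valleys to the right of the insertion point requires knowing exactly how far the ribbon reaches, which is (ii); and verifying (ii) for $T'$ requires knowing which columns acquire one-box tiles on top. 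Both invariants are true (they are equivalent to your two identities --- e.g.\ (ii) gives $Q=2\sigma_n-n$, hence $Q-s-1=2(\sigma_n-a)-1=2N+1$, and the ribbon raises $\mu$ by $2$ precisely at the valleys $b'$ with $a<b'\le\sigma_n$, matching the new straddle counts), so the strategy is completable; but as written the central geometric lemma is asserted rather than proved, and the proposal is a correct plan rather than a proof.
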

\noindent
When we combine \tref{mad/art} with \tref{inv/art}, we obtain an
involution $$\DTs(\zigzag_n,\cdot)^{-1} \circ \DTr(\zigzag_n,\cdot)$$
on permutations of order $n$ which goes by way of Dyck tilings and
which maps $\mad$ to $\inv$.  This involution of course shows that
$\mad$ is equidistributed with $\inv$.  We do not see any connection
between this involution and the one given by Clarke \textit{et al.}

\begin{lemma}
  Suppose $\lambda$ is a Dyck tiling and $\sigma\in\L$.  In the
  increasing planted plane tree associated with $\lambda$ and
  $\sigma$, let $\ell_i$ and $r_i$ be the left and right endpoints of
  the chord of $P_\lambda$ labeled $i$, as shown in \fref{extension}.
  Letting $T=\DTr(\lambda,\sigma)$, we have
\begin{align}
\area(T) &=  \sum_{i\in\DES(\ell)} (\ell_i - r_{i+1}), \label{area}\\
\tiles(T) &=  \sum_{i\in\DES(\ell)} (\ell_i - r_{i+1} -
2\times \#\{j>i+1: r_{i+1}<\ell_j<\ell_i \}),  \label{tiles}
\end{align}
where $\ell$ denotes the word $\ell_1\cdots\ell_n$.
\end{lemma}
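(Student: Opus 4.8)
The plan is to induct on the order $n$ by peeling off the last ribbon-grow step, exactly in the style of the increment arguments already used for \tref{inv/art} and \tref{des/dis}. Before the induction I would record two facts. Since $\sigma$ is the standardization of $\ell_1\cdots\ell_n$, the two words have identical descent sets, so $\DES(\ell)=\DES(\sigma)$ and $|\DES(\ell)|=\des(\sigma)=\dis(T)$. Moreover, for a descent $i$ the chords labeled $i$ and $i+1$ cannot be nested: a nested pair would force the inner chord to carry the larger label and to have the larger left endpoint, contradicting $\ell_i>\ell_{i+1}$ together with the natural labeling. Hence these chords are disjoint with $r_{i+1}<\ell_i$, every summand $\ell_i-r_{i+1}$ is a positive integer, and I may read it as the width of the valley of $\lambda$ lying between chord $i+1$ and chord $i$.

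For the induction step, let $\hat\lambda,\hat\sigma,\hat T$ be the order-$(n-1)$ data obtained by deleting the chord labeled $n$, with endpoints $\hat\ell_i,\hat r_i$, so that $T=\rgrow(\hat T,s)$ with $s=p_n-(n-1)$. I would separate the two effects of a grow step. Spreading at column $s$ enlarges by two boxes each (odd-area) tile whose interior the column crosses, adding area but creating no new tiles; the ribbon-grow, when it fires, lays down a ribbon of one-box tiles, adding equally to area and to $\tiles$. Thus $\tiles(T)-\tiles(\hat T)$ is the length of the ribbon added, and $\area(T)-\area(\hat T)$ is that length plus the area gained by spreading.

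On the right-hand side I would track the insertion of chord $n$ at slot $p_n$, where it appears as a unit chord \op\cp. The descent set grows only by possibly adjoining $n-1$, and this happens precisely when chord $n$ lands to the left of chord $n-1$, which is exactly when the ribbon-grow fires; I would then identify the new summand $\ell_{n-1}-r_n$ with the ribbon length. For an \emph{old} descent $i\le n-2$, inserting \op\cp\ at slot $p_n$ shifts all final endpoints past the slot by $+2$, so $\ell_i-r_{i+1}$ grows by $2$ exactly when $p_n$ lies in the valley $[\hat r_{i+1},\hat\ell_i)$ and is unchanged otherwise. Summing, the old-descent part of the area grows by $2$ times the number of valleys lying over column $s$, which must match the spreading gain. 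The decisive point for the $\tiles$ formula is that the same insertion also increases the nesting count $\#\{j>i+1:\,r_{i+1}<\ell_j<\ell_i\}$ by one under the identical condition (here one uses $n>i+1$), so in $\tiles$ the $+2$ from the widened valley is cancelled by the $-2$ from the extra nesting; every old-descent $\tiles$-summand is therefore unchanged, in agreement with the fact that spreading creates no new tiles. Combining, $\tiles(T)-\tiles(\hat T)$ collapses to the new-descent term $\ell_{n-1}-r_n$ and $\area(T)-\area(\hat T)$ to that term plus the spreading gain, which is exactly what \eqref{area} and \eqref{tiles} demand.

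The main obstacle is the pair of geometric identifications on which the matching rests: that the ribbon produced by $\rgrow(\hat T,s)$ has length precisely $\ell_{n-1}-r_n$ (equivalently, that the special column is the bottom of the valley of the newly enclosing chord $n-1$), and that the area gained by spreading $\hat T$ at $s$ equals $2$ times the number of descent-valleys over $s$, i.e. that the number of tiles whose interiors the column crosses is the number of such valleys. Both are clean only if one carries a structural hypothesis through the induction, namely that the tiles of $\DTr(\lambda,\sigma)$ organize into one block above each descent valley, the block over valley $i$ having area $\ell_i-r_{i+1}$ and occupying the columns strictly between $r_{i+1}$ and $\ell_i$. I would therefore strengthen the induction hypothesis to include this block decomposition, check that it is preserved by spreading (which merely widens existing blocks, adding two boxes per straddled tile) and by ribbon-grow (which opens exactly one new block at the valley of the enclosing chord), and then the area and $\tiles$ increments fall out as computed above.
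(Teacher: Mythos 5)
Your proposal is correct and follows essentially the same route as the paper's proof: an induction along the ribbon-grow process in which the ribbon created at a descent has length equal to the gap $\ell_i-r_{i+1}$ at creation time, and each later chord inserted into that gap widens it by $2$, adds $2$ to the area of exactly one tile of that descent's block, adds nothing to the tile count, and is counted exactly once by $\#\{j>i+1: r_{i+1}<\ell_j<\ell_i\}$. The only difference is organizational --- you peel off the last growth step and carry an explicit block-decomposition hypothesis, whereas the paper tracks each descent's contribution forward through all steps and asserts the corresponding structural fact ("any such chord $j$ also increases by $2$ the area of one of the tiles produced by chord $i$") without isolating it; your version makes that implicit step explicit, which is a reasonable refinement rather than a new idea.
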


\begin{proof}
  We let $\ell_i^{(k)}$ and $r_i^{(k)}$ denote the left and right
  endpoints of the chord labeled $i$ after the $k$\th $\rgrow$ map.
  Recall that $p_1,\dots,p_n$ gives the sequence of growth locations,
  and that $s_i=p_i-(i-1)$.  We have $\ell_i^{(i)}=p_i+1$ and
  $r_i^{(i)}=p_i+2$.  Suppose $i>1$.  If $s_i\geq s_{i-1}$, then the
  $i$\th $\rgrow$ map adds no new one-box tiles,
  $\ell_i^{(i)}>\ell_{i-1}^{(i)}$, and so
  $\ell_i^{(n)}>\ell_{i-1}^{(n)}$, so $i-1\notin\DES(\ell^{(n)})$.  If
  on the other hand $s_i<s_{i-1}$, then the number of one-box tiles
  that the $i$\th $\rgrow$ map adds is $s_{i-1}-s_i=p_{i-1}-p_i-1=
  \ell_{i-1}^{(i)}-r_i^{(i)}$, and $i-1\in\DES(\ell^{(n)})$.  Later on
  in the growth process, new chords may get added between $r_i$ and
  $\ell_{i-1}$, which of course increases the difference between them.
  For $j>i$, this happens iff
  $r_i^{(j)} < \ell_j^{(j)} < r_j^{(j)} < \ell_{i-1}^{(j)}$, which happens iff
  $r_i^{(j)} < \ell_j^{(j)} < \ell_{i-1}^{(j)}$, which happens iff
  $r_i^{(n)} < \ell_j^{(n)} < \ell_{i-1}^{(n)}$, and when this happens
  the distance between $r_i$ and $\ell_{i-1}$ increases by $2$ (i.e.,
  $r_i^{(j)}-\ell_{i-1}^{(j)} = r_i^{(j-1)}-\ell_{i-1}^{(j-1)} + 2$)
  and otherwise increases by $0$.  This establishes the tiles formula
  \eqref{tiles}.  Notice that any such chord $j$ also increases by $2$
  the area of one of the tiles produced by chord $i$.  This
  establishes the area formula \eqref{area}.
\end{proof}

For a word $w$, the inversion set of $w$ is denoted by
$$\INV(w)=\{(i,j):i<j\text{ and }w_j<w_i\}.$$

\begin{lemma}\label{prop:prepost}
  For a Dyck path $\lambda$ and a natural labeling $L$ of $P_\lambda$,
  let $\ell_i$ and $r_i$ denote the left and right endpoints of the
  chord labeled $i$.  Then $\INV(\ell) \subset \INV(r)$.
  In particular, if $\sigma$ and $\tau$ are the standardizations of
  $\ell$ and $r$ respectively, then
  $\INV(\sigma)\subset \INV(\tau)$.
\end{lemma}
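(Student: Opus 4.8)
The plan is to reduce the statement to the single elementary fact that the chords of a Dyck path are pairwise noncrossing, and then to eliminate the one troublesome configuration using the order-preserving property of the natural labeling. The point to keep in mind throughout is that the conclusion $\INV(\ell)\subset\INV(r)$ is \emph{false} for an arbitrary labeling, so the hypothesis that $L$ is a natural labeling must be used at exactly one spot.

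First I would fix a pair of labels $i<j$ with $(i,j)\in\INV(\ell)$, that is, $\ell_j<\ell_i$, and aim to prove $r_j<r_i$, which is precisely $(i,j)\in\INV(r)$. Since the chord labeled $i$ and the chord labeled $j$ arise as matched pairs in the balanced-parenthesis word of $\lambda$, their intervals $[\ell_i,r_i]$ and $[\ell_j,r_j]$ cannot cross (a matched pair of parentheses is never interleaved with another); hence the two intervals are either disjoint or nested. I would split into these two cases.

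In the disjoint case, the hypothesis $\ell_j<\ell_i$ forces the chord labeled $j$ to lie entirely to the left of the chord labeled $i$, since the alternative $r_i<\ell_j$ would contradict $\ell_j<\ell_i<r_i$; thus $r_j<\ell_i<r_i$ and we are done. In the nested case, $\ell_j<\ell_i$ forces the chord labeled $i$ to be the inner one, so $\ell_j<\ell_i<r_i<r_j$. But then the span of the chord labeled $i$ lies within the span of the chord labeled $j$, so the chord labeled $i$ sits strictly above the chord labeled $j$ in $P_\lambda$; because $L$ is order-preserving this would give $i>j$, contradicting $i<j$. Hence the nested case cannot occur, and in the remaining (disjoint) case we have shown $r_j<r_i$. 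This establishes $\INV(\ell)\subset\INV(r)$, and the elimination of the nested configuration is exactly where the naturality of $L$ enters — that step is the crux of the argument, even though it is short.

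Finally, for the assertion about standardizations, I would observe that the entries of $\ell$ are the positions of distinct up steps of $\lambda$ and hence are pairwise distinct, and likewise for $r$. Consequently standardization preserves the relative order of entries exactly, so $(i,j)\in\INV(\sigma)$ iff $\ell_j<\ell_i$ iff $(i,j)\in\INV(\ell)$, giving $\INV(\sigma)=\INV(\ell)$ and, identically, $\INV(\tau)=\INV(r)$ as sets of index pairs. The desired inclusion $\INV(\sigma)\subset\INV(\tau)$ is then immediate from the inclusion just proved.
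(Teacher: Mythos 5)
Your proof is correct and is essentially the paper's own argument: both rest on the fact that two chords are either nested or disjoint (noncrossing), with naturality of $L$ ruling out the configuration $\ell_j<\ell_i<r_i<r_j$ for $i<j$. You merely organize the cases starting from an inversion of $\ell$ rather than from the poset relation between the two chords, and you spell out the (immediate) standardization step that the paper leaves implicit.
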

\begin{proof}
  Suppose $i<j$.  Let $x$ and $y$ denote the chords of $P_\lambda$
  which are labeled $i$ and~$j$ by $L$.  Since $L$ is natural, either
  $x<y$ in $P_\lambda$, or $x$ and $y$ are incomparable in
  $P_\lambda$.

  If $x<y$ in $P_\lambda$, then $\ell_i<\ell_j<r_j<r_i$.  Thus
  $(i,j)$ is not an inversion of $\ell$, but it is an inversion of
  $r$.

  If $x$ and $y$ are incomparable in $P_\lambda$, then either
  $\ell_i < r_i < \ell_j < r_j$, or else $\ell_j< r_j < \ell_i < r_i$.
  In this case, $(i,j)$ is either an inversion in both
  $\ell$ and~$r$, or an inversion in neither of them.
\end{proof}

If $\sigma$ and $\tau$ are two permutations on $[n]$ with
$\INV(\sigma)\subset\INV(\tau)$, then we define
\begin{align*}
\desdif(\sigma,\tau) &= \sum_{i\in\DES(\sigma)} (\sigma_i-\sigma_{i+1} + \tau_i-\tau_{i+1}).
\end{align*}
For a word $w=w_1\cdots w_n$ and a descent $i\in\DES(w)$, we define the set $\REM_i(w)$
of \textbf{right embraced numbers of $w$ with respect to descent $i$} by
\[
\REM_i(w)=\{k>i:w_i>w_k>w_{i+1}\}.
\]

\begin{lemma} \label{mad(s,t)/art}
  Let $\lambda$ be a Dyck path, $\sigma\in\L$, and
  $T=\DTr(\lambda,\sigma)$.  Let $\ell_i$ and $r_i$ denote the left
  and right endpoints of the chord labeled $i$ in the increasing planted plane
  tree associated with $\lambda,\sigma$.  Recall that
  $\sigma$ is the standardization of $\ell$, and let $\tau$ denote the
  standardization of~$r$.
  Then
\begin{align}
\area(T) &= \desdif(\sigma,\tau)-\des(\sigma) -
  \sum_{i\in\DES(\sigma)} |\REM_i(\sigma)\Delta\REM_i(\tau)|,  \label{area-st}\\
\tiles(T)&=\desdif(\sigma,\tau) - \des(\sigma) -\sum_{i\in\DES(\sigma)} (|\REM_i(\sigma)|+|\REM_i(\tau)| ),  \label{tiles-st}
\end{align}
where $A\Delta B = (A\cup B)\setminus(A\cap B)$.
\end{lemma}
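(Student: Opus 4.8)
The plan is to start from the preceding lemma, which already expresses $\area(T)$ and $\tiles(T)$ in terms of the endpoints $\ell$ and $r$, and to convert those endpoint-sums into the permutation quantities $\desdif(\sigma,\tau)$, $\des(\sigma)$, and the sets $\REM_i$. Because $\sigma$ and $\tau$ are the standardizations of $\ell$ and $r$, we have $\DES(\ell)=\DES(\sigma)$, so every sum involved runs over the same index set $\DES(\sigma)$, and it suffices to verify each of \eqref{area-st} and \eqref{tiles-st} termwise for a fixed $i\in\DES(\sigma)$.

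First I would record the local geometry at a descent $i$. Since $L$ is a natural labeling and distinct chords are either nested or disjoint, the four a priori configurations of chords $i$ and $i+1$ are eliminated as follows: chord $i$ nested in chord $i+1$ would force the label inequality $i+1\le i$; either ``$i+1$ nested in $i$'' or ``disjoint with $i$ to the left'' would force $\ell_{i+1}>\ell_i$, contradicting the descent $\ell_i>\ell_{i+1}$. Hence chords $i+1$ and $i$ are disjoint with chord $i+1$ entirely to the left, i.e.\ $\ell_{i+1}<r_{i+1}<\ell_i<r_i$; in particular $i\in\DES(\tau)$ as well. Counting lattice points between $r_{i+1}$ and $\ell_i$ then gives $\ell_i-r_{i+1}=M_\ell+M_r+1$, where $M_\ell$ and $M_r$ are the numbers of left and right endpoints strictly between $r_{i+1}$ and $\ell_i$, while comparing ranks against the standardizations gives $\sigma_i-\sigma_{i+1}=N_\ell+1$ and $\tau_i-\tau_{i+1}=N_r+1$, where $N_\ell=\#\{j:\ell_{i+1}<\ell_j<\ell_i\}$ and $N_r=\#\{j:r_{i+1}<r_j<r_i\}$. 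A short non-crossing argument then identifies the differences: $N_\ell-M_\ell$ counts the chords nested inside chord $i+1$, and $N_r-M_r$ counts the chords nested inside chord $i$.

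The crux is a classification of the remaining chords $k$ relative to chords $i$ and $i+1$. A chord $k$ with $k>i$ can neither cross nor contain chord $i$ or chord $i+1$ (containment of $i$ would force $k\le i$, and containment of $i+1$ would force $k\le i+1$, hence $k=i+1$, impossible), so $k$ lies entirely in one of the three open regions: inside $i+1$, strictly between $i+1$ and $i$, or inside $i$. Reading off endpoint conditions, $\REM_i(\sigma)=\{k>i:\ell_{i+1}<\ell_k<\ell_i\}$ is the set of chords lying inside $i+1$ or strictly between, whereas $\REM_i(\tau)=\{k>i:r_{i+1}<r_k<r_i\}$ is the set of chords lying strictly between or inside $i$. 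Consequently $\REM_i(\sigma)\cap\REM_i(\tau)$ is exactly the set of chords strictly between, which equals $\{j>i+1:r_{i+1}<\ell_j<\ell_i\}$, the quantity subtracted in the $\tiles$ expression of the previous lemma; and $\REM_i(\sigma)\,\Delta\,\REM_i(\tau)$ is the set of chords nested inside $i+1$ or inside $i$, so $|\REM_i(\sigma)\,\Delta\,\REM_i(\tau)|=(N_\ell-M_\ell)+(N_r-M_r)$.

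Assembling these gives the termwise area identity $\ell_i-r_{i+1}=(\sigma_i-\sigma_{i+1})+(\tau_i-\tau_{i+1})-1-|\REM_i(\sigma)\,\Delta\,\REM_i(\tau)|$; summing over $i\in\DES(\sigma)$ yields \eqref{area-st}. For \eqref{tiles-st} I would use that the count subtracted in the previous lemma is $b_i:=|\REM_i(\sigma)\cap\REM_i(\tau)|$, so that $\tiles(T)=\area(T)-2\sum_i b_i$, and then apply the set identity $|A\,\Delta\,B|+2|A\cap B|=|A|+|B|$ to rewrite $-\sum_i|\REM_i(\sigma)\,\Delta\,\REM_i(\tau)|-2\sum_i b_i$ as $-\sum_i(|\REM_i(\sigma)|+|\REM_i(\tau)|)$, giving \eqref{tiles-st}. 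I expect the main obstacle to be the region classification, and in particular verifying that the label restriction ``$k>i$'' encoded in the $\REM$ sets is automatically enforced by the nesting structure, so that the between-chords matched to $\REM_i(\sigma)\cap\REM_i(\tau)$ are precisely those with $j>i+1$ and nothing is miscounted; the remaining manipulations are routine bookkeeping with lattice positions and ranks.
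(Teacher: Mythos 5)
Your proposal is correct and follows essentially the same route as the paper's proof: both start from the endpoint formulas for $\area(T)$ and $\tiles(T)$ in the preceding lemma, work termwise over $\DES(\sigma)=\DES(\ell)$, classify the other chords relative to chords $i$ and $i+1$ using the noncrossing structure to convert $\ell_i-r_{i+1}$ into $\sigma_i-\sigma_{i+1}+\tau_i-\tau_{i+1}-1-|\REM_i(\sigma)\,\Delta\,\REM_i(\tau)|$, and identify $\#\{j>i+1: r_{i+1}<\ell_j<\ell_i\}$ with $|\REM_i(\sigma)\cap\REM_i(\tau)|$. Your bookkeeping via $M_\ell,M_r,N_\ell,N_r$ is just a reorganization of the paper's three-case count, and your worry about the ``$k>i$'' restriction is resolved exactly as you suspect: the between-chords with small labels cancel in the symmetric difference, and the nested-inside chords automatically have labels exceeding $i$.
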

\begin{proof}
  Suppose $i\in\DES(\ell)$, i.e., $\ell_{i+1}<\ell_i$.  Then in fact
  $\ell_{i+1}<r_{i+1}<\ell_i<r_i$.  To characterize $\ell_i-r_{i+1}$,
  consider any other chord, say with label $j$ ($j\neq i$ and $j\neq
  i+1$), which has at least one step between $r_{i+1}$ and $\ell_i$.
  There are three cases as follows:
\begin{enumerate}
\item[Case 1:] Both $\ell_j$ and $r_j$ are between $r_{i+1}$ and $\ell_i$, i.e.,
  $\ell_{i+1}<r_{i+1}<\ell_j<r_j<\ell_i<r_i$.
  Because the chords are noncrossing, this case happens iff both
  $\ell_{i+1}<\ell_j<\ell_{i}$ and $r_{i+1}<r_j<r_{i}$.
\item[Case 2:] Only $\ell_j$ is between $r_{i+1}$ and $\ell_i$, which
  happens iff $\ell_{i+1}<r_{i+1}<\ell_j<\ell_i<r_i<r_j$.
  It is easy to see that this case occurs iff
  $j<i$ and $\ell_{i+1}<\ell_j<\ell_i$ and $r_{i+1}<r_i<r_j$.
\item[Case 3:] Only $r_j$ is between $r_{i+1}$ and $\ell_i$.
  This case is similar to case 2, and occurs iff $j<i$ and
  $\ell_j<\ell_{i+1}<\ell_i$ and $r_{i+1}<r_j<r_i$.
\end{enumerate}

By considering chords $j$ that fall into one of these three cases, and
using the fact that $\sigma$ is the standardization of $\ell$ and $\tau$
is the standardization of $r$, we obtain
\begin{align}
\ell_i-r_{i+1} =&
1\begin{aligned}[t]+ &2\times\\+&\\+&\end{aligned}
\begin{aligned}[t]
&\#\{j: \sigma_{i+1}<\sigma_j<\sigma_i\text{ and } \tau_{i+1}<\tau_j<\tau_i\}\\
&\#\{j<i: \sigma_{i+1}<\sigma_j<\sigma_i\text{ and } \tau_{i+1}<\tau_i<\tau_j\}\\
&\#\{j<i: \sigma_j<\sigma_{i+1}<\sigma_i\text{ and } \tau_{i+1}<\tau_j<\tau_i\}
\end{aligned} \notag\\
=&
1\begin{aligned}[t]+ &\\-&\\-&\end{aligned}
\begin{aligned}[t]
&(\sigma_{i}-\sigma_{i+1}-1) + (\tau_{i}-\tau_{i+1}-1)\\
&\#\{j>i: \sigma_{i+1}<\sigma_j<\sigma_i\text{ and } \tau_{i+1}<\tau_i<\tau_j\}\\
&\#\{j>i: \sigma_j<\sigma_{i+1}<\sigma_i\text{ and } \tau_{i+1}<\tau_j<\tau_i\}
\end{aligned} \notag\\
=& \sigma_{i}-\sigma_{i+1}+\tau_{i}-\tau_{i+1}-1  -
|\REM_i(\sigma)\Delta\REM_i(\tau)|. \label{liri+1}
\end{align}
Upon summing over descents of $\sigma$ and using \eqref{area}, we obtain
\eqref{area-st}.

Recall equation~\eqref{tiles}.
For $j>i+1$, we have $r_{i+1}<\ell_j<\ell_i$ if and only if
$r_{i+1}<r_j<\ell_i$, which in turn occurs if and only if both
$\ell_{i+1}<\ell_j<\ell_i$ and $r_{i+1}<r_j<r_i$.
Thus $$\#\{j>i+1: r_{i+1}<\ell_j<\ell_i \} = |\REM_i(\sigma)\cap\REM_i(\tau)|,$$
and combining this with \eqref{tiles} and \eqref{liri+1} yields \eqref{tiles-st}.
\end{proof}

\begin{proof}[Proof of \tref{mad/art}]
  We use \lref{mad(s,t)/art}, and observe that
  when $\lambda=\zigzag_n$ we have $\sigma=\tau$, so when
  we evaluate $\art(T)=(\area(T)+\tiles(T))/2$, we obtain
  $$ \art(\DTr(\zigzag_n,\sigma)) = \desdif(\sigma,\sigma)-\des(\sigma)
  -\sum_{i\in\DES(\sigma)} |\REM_i(\sigma)|.$$
Now $\desdif(\sigma,\sigma)=2\times\desdif(\sigma)$, and it is easy to see that
\[
\res(\sigma)=\desdif(\sigma)-\des(\sigma)-
\sum_{i\in\DES(\sigma)}\#\{k>i: \sigma_{i+1} < \sigma_k < \sigma_i\},
\]
so we obtain
\[
 \art(\DTr(\zigzag_n,\sigma)) = \desdif(\sigma)+\res(\sigma) = \mad(\sigma). \qedhere
\]
\end{proof}

\phantomsection
\pdfbookmark[1]{References}{bib}
\bibliographystyle{hmralpha}
\bibliography{dyck-tiling}

\def\cprime{$'$}
\begin{thebibliography}{ABDH13}

\bibitem[ABDH13]{abdh}
Jean-Christophe Aval, Adrien Boussicault, and Sandrine Dasse-Hartaut.
\newblock Dyck tableaux.
\newblock {\em Theoret.\ Comput.\ Sci.}, 502:195--209, 2013.
\newblock \arXiv{1109.0370}. \MR{3101701}

\bibitem[ABN11]{abn}
Jean-Christophe Aval, Adrien Boussicault, and Philippe Nadeau.
\newblock Tree-like tableaux.
\newblock In {\em 23rd {I}nternational {C}onference on {F}ormal {P}ower
  {S}eries and {A}lgebraic {C}ombinatorics ({FPSAC} 2011)}, Discrete Math.\
  Theor.\ Comput.\ Sci.\ Proc., pages 63--74. 2011.
\newblock \arXiv{1109.0371}. \MR{2820698 (2012m:05059)}

\bibitem[BFS92]{BFS}
Fran{\c{c}}ois Bergeron, Philippe Flajolet, and Bruno Salvy.
\newblock \href{http://hal.inria.fr/docs/00/07/49/77/PDF/RR-1583.pdf}{Varieties
  of increasing}
  \href{http://hal.inria.fr/docs/00/07/49/77/PDF/RR-1583.pdf}{trees}.
\newblock In {\em C{AAP} '92 ({R}ennes, 1992)}, Lecture Notes in Comput.\
  Sci.~\#581, pages 24--48. Springer, 1992. \MR{1251994 (94j:68233)}

\bibitem[BW89]{BW}
Anders Bj{\"o}rner and Michelle~L. Wachs.
\newblock {$q$}-{H}ook length formulas for forests.
\newblock {\em J. Combin.\ Theory Ser.\ A}, 52(2):165--187, 1989. \MR{1022316
  (91e:05013)}

\bibitem[CSZ97]{CSZ}
Robert~J. Clarke, Einar Steingr{\'{\i}}msson, and Jiang Zeng.
\newblock \href{http://www.mat.univie.ac.at/~slc/s/s35clarke.pdf}{New
  {E}uler-{M}ahonian}
  \href{http://www.mat.univie.ac.at/~slc/s/s35clarke.pdf}{statistics on
  permutations and words}.
\newblock {\em Adv.\ in Appl.\ Math.}, 18(3):237--270, 1997. \MR{1436481
  (97m:05008)}

\bibitem[Fay13]{fayers}
Matthew Fayers.
\newblock Dyck tilings and the homogeneous {G}arnir relations for graded
  {S}pecht modules.
\newblock 2013.
\newblock \arXiv{1309.6467}.

\bibitem[FN12]{fisher-nadeau}
Ilse Fischer and Philippe Nadeau.
\newblock Fully packed loops in a triangle: matchings, paths and puzzles.
\newblock 2012.
\newblock \arXiv{1209.1262}.

\bibitem[Kim12]{kim}
Jang~Soo Kim.
\newblock Proofs of two conjectures of {K}enyon and {W}ilson on {D}yck tilings.
\newblock {\em J. Combin.\ Theory Ser.~A}, 119(8):1692--1710, 2012.
\newblock \arXiv{1108.5558}. \MR{2946383}

\bibitem[Knu73]{knuth3}
Donald~E. Knuth.
\newblock {\em The Art of Computer Programming, {V}ol.~3: Sorting and
  searching}.
\newblock Addison-Wesley Publishing Co., 1973. \MR{0445948 (56 \#4281)}

\bibitem[Knu75]{knuth}
Donald~E. Knuth.
\newblock {\em The Art of Computer Programming, Vol.~1: Fundamental
  algorithms}.
\newblock Addison-Wesley Publishing Co., second edition, 1975. \MR{0378456 (51
  \#14624)}

\bibitem[KW11a]{kw}
Richard~W. Kenyon and David~B. Wilson.
\newblock Double-dimer pairings and skew {Y}oung diagrams.
\newblock {\em Electron.\ J. Combin.}, 18(1):Paper 130, 22, 2011.
\newblock \arXiv{1007.2006}. \MR{2811099 (2012g:05268)}

\bibitem[KW11b]{kw:annular}
Richard~W. Kenyon and David~B. Wilson.
\newblock Spanning trees of graphs on surfaces and the intensity of loop-erased
  random walk on {$\mathbb Z^2$}.
\newblock 2011.
\newblock \arXiv{1107.3377}.

\bibitem[LS81]{MR646823}
Alain Lascoux and Marcel-Paul Sch{\"u}tzenberger.
\newblock Polyn\^omes de {K}azhdan \& {L}usztig pour les grassmanniennes.
\newblock In {\em Young tableaux and {S}chur functors in algebra and geometry
  ({T}oru\'n, 1980)}, volume 87--88 of {\em Ast\'erisque}, pages 249--266.
  Soc.\ Math.\ France, 1981. \MR{646823 (83i:14045)}

\bibitem[Sta72]{MR0332509}
Richard~P. Stanley.
\newblock {\em Ordered Structures and Partitions}.
\newblock Memoirs of the AMS \#119. American Mathematical Society, 1972.
  \MR{0332509 (48 \#10836)}

\bibitem[Sta99]{stanley}
Richard~P. Stanley.
\newblock {\em \href{http://www-math.mit.edu/~rstan/ec/}{Enumerative
  Combinatorics}, {V}ol.~2}.
\newblock Cambridge studies in advanced mathematics \#62. Cambridge University
  Press, 1999.
\newblock With a foreword by G.-C. Rota and an appendix by S. Fomin.
  \MR{1676282 (2000k:05026)}

\bibitem[SZJ12]{MR2927185}
Keiichi Shigechi and Paul Zinn-Justin.
\newblock Path representation of maximal parabolic {K}azhdan-{L}usztig
  polynomials.
\newblock {\em J. Pure Appl.\ Algebra}, 216(11):2533--2548, 2012.
\newblock \arXiv{1001.1080}. \MR{2927185}

\end{thebibliography}

\end{document}